\newtheorem{theorem}{Theorem}[section]
\newtheorem{corollary}[theorem]{Corollary}
\newtheorem{definition}[theorem]{Definition}
\newtheorem{lemma}[theorem]{Lemma}
\newtheorem{proposition}[theorem]{Proposition}
\newtheorem{remark}[theorem]{Remark}
\def\J#1#2#3{ \left\{ #1,#2,#3 \right\} }
\def\11{\mathbf{1}}
\begin{document}

\title{A projection--less approach to Rickart Jordan structures}
\author[J.J. Garc{\' e}s]{Jorge J. Garc{\' e}s}
\address[J.J. Garc{\' e}s]{Departamento de Matem{\'a}tica Aplicada, ETSIDI, Universidad
Polt{\' e}cnica de Madrid, Madrid}
\email{j.garces@upm.es}

\author[L. Li]{Lei Li}
\address[L. Li]{School of Mathematical Sciences and LPMC,
	Nankai University, Tianjin 300071,  China}
\email{leilee@nankai.edu.cn}

\author[A.M. Peralta]{Antonio M. Peralta}
\address[A.M. Peralta]{Departamento de An{\'a}lisis Matem{\'a}tico, Facultad de
	Ciencias, Universidad de Granada, 18071 Granada, Spain.}
\email{aperalta@ugr.es}

\author[H.M. Tahlawi]{Haifa M. Tahlawi}
\address[H.M. Tahlawi]{Department of Mathematics, College of Science, King Saud University, P.O.Box 2455-5, Riyadh-11451, Kingdom of Saudi Arabia.}
\email{htahlawi@ksu.edu.sa}

\dedicatory{To the memory of Professor C.M. Edwards with admiration, affection, and respect}

\subjclass[2010]{Primary 17C65; 16W10; 46L57 Secondary 46L05; 46L60.}

\keywords{Rickart C$^*$-algebra, JB$^*$-algebra and JB$^*$-triple, Baer C$^*$-algebra and JB$^*$-algebra, weakly order Rickart JB$^*$-triple, von Neumann regularity, inner ideal}

\date{}

\begin{abstract} The main goal of this paper is to introduce and explore an appropriate notion of weakly Rickart JB$^*$-triples. We introduce weakly order Rickart JB$^*$-triples, and we show that a C$^*$-algebra $A$ is a weakly (order) Rickart JB$^*$-triple precisely when it is a weakly Rickart C$^*$-algebra. We also prove that the Peirce-2 subspace associated with a tripotent in a weakly order Rickart JB$^*$-triple is a Rickart JB$^*$-algebra in the sense of Ayupov and Arzikulov. By extending a classical property of Rickart C$^*$-algebras, we prove that every weakly order Rickart JB$^*$-triple is generated by its tripotents. 
\end{abstract}

\maketitle
\thispagestyle{empty}


\section{Introduction and preliminaries}

The reference \cite{Rick46} is the founding work of the fruitful theory of Rickart and Baer C$^*$-algebras. C. E. Rickart \cite{Rick46} stated that \emph{``Our general purpose is to study the structure of a B$^*$-algebra in terms of its projections. Such a study of course demands the existence of many projections .... a B$^*$-algebra is defined to be a $B_p^*$ -algebra} (now called a \emph{Rickart C$^*$-algebra}) 
\emph{provided it contains, in a certain sense, ``sufficiently many'' projections.''} The chosen notion was built around left and right annihilators. For each nonempty subset $S$ of an associative ring $A$, the \emph{right-} and \emph{left-annihilator} of $S$ are defined by $$R(S)=\{ x\in A : sx = 0 \hbox{ for all } s\in S\}$$ and $$L(S)=\{x\in A : x s = 0 \hbox{ for all } s\in S\},$$ respectively. If $A$ is an associative $^*$-ring, a \emph{projection} $p$ in $A$ will be a self-adjoint ($p^* = p$) idempotent ($p^2 = p$). A \emph{Rickart $^*$-ring} is an associative $^*$-ring $A$ such that, for each $a\in A$, $R(\{a\}) = p A$ for a (unique) projection $p$ (see \cite[\S 3, Definition 2]{BerBaerStarRings}). In such a case we have $L(\{a\}) = \left(R(\{a^*\})\right)^* = \left( q A\right)^* = A q$ for a suitable projection $q$. A \emph{Rickart C$^*$-algebra} is a C$^*$-algebra which is also a Rickart $^*$-ring (cf. \cite[\S 3, Definition 3]{BerBaerStarRings} and the original work by Rickart \cite{Rick46}). Each Rickart $^*$-ring has a unity element and its involution is \emph{proper}, i.e., $x x^* =0 \Rightarrow x=0$ (see \cite[\S 3, Proposition 2]{BerBaerStarRings}). The projections of a Rickart C$^*$-algebra form a lattice which is not necessarily complete (cf. \cite[\S 3, Proposition 7 and Example 2]{BerBaerStarRings}). A C$^*$-algebra $A$ is called weakly Rickart if for each $x\in A$ there exists an \emph{annihilating right projection} (briefly, ARP) of $x$, that is, a projection $p$ satisfying $x p  = x,$ and $x y = 0$ implies $p y = 0$. Let us observe that annihilating left projections (ALP) are similarly defined. The ARP and ALP of each element $x$ are uniquely determined by $x$, and we shall denote them by $RP(x)$ and $LP(x)$, respectively. Every unital weakly Rickart C$^*$-algebra is a Rickart C$^*$-algebra, since for each $x\in A$ we have $R(\{x\}) = (\mathbf{1}-RP(x)) A.$ Rickart proved in \cite[Theorem 2.10]{Rick46} that every Rickart C$^*$-algebra is generated by its projections.\smallskip

As seen before, the definition of a Rickart $^*$-ring is given in terms of the annihilators of singletons. When singletons are replaced by general subsets we find the notion of Baer $^*$-ring. Concretely, a \emph{Baer $^*$-ring} is an associative $^*$-ring $A$ such that, for every nonempty
subset $S \subset A$ we have $R(S) = p A$ for a suitable projection $p$ in $A$ (see \cite[\S 4, Definition 1]{BerBaerStarRings}). Baer $^*$-rings are precisely those Rickart $^*$-rings whose projections form a complete lattice, equivalently, every orthogonal family of projections has a supremum (cf. \cite[\S 4, Proposition 1]{BerBaerStarRings}). As introduced in the pioneering works of Kaplansky \cite{Kap51, Kap52, Kap53}, an \emph{AW$^*$-algebra} is a C$^*$-algebra that is a Baer $^*$-ring (see \cite[\S 4, Definition 2]{BerBaerStarRings}).\smallskip

Since for each element $a$ in a C$^*$-algebra $A$ we have $R(\{a\}) =  R(\{a^* a\})$, in the definition of Rickart C$^*$-algebra we can restrict our attention to the right-annihilators of positive elements. Similarly in the definition of AW$^*$-algebras we can consider right-annihilators of sets of the form $\{ a^* a : a\in S\}$, where $S$ is any subset of the C$^*$-algebra under study.\smallskip

Each \emph{von Neumann algebra} (i.e., a $^*$-subalgebra of $B(H)$ whose bicommutant coincides with itself, or equivalently, by Sakai's theorem \cite{Sak56CharacteWStar}, a C$^*$-algebra which is also a dual Banach space) is an AW$^*$-algebra \cite[\S 4, Proposition 9]{BerBaerStarRings}. After  Sakai's theorem, von Neumann algebras are also known as W$^*$-algebras. Though AW$^*$-algebras were actually introduced with the aim of finding an algebraic characterization of von Neumann or W$^*$-algebras, it was soon shown by Dixmier that there exist commutative AW$^*$-algebras which cannot be represented as von Neumann algebras (see \cite{Dix51} or \cite[\S 7, Exercises 2, 3]{BerBaerStarRings}). Wright found in \cite{Wright76} examples of monotone complete factors which are not von Neumann algebras. The reader has probably realised that we take the references \cite{Rick46, Kap51, BerBaerStarRings, SaitoWrightBook} as the basic bibliography on Rickart and AW$^*$-algebras.\smallskip

In the list of problems and future directions in \cite[page 144]{RodPa91Overwolfach},  A. Rodr{\'i}guez-Palacios somehow anticipated and suggested the study of Rickart Jordan algebras as those Jordan algebras for which \emph{``the annihilator of every element in Zelmanov sense is generated by an idempotent''} (see subsection \ref{subsec: background} for the basic theory on Jordan algebras). We have to wait until 2016 to find the first study on Rickart and Baer Jordan algebras by Sh. A. Ayupov and F. N. Arzikulov (see \cite{AyuArzi2016Rickart}). The (\emph{outer}) \emph{quadratic annihilator} of a subset $\mathcal{S}$ in a Jordan algebra $M$ --with product $\circ$-- is defined as the set $$\hbox{Ann} (\mathcal{S}) = \mathcal{S}^{\perp_q} :=\{ a\in M : U_a (s) = 2 (a \circ s)
\circ a - (a\circ a) \circ s = 0, \ \ \ \forall s\in S \}.$$ A Jordan algebra $M$ is called a \emph{Rickart Jordan algebra} if for each element $a\in M^2$ there exists an idempotent $e \in M$ such that $\{a\}^{\perp_q}= U_e(M),$ where $U_e (x) := 2(e\circ x) \circ e - e^2\circ x$. If in the definition of Rickart Jordan algebra, the sets given by a single element $a\in M^2$ are replaced by arbitrary subsets $\mathcal{S}\subset M^2,$ we get the notion of \emph{Baer Jordan algebra} (cf. \cite{AyuArzi2016Rickart}).\smallskip

Rickart and Baer Jordan algebras are appropriate notions for JB-algebras, where we have projections and positive elements. It is shown by Ayupov and Arzikulov that for each C$^*$-algebra $A$, its self-adjoint part, $A_{sa},$ is a Rickart (respectively, Baer) Jordan algebra if and only if $A$ is a Rickart (respectively, Baer) C$^*$-algebra \cite{AyuArzi2016Rickart, AyuArzi2019Rickart}. A Rickart (respectively, Baer) JB$^*$-algebra is a JB$^*$-algebra $M$ whose self-adjoint part is a Rickart (respectively, Baer) JB-algebra.\smallskip

The original aim in Rickart's studies was completed in the case of JB-algebras by F.N. Arzikulov who proved that a  JB-algebra $N$ is a Baer Jordan algebra if and only if $N$ satisfies the following properties:
\begin{enumerate}[$(1)$]\item Every subset of pairwise orthogonal projections in a partially ordered set of projections has a least upper bound in this set;
\item  Every maximal strongly associative subalgebra of $N$ is generated by its projections (see \cite[Theorem 2.1]{Arzi98}).
\end{enumerate}

The available notions of Rickart and Baer Jordan algebras have a strong dependence on quadratic annihilators, projections and positive elements. However, if we are interested in developing these notions in more general Jordan structures, like JB$^*$-triples, where projections and positive elements do not make any sense, we need an alternative approach. This is the main goal of this paper.\smallskip

Section \ref{sec: Rickart Cstar-algebras} is devoted to revisit the main results on Rickart and weakly Rickart C$^*$-algebras with the aim of finding a characterization which can be stated without appealing to projections and positive elements. We shall show (see Propositions \ref{p sufficient and necessary conditions in terms of inner ideals without order} and \ref{p sufficient and necessary conditions in terms of inner ideals without order but local order}) that by mixing and extending a characterization due to G.K. Pedersen in \cite{PedSAW} with key contributions by P. Ara and D. Goldstein \cite{Ara89,AraGold1993,Gold95}, the following characterizations hold for every C$^*$-algebra $A$: \begin{enumerate}[$(a)$]\item $A$ is a weakly Rickart C$^*$-algebra if, and only if, any of the following statements holds:
\begin{enumerate}[$(1)$]
\item Given $x\in A$ and an inner ideal $J\subseteq A$ which is orthogonal to the inner ideal $I=A(x)$ of $A$ generated by $x$, there exists a partial isometry $e$ in $A$ such that $I\subseteq A_2(e)$ and $J\subseteq A_0(e)$.
\item Given $x\in A$ and an inner ideal $J\subseteq A$ with $I = A(x)\perp J $, there exists a partial isometry $e$ in $A$ such that $I\subseteq A_2(e),$ $e^*e =RP(x)$, $e e^* = LP(x),$ $x$ is a positive element in the C$^*$-algebra $(A_2(e), \bullet_e, *_e),$ $A(x)$ is a C$^*$-subalgebra of the latter C$^*$-algebra and $J\subseteq A_0(e)$.
\end{enumerate}
\item $A$ is a Rickart C$^*$-algebra if, and only if, $A$ is unital and for each $x\in A$ and each inner ideal $J\subseteq A$ which is orthogonal to $I=A(x)$, there exists a partial isometry $e$ in $A$ such that $I\subseteq A_2(e)$ and $J\subseteq A_0(e)$.
\end{enumerate} 

The advantage of the previous characterizations (especially the one in $(a) (1)$) relies on their independence of projections and positive elements, and can be therefore extended to wider settings. Before further extensions, in section \ref{subsec: algebraic Jordan Rickart and Baer} we explore the notions of weakly Rickart and SAJBW-algebras, both in terms of projections and positive elements. For example, a JB-algebra $N$ is called a {weakly Rickart JB-algebra} if for each element $a\in N^+$ there exists a projection $p \in N$ such that $p\circ a = a,$ and for each $z\in N$ with  $U_z(a) =0$ we have $p\circ z =0$. In Proposition \ref{p sufficient and necessary conditions in terms of inner ideals} we establish several characterizations of Baer or AJBW$^*$-algebras, (weakly) Rickart JB$^*$-algebras and SAJBW$^*$-algebras in terms of hereditary JB$^*$-subalgebras. After several technical conclusions in the line of classical results, we arrive to our main goal of section \ref{subsec: algebraic Jordan Rickart and Baer} in Theorem \ref{t weakly Rickart JBstar algebras contain an abundant set of projections}, where it is proved that every weakly Rickart JB$^*$-algebra is generated by its projections.\smallskip 

In section \ref{sec: JB*-triples} we introduce several definitions of Rickart, weakly Rickart and weakly order Rickart JB$^*$-triples. We show that, thanks to the characterization of the corresponding notions for C$^*$-algebras presented in section \ref{sec: Rickart Cstar-algebras}, the new definitions coincide with the classical notions in the setting of C$^*$-algebras. Special interest is received by weakly order Rickart JB$^*$-triples. This new notion agrees with the concept of Rickart C$^*$-algebra in the C$^*$- setting. A weakly order-Rickart JB$^*$-triple $E$ is a JB$^*$-triple satisfying that for each $x\in E,$ if we write $E(x)$ for the inner ideal of $E$ generated by $x$, then for each inner ideal $J\subseteq E$ with $I = E(x)\perp J $, there exists a tripotent $e$ in $E$ such that $x$ is positive in $E_2(e)$, and $J\subseteq E_0(e)$.\smallskip

We prove in Propostion \ref{p woR implies wR for each Peirce-2} that if $E$ is a weakly order Rickart JB$^*$-triple and $e\in E$ is a tripotent, then the Peirce-2 subspace $E_2(e)$ is a Rickart JB$^*$-algebra. This allows us to conclude that every weakly order Rickart JB$^*$-triple is generated by its tripotents (see Theorem \ref{t woR JBstriples are generated by its tripotents}).\smallskip

Finally, in section \ref{sec: von Neumann regularity} we explore the connections with von Neumann regularity, by showing that each inner ideal $I$ of a weakly order Rickart JB$^*$-triple $E$ contains a dense subset of von Neumann regular elements (cf. Theorem \ref {t inner ideals of woR JBstar triples contain a norm dense subset of regular elements}).

\subsection{Background and basic definitions}\label{subsec: background}

This subsection is aimed to provide a basic compendium on the Jordan structures studied in this note. The reader will find some brief historical introduction, definitions, notions and basic references. These contents are not really required to follow section \ref{sec: Rickart Cstar-algebras}, which has been written to be accessible with tools of C$^*$-algebras.\smallskip 

The early contributions by Jordan, von Neumann and Wigner in the decade of 1930s led to the idea of employing non-associative structures, specially Jordan algebras, in quantum mechanics (see the interesting monograph \cite{LiebRuHens} for a fantastic historical overview). A real or complex Jordan algebra is a non-necessarily associative algebra $M$ whose product (denoted by $\circ$) is commutative and satisfies the \emph{Jordan-identity}: \begin{equation}\label{eq Jordan identity algebras} (a \circ b)\circ a^2 = a\circ (b \circ a^2)\ \ (a,b\in M).
\end{equation} The Jordan algebra $M$ is called unital if there exists a unit element $\11$ in $M$ such that $\11\circ a = a$ for all $a\in M$. Jordan algebras are power associative, that is, a subalgebra generated by a single element is  
associative. In other words, for each $a\in M$ define $a^0:=\11$ if $M$ is unital, $a^1 = a$ and $a^{n+1} = 
a\circ  a^n$ ($n\geq 1$). Then $a^{n+m} = a^n \circ a^m$  for all natural numbers $m$ and $n$ \cite[Lemma 2.4.5]{HOS}. For each $a\in M$ we shall denote by $T_a$ the Jordan multiplication operator by the element $a$, that is, $T_a(x) = a\circ x$ ($x\in M$).\smallskip

An element $a$ in a unital Jordan Banach algebra $M$ is called \emph{invertible} whenever there exists $b\in M$ satisfying $a \circ b = \11$ and $a^2 \circ b = a.$ The element $b$ is unique and it will be denoted by $a^{-1}$ (cf. \cite[3.2.9]{HOS} and \cite[Definition 4.1.2]{Cabrera-Rodriguez-vol1}). We know from \cite[Theorem 4.1.3]{Cabrera-Rodriguez-vol1} that an element $a\in M$ is invertible if and only if $U_a$ is a bijective mapping, and in such a case $U_a^{-1} = U_{a^{-1}}$. \smallskip

As in the associative case, an \emph{involution} on a Jordan algebra $M$ is a mapping $a\mapsto a^*$ satisfying $(a^*)^* =a$ and $(a\circ b)^* = a^* \circ b^*$ for all $a,b\in M$. The involution $^*$ is called \emph{proper} if $a\circ a^* = 0 $ implies $a=0$. \smallskip

A very special source of examples is provided by associative algebras. Namely, suppose $A$ is a real or complex associative algebra with product denoted by juxtaposition. Then the natural Jordan product $a\circ b :=\frac12 ( a  b + b a)$ defines an structure of Jordan algebra on $A$; Jordan algebras of this type are called \emph{special}, as they are isomorphic to subalgebras of associative algebras equipped with a new multiplication (a term coined by Jordan, von Neumann \& Wigner \cite{JordanvonNeumannWigner34}). There are Jordan algebras which are not special (cf. \cite[Corollary 2.8.5]{HOS}), these algebras are called \emph{exceptional}.\smallskip

Suppose that $A$ is a C$^*$-algebra. The (associative) product of two self-adjoint elements in $A$ need not be, in general, self-adjoint. Another good property of the natural Jordan product assures that the Jordan product of two self-adjoint elements in $A$ also is in $A_{sa}$. Therefore, $A_{sa}$ is a real Jordan subalgebra of $A$, but not an associative subalgebra.\smallskip

A central notion in the study of Jordan algebras is the so-called $U$-mapping. Let $a,b$ be two elements in a Jordan algebra $M$. The $U_{a,b}$ mapping is the linear map on $M$ given by $$U_{a,b} (x) =(a\circ x) \circ b + (b\circ x)\circ a - (a\circ b)\circ x,$$ for all $x\in M$. The mapping $U_{a,a}$ is usually denoted by $U_a$. The $U$-maps satisfy the following fundamental identity: \begin{equation}\label{eq fundamental identity UaUbUa} U_a U_b U_a = U_{U_a(b)}, \hbox{ for all } a,b \hbox{ in a Jordan algebra } M \end{equation} (see \cite[2.4.18]{HOS}).\smallskip 

It is now the moment to introduce some analytic structures. A Jordan algebra $M$ endowed with a complete norm satisfying $\|a\circ b\| \leq \|a\| \|b\|,$ $a,b\in M$ is called a \emph{Jordan Banach algebra}. A \emph{JB-algebra} is a real Jordan Banach algebra $N$ whose norm  satisfies the following two geometric axioms:\begin{enumerate}[$(i)$]
	\item  $\|a^2\| = \|a\|^2$, for all $a\in N$;
	\item  $\|a^2\| \leq \| a^2+b^2\|$, for all $a,b\in N$ 
\end{enumerate} (see \cite[Definition 3.1.4]{HOS}).\smallskip

The Jordan mathematical model closest to C$^*$-algebras is given by JB$^*$-algebras. A \emph{JB$^*$-algebra} is a complex Jordan Banach algebra $M$ together with an algebra involution $a\mapsto a^*$, whose norm satisfies the following generalization of the Gelfand-Naimark axiom: $$\| U_a(a^*)\|=\|a\|^3, \hbox{ for every } a\in M.$$ 

Both of the just introduced Jordan structures are intrinsically related thanks to a result due to  J. D. M. Wright proving that every JB-algebra corresponds to the self-adjoint part of a (unique) JB$^*$-algebra  (see \cite{Wright77}).\smallskip 

If a C$^*$-algebra $A$ is equipped with its original norm and involution and the Jordan product given by $a\circ b=\frac{1}{2}(ab+ba)$, then the resulting structure is a JB$^*$-algebra. Jordan $^*$-subalgebras of C$^*$-algebras are called JC$^*$-algebras, and their symmetric parts are known as JC-algebras. The class of JB$^*$-algebras is strictly bigger than the collection of all associative C$^*$-algebras since, for example, the exceptional Jordan algebra $H_3(\mathbb{O})$ is a purely exceptional JB-algebra, that is, there is 
no nonzero homomorphism from $H_3(\mathbb{O})$ into a JC-algebra (cf. \cite[\S 7.2]{HOS}). \smallskip

From a purely algebraic point of view, a \emph{complex Jordan triple system} is a complex linear space $E$ equipped with a triple product $\{x,y,z\}$ which is bilinear and symmetric in $x,z$ and conjugate linear in the $y$ and satisfies the following ternary Jordan identity:
\begin{equation}\label{eq Jordan equation} L(x,y)\{a,b,c\}=\{L(x,y)a,b,c\}-\{a,L(y,x)b,c\}+\{a,b,L(x,y)c\},
\end{equation} for all $x,y,a,b,c\in E$, where $L(x,y):E\rightarrow E$ is the linear
mapping given by $L(x,y)z=\{x,y,z\}$.\smallskip

The analytic structures known as \emph{JB$^{*}$-triples}, whose origins go back the theory of holomorphic functions on infinite dimensional complex Banach spaces \cite{Ka}, are defined as those complex Jordan triple systems $E$ which are Banach spaces satisfying the next ``\emph{geometric}'' axioms:\begin{enumerate}[$(a)$]\item  For each $x\in E$, the operator $L(x,x)$ is hermitian with non-negative spectrum;
\item $\left\Vert \{x,x,x\}\right\Vert =\left\Vert x\right\Vert ^{3}$ for all $%
x\in E$.
\end{enumerate}\smallskip

The triple product of each JB$^*$-triple $E$ is a non-expansive mapping, that is, \begin{equation}\label{eq triple product is non-expansive} \|\{a,b,c\}\| \leq \|a\| \ \|b\|\ \|c\| \ \ \ (a,b,c\in E) \ \ \ \hbox{  \cite[Corollary 3]{FriRu86}.}\end{equation}

JBW$^*$-triples (respectively, JBW$^*$-algebras) are defined as those JB$^*$-triples (respectively, JB$^*$-algebras) which are also dual Banach spaces. The bidual of every JB$^*$-triple is a JBW$^*$-triple (see \cite{Di86}). It is further known that each JBW$^*$-triple admits a unique (isometric) predual and its product is separately weak$^*$ continuous \cite{BaTi} (see also \cite[Theorems 5.7.20 and 5.7.38]{Cabrera-Rodriguez-vol2}).\smallskip

Each C$^*$-algebra $A$ carries a natural structure of JB$^*$-triple with respect to the triple product given by \begin{equation}\label{eq ternary product on C*-algebras} \{a,b,c\} = \frac12 (a b^* c+ c b^* a).
\end{equation} The same triple product equips the space $B(H,K),$ of all bounded linear operators between two complex Hilbert spaces, with structure of JB$^*$-triple. In particular, we find infinite-dimensional complex Hilbert spaces which are JB$^*$-triples.\smallskip

For each element $a$ in a JB$^{*}$-triple $E$, the symbol $Q(a)$ will denote the conjugate linear operator on $E$ defined by $Q(a)(x)=\{a,x,a\}$. Every JB$^*$-algebra $M$ is a JB$^*$-triple with triple product \begin{equation}\label{eq triple prduct JBstar} \{a,b,c\} = (a\circ b^*) \circ c + (a\circ b^*) \circ c - (a\circ c) \circ b^*.
\end{equation} It follows that $Q(a) (x) = U_a (x^*)$ for all $a,x\in M$.\smallskip

We refer to \cite{HOS,Cabrera-Rodriguez-vol1} and \cite{Cabrera-Rodriguez-vol2} for the basic background on JB$^*$-triples and JB$^*$-algebras.\smallskip

An element $e\in E$ is called a \textit{tripotent} if $\{e,e,e\}=e$. When a C$^*$-algebra $A$ is regarded as a JB$^*$-triple with the triple product in \eqref{eq ternary product on C*-algebras}, it is known that the tripotents in $A$ are precisely the partial isometries in $A.$  In the same way that each partial isometry in a C$^*$-algebra $A$ induces a Peirce decomposition,  each tripotent $e$ in a JB$^*$-triple $E$ produces a \textit{ Peirce decomposition} of $E$ in the form $E=E_{2}(e)\oplus
E_{1}(e)\oplus E_{0}(e)$, where $E_{i}(e)$ is the $\frac{i}{2}$ eigenspace of the operator $L(e,e)$, $i=0,1,2$. This decomposition satisfies the following \emph{Peirce rules:} $$\{E_{2}(e),E_{0}(e),E\}=\{%
E_{0}(e),E_{2}(e),E\}=0$$ and $$\{E_{i}(e),E_{j}(e),E_{k}(e)\}\subseteq E_{i-j+k}(e),$$ when $i-j+k\in \{0,1,2\}$ and is zero otherwise. The \textit{Peirce} $k$\textit{-projection}, $P_{k}(e)$, is the natural projection of $E$ onto $E_{k}(e)$. Peirce projections are non-expansive (cf. \cite[Corollary 1.2]{FriRu85}) and they can be expressed in the following terms: $$P_{2}(e)=Q(e)^{2},\ P_{1}(e)=2(L(e,e)-Q(e)^{2}),$$ and  $$ P_{0}(e)=Id_{E}-2L(e,e)+Q(e)^{2} .$$

It is known that the Peirce-2 subspace $E_{2}(e)$ is a JB$^{*}$-algebra with unit $e$, Jordan product $x\circ _{e}y:=\{x,e,y\}$ and involution $x^{\ast _{e}}:=\{e,x,e\}$, respectively. It is worth to note that a linear bijection between JB$^*$-triples is an isometry if and only if it is a triple isomorphisms (cf. \cite[Proposition 5.5]{Ka}). Consequently, the triple product in $E_{2}(e)$ is uniquely given by $$\{x,y,z\}=(x\circ _{e}y^{\ast _{e}})\circ _{e}z+(z\circ _{e}y^{\ast
_{e}})\circ _{e}x-(x\circ _{e}z)\circ _{e}y^{\ast _{e}},$$ $x,y,z\in E_{2}(e)$.
\smallskip

A subspace $B$ of a JB$^*$-triple $E$ is a JB$^*$-subtriple of $E$ if $\{B,B,B\}\subseteq B$. A JB$^*$-subtriple $I$ of $E$ is called an \emph{inner ideal} of $E$ if $\{I,E,I\}\subseteq I$. A subspace $I$ of a C$^*$-algebra $A$ is an \emph{inner ideal} if $I A I \subseteq I$. Every hereditary $\sigma$-unital C$^*$-subalgebra of a C$^*$-algebra is an inner ideal. A complete study on inner ideals of JB$^*$-triples is available in \cite{EdRu92} and the references therein. It follows from Peirce rules that for each tripotent $e$ in a JB$^*$-triple $E$, the Peirce-2 subspace $E_2(e)$ is an inner ideal.\smallskip


Let $E$ be a JB$^*$-triple. The JB$^*$-subtriple, $E_a$, of $E$ generated by a single element $a$ is identified, via the Gelfand theory, with the commutative C$^*$-algebra $$C_0(\Omega_{a})=\{f: \Omega_a\to \mathbb{C} \hbox{ continuous with } f(0) =0 \hbox{  if  } 0\in \Omega_a\},$$  for a unique compact set $\Omega_{a}$ contained in $[0,\|a\|],$ such that $\|a\|\in \Omega_{a}$ and $0$ cannot be isolated in $\Omega_a$; and under this identification $a$ corresponds to the continuous function given by the embedding of $\Omega_{a}$ into $\mathbb{C}$ (cf. \cite[Corollary 1.15]{Ka} and \cite[Lemma 3.2]{Ka96}). A consequence of this representation affirms that every element in a JB$^*$-triple admits a cubic root and a $(2n-1)$th-root ($n\in \mathbb{N}$) belonging to the JB$^*$-subtriple that it generates. The sequence $(a^{[\frac{1}{2n-1}]})$ of all  $(2n-1)$th-roots of $a$ converges in the weak$^*$ 
(and also in the strong$^*$) topology of $E^{**}$ to a tripotent in $E^{**},$ denoted by $r_{{E^{**}}}(a)$, and called the \emph{range tripotent} of $a$. The tripotent $r_{{E^{**}}}(a)$
is the smallest tripotent $e\in E^{**}$ satisfying that $a$ is positive in the JBW$^*$-algebra $E^{**}_{2} (e)$. It is also known that, if $\|a\|= 1,$ the sequence $(a^{[2n -1]}),$ of all odd-powers of $a$, converges in the weak$^*$- and strong$^*$-topology of $E^{**}$ to a tripotent (called the
\hyphenation{support}\emph{support} \emph{tripotent} of $a$, $u(a)$ in $E^{**}$, which satisfies $ u(a) \leq a \leq r_{{E^{**}}}(a)$ in $E^{**}_2 (r_{{E^{**}}}(a))$ (compare \cite[Lemma 3.3]{EdRu88}; beware that in \cite{EdRu96}, $r(x)$ is called the support tripotent of
$x$). In case that $a$ is a positive element in a JB$^*$-algebra $M$, the support and the range tripotents of $a$ in $M^{**}$ are projections, called the \emph{support} and \emph{range projections} of $a$ in $M^{**}.$ \smallskip

For each element $a$ in a JB$^*$-triple $E$ (in which we generally do not have a cone of positive elements), the symbol $E(a)$ will stand for the norm-closure of $\J aEa = Q(a) (E)$ in $E$. It was proved by L.J. Bunce, C.-H. Chu and B. Zalar that $E(a)$ is precisely the norm-closed inner ideal of $E$ generated by
$a$. Clearly, $E_a\subset E(a)$.  It is further shown in the just quoted reference that $E(a)$ is a JB$^*$-subalgebra of the JBW$^*$-algebra $E(a)^{**} = \overline{E(a)}^{w^*} =
E^{**}_{2} (r_{{E^{**}}}(a))$ and contains $a$ as a positive element, where $r_{{E^{**}}}(a)$  is the range tripotent of $x$ in
$E^{**}$ (cf. \cite[Proposition 2.1]{BunChuZal2000}).\label{inner ideal}\smallskip

The reader will need some basic knowledge on the strong$^*$-topology of a JB$^*$-triple. If we are given a norm-one functional $\varphi$ in the predual, $W_*$, of a JBW$^*$-triple $W$, and a norm-one element $z$ in $W$ with $\varphi (z) =1$, the mapping 
$$(x,y)\mapsto \varphi\J xyz$$ defines a positive sesquilinear
form on $W$. Moreover, the mapping does not depend on the chosen $z,$ that is, if $w\in W$ satisfies $\varphi (w) =1$, we have $\varphi\J xyz = \varphi\J
xyw,$ for all $x,y\in W$  (see \cite[Proposition 1.2]{barton1987grothendieck}). The mapping $ x\mapsto \|x\|_{\varphi}:=
\left(\varphi\J xxz\right)^{\frac{1}{2}},$ defines a prehilbertian seminorm on $W$. The strong$^*$-topology (denoted by $S^*(W,W_*)$) is the topology on $W$ generated by the family of all semi-norms $\|\cdot\|_{\varphi}$ with $\varphi$ running in the unit sphere of the predual of $W$ (cf. \cite{BarFri90}). For the purposes of this note we recall that the triple product of every JBW$^*$-triple $W$ is jointly strong$^*$ continuous on bounded sets. The first proof of this result appeared in \cite{RodPa91}, however the difficulties affecting Grothendieck's inequalities in \cite{barton1987grothendieck} also impacted the original proof and an alternative argument can be found in \cite[Theorem 9]{PeRo2001}. The recent proof of the Barton-Friedman conjecture on Grothendieck's inequalities for JB$^*$-triples in \cite{HamKaPePfi-BFc} reinstates the validity of the original proof.\smallskip

The strong$^*$-topology of a JB$^*$-triple $E$ is defined as the restriction to $E$ of the strong$^*$-topology of its bidual. \smallskip

The notion of orthogonality for non-necessarily hermitian elements in a JB$^*$-algebra actually requires to identify JB$^*$-algebras inside the class of JB$^*$-triples. The general notion reads as follows: elements $a,b$ in a JB$^*$-triple $E$ are
said to be \emph{orthogonal} (written $a\perp b$) if $L(a,b) =0$. It is known that $a\perp b$ if and only if $b\perp a$ if and only if $E(a)\perp E(b)$ (see \cite[Lemma 1]{BurFerGarMarPe2008} for additional details).\smallskip

\section{An orderless approach to Rickart C*-algebras}\label{sec: Rickart Cstar-algebras} 

This section is devoted to explore some equivalent reformulations of the notions of (weakly) Rickart and Baer C$^*$-algebras in which we do not need the natural partial order nor the cone of positive elements. Our departure point is a result by G.K. Pedersen from \cite{PedSAW} where a reformulation in terms of hereditary subalgebras is established.\smallskip

We begin by recalling the definition of another class of C$^*$-algebras introduced by G.K. Pedersen in \cite{PedSAW}. A \emph{SAW$^*$-algebra} is a C$^*$-algebra $A$ satisfying that for any two orthogonal positive elements $x$ and $y$ in $A$ there is a positive element $e$ in $A$ (which is not assumed to be a projection) such that $e x = x$ and $ey = 0$. In the commutative setting these SAW$^*$-algebras correspond to C$^*$-algebras of the form $C_0(L)$ for some sub-Stonean (locally compact Hausdorff) space $L$. It should be remarked that sub-Stonean spaces, studied by K. Grove and G.K. Pedersen in \cite{GroPed84}, are defined as those  locally compact Hausdorff spaces in which disjoint $\sigma$-compact open subspaces have disjoint compact closures.  \smallskip

A C$^*$-subalgebra $B$ of a C$^*$-algebra $A$ is said to be a \emph{hereditary C$^*$-subalgebra} of $A$ if whenever $0 \leq a\leq  b$ with $a \in  A$ and $b \in B$, then $a\in B$, equivalently, $B^+$ is a face of $A^+$. It is known that an hereditary C$^*$-subalgebra $B$ of C$^*$-algebra $A$ is $\sigma$-unital if and only if it has the form $B= \overline{xAx}$ for some positive $x\in A$.\smallskip

\begin{proposition}\label{Prop Pedersen}\label{page Pedersen Characterization}\cite[Proposition 1]{PedSAW} Let $A$ be a C$^*$-algebra. Consider the following condition: Given two orthogonal, hereditary C$^*$-subalgebras $B$ and $C$ of $A$, there is an element $e$ in $A^+$ which is a unit for $B$ and annihilates $C$. Then the following statements hold: \begin{enumerate}\item[(AW$^*$)] If this condition holds for all pairs $B$, $C$, then $A$ is an AW$^*$-algebra; 
\item[(WRC$^*$)] If this condition holds when $B$ is $\sigma$-unital and $C$ is arbitrary, then $A$ is a weakly Rickart C$^*$-algebra;
\item[(SAW$^*$)] If this condition is true when both $B$ and $C$ are $\sigma$-unital, then $A$ is a SAW$^*$-algebra.
\end{enumerate}
\end{proposition}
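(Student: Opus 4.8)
The plan is to treat the three implications by one common scheme: from the hypothesis I extract, for a suitable pair of orthogonal hereditary subalgebras, a positive ``local unit/annihilator'' element $e$, and then convert $e$ into the structural data required (a genuine projection, resp. a principal annihilator). Throughout I would work with open and closed projections in $A^{**}$: for $a\in A^{+}$ write $r(a)\in A^{**}$ for the range projection of $a$ (an open projection) and $\overline{r(a)}$ for its closure, the smallest closed projection above $r(a)$. The two facts I would lean on are that the annihilator hereditary subalgebra $B^{\perp}$ of $B=\overline{aAa}$ has range projection $1-\overline{r(a)}$, and that $R(\{a\})=R(\{a^{*}a\})=\{y\in A:\,r(a)y=0\}=\{y\in A:\,\overline{r(a)}y=0\}$, which reduces all annihilator computations to positive elements and their range projections.

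The (SAW$^{*}$) case is immediate and needs no rounding. Given orthogonal positive $x,y$, I would take $B=\overline{xAx}$ and $C=\overline{yAy}$, both $\sigma$-unital and mutually orthogonal, apply the hypothesis to get $e\in A^{+}$ that is a unit for $B$ and annihilates $C$, and note that since $x\in B$ and $y\in C$ this $e$ satisfies $ex=x$ and $ey=0$; that is exactly the defining property of a SAW$^{*}$-algebra.

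For (WRC$^{*}$) the scheme is the same, but now the witness must be turned into a projection. Given $x\in A$, set $a=x^{*}x$, $B=\overline{aAa}$ ($\sigma$-unital), $C=B^{\perp}$ (arbitrary), and obtain from the hypothesis $e\in A^{+}$ that is a unit for $B$ and annihilates $C$; replacing $e$ by $\min(e,1)$ I may assume $0\le e\le 1$. The crux is the rounding step. Being a unit for $B$ forces $e\,r(a)=r(a)$, so $e$ is the identity on the range of $a$; annihilating $C=B^{\perp}$ forces $e\le\overline{r(a)}$. Then the spectral projection $\chi_{\{1\}}(e)$ is a \emph{closed} projection caught between $r(a)$ and $\overline{r(a)}$, so by minimality of the closure it equals $\overline{r(a)}$; since $\chi_{\{1\}}(e)=\overline{r(a)}$ says $e$ is the identity on the range of $\overline{r(a)}$ while $e\le\overline{r(a)}$ says $e$ is supported there, the two together squeeze $e$ to be the projection $\overline{r(a)}$ itself. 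Thus $p:=e=\overline{r(x^{*}x)}\in A$, and one verifies $xp=x$ and $xy=0\Rightarrow py=0$ (using the description $R(\{x\})=\{y:\overline{r(a)}y=0\}$), i.e. $p=RP(x)$, so $A$ is weakly Rickart. I expect this passage from the analytic witness $e$ to an honest projection to be the main obstacle: it is precisely where the open/closed projection dictionary carries the argument, and pinning down $\chi_{\{1\}}(e)=\overline{r(a)}$ together with the annihilator description of $B^{\perp}$ is the heart of the matter.

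Finally, for (AW$^{*}$) the hypothesis is available for all pairs, and I would first secure unitality by applying it to $B=A$, $C=\{0\}$: the resulting witness is a two-sided unit, so $A$ is unital. For an arbitrary subset $S$, reduce to $S\subseteq A^{+}$ via $R(S)=R(\{a^{*}a:a\in S\})$, let $B$ be the hereditary subalgebra generated by $S$ (with range projection $p_{B}=\sup_{a\in S}r(a)$) and put $C=B^{\perp}$. The identical rounding lemma gives $e=\overline{p_{B}}\in A$, a projection, whence $R(S)=\{y:p_{B}y=0\}=(1-\overline{p_{B}})A=(1-e)A$ is a principal right ideal generated by the projection $1-e$; this is exactly the Baer $^{*}$-ring condition, so $A$ is an AW$^{*}$-algebra. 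Here the only new difficulties beyond the weakly Rickart case are handling non-$\sigma$-unital hereditary subalgebras and replacing a single range projection by the supremum over $S$; the projection-extraction step itself is verbatim the same.
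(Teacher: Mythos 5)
Your argument is essentially correct, but note that the paper does not actually prove this statement: it is quoted from Pedersen \cite[Proposition 1]{PedSAW}, and the only related arguments in the text are Remark \ref{r Pedersen is a characterization} (which establishes the \emph{converse} implications) and the $(\Leftarrow)$ half of the proof of Proposition \ref{p sufficient and necessary conditions in terms of inner ideals without order}, which revisits the (WRC$^*$) direction in the partial-isometry formulation, where the witness is already a tripotent and no rounding to a projection is needed. So your write-up is a blind reconstruction of Pedersen's proof, and it works: the (SAW$^*$) case is indeed immediate; in the other two cases the choice $C=B^{\perp}$ (a hereditary C$^*$-subalgebra, being the two-sided annihilator of a self-adjoint set), the inequalities $r(a)\leq\chi_{\{1\}}(e)$ (from $ea=a$) and $e\leq\overline{r(a)}$ (from $eC=Ce=0$, since $1-\overline{r(a)}$ is the open projection supporting $B^{\perp}$), together with minimality of the closed projection $\overline{r(a)}$, do force $e=\chi_{\{1\}}(e)=\overline{r(a)}$, after which the verifications $xp=x$ and $xy=0\Rightarrow py=0$, respectively $R(S)=(\mathbf{1}-e)A$, go through as you indicate. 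Two remarks. First, the rounding step can be done with no open/closed projection theory at all: since $e$ is a unit for $B$, the self-adjoint element $e^{2}-e$ annihilates $B$ on both sides and hence lies in $C=B^{\perp}$, so $e(e^{2}-e)=0$, i.e.\ $e^{3}=e^{2}$, and functional calculus on the positive element $e$ gives that $e$ is a projection; this is precisely the device the paper deploys in the Jordan analogue (proof of Proposition \ref{p sufficient and necessary conditions in terms of inner ideals}$(b)$, where $e\circ(e^{n}-e)=0$ forces $e$ to be a projection), and it spares you the assertion that $\chi_{\{1\}}(e)$ is a closed (in fact compact) projection --- the one point in your argument a referee would ask you to source, and one that requires care in the non-unital setting. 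Second, a small notational slip: for non-positive $a$ the annihilator description should read $R(\{a\})=\{y\in A: r(a^{*}a)y=0\}$.
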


\begin{remark}\label{r Pedersen is a characterization}{\rm
It should be noted that the implications in (AW$^*$), (WRC$^*$) and (SAW$^*$) are actually equivalences and characterizations of AW$^*$-algebras, weakly Rickart C$^*$-algebra, and SAW$^*$-algebras. Namely, if $A$ is an AW$^*$-algebra and $B$ and $C$ are two orthogonal, hereditary C$^*$-subalgebras of $A$, by the hypothesis on $A$, there exists a projection $p$ in $A$ such that $R(C) = p A.$ Clearly, $(\mathbf{1}-p) c= c,$ for all $c\in C,$ and since $B$ and $C$ are orthogonal, $B\subset R(C) = p A$. Since $B$ and $C$ are self-adjoint, $p$ is a unit for $B$ and annihilates $C$. If $A$ is a weakly Rickart C$^*$-algebra, $B$ is the closure of $x A x$ for some positive $x$, and $C$ is arbitrary, by the assumptions on $A$, there exists a projection $p$ in $A$ such that $x p = x$ and $x y =0$ implies $p y =0$. Therefore $p$ is a unit for $B$ and annihilates $C$. The remaining equivalence can be similarly obtained.\smallskip

Although it is not explicit in \cite[Proposition 1]{PedSAW}, the following equivalence also holds by the same arguments:
\begin{enumerate} \label{equivalence RCstar}
\item[(RC$^*$)] The condition in Proposition \ref{Prop Pedersen} holds when $C$ is $\sigma$-unital and $B$ is arbitrary if, and only if, $A$ is a Rickart C$^*$-algebra.
\end{enumerate}
}\end{remark}

Let us briefly recall some basic facts on range projections. Suppose $a$ is a positive element in a von Neumann algebra $W$. The \emph{range projection} of $a$ in $W$ (denoted by $rp(a)$) is the smallest projection $p$ in $W$ satisfying $a p = a$. It is known that the sequence $\left( (1/n \mathbf{1}+a)^{-1} a \right)_n$ is monotone increasing to $rp(a)$, and hence it converges to $rp(a)$ in the weak$^*$-topology of $W$. If $a$ is in the closed unit ball of $A$, the sequence $(a^{\frac1n})_n$ is monotone increasing and converges to $rp(a)$ in the weak$^*$-topology of $A$. Actually, for any element  $x$ in $W$, the smallest projection $l$ in $W$ with $l x = x$ is called the \emph{left range projection} of $x$ and denoted by $s_l(x)$. The \emph{right range projection} $s_r(x)$ is the smallest projection $q$ in $W$ with $x q = x$ (cf. \cite[Definition 1.4]{Tak} or \cite[2.2.7]{Ped}).  It is known that $r(x^* x ) = s_r(x)$ and $r(x x^*) = s_l(x)$, while $r(x x^*) = s_l(x)= s_r(x)$ for any self-adjoint $x$. If $x$ is an element in a C$^*$-algebra $A$, we shall usually employ the left and right range projections of $x$ in $A^{**}$. If $A$ is a Rickart C$^*$-algebra, for each $x\in A$ we have $s_r(x)\leq RP(x)$ and $s_l(x)\leq LP(x)$ in $A^{**}$.\smallskip

An element $e$ in a C$^*$-algebra $A$ is a partial isometry if $ee^*$ (equivalently, $e^*e$) is a projection in $A$. Each partial isometry $e\in A$ induces a Peirce decomposition of $A$ in the form $A = A_2(e)\oplus A_1(e) \oplus A_0(e)$, where $A_2 (e) = ee^* A e^*e $, $A_1 (e) = (\mathbf{1}-ee^*) A e^*e \oplus ee^* A (\mathbf{1}-e^*e),$ and $A_0(e) =  (\mathbf{1}-ee^*) A (\mathbf{1}-e^* e).$ The subspace $A_j(e)$ is called the Peirce-$j$ subspace. The Peirce-$2$ subspace $A_2(e)$ is a unital C$^*$-algebra, with unit $e,$ when equipped with the original norm, product $a\bullet_e b = a e^* b$ and involution $a^{*_e} = e a^* e$ ($a,b\in A$). \smallskip

A couple of projections $p,q$ in a C$^*$-algebra $A$ are said to be  (Murray-von Neumann) \emph{equivalent}, $p\sim q$, if $p = ee^*$ and $q= e^*e$ for some partial isometry $e \in A$. A unital C$^*$-algebra $A$ is finite if $p\sim \11$ implies $p = \11.$\smallskip

In our seeking of an order-free characterization of (weakly) Rickart C$^*$-algebras, which can be employed to define an appropriate notion in general JB$^*$-triples, we shall need the following milestone result due to P. Ara: \emph{``Left and right projections are (Murray-von Neumann) equivalent in Rickart C$^*$-algebras''} (see \cite{Ara89} where this famous conjecture by I. Kaplansky was proved). The same conclusion actually holds for weakly Rickart C$^*$-algebras. The result is included here for the lacking of an explicit reference. 

\begin{lemma}\label{l Ara's theorem for weakly Rickark} Let $A$ be a weakly Rickart C$^*$-algebra. Then the left and right projections of every element in $A$ are Murray-von Neumann equivalent.
\end{lemma}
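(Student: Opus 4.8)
The plan is to deduce the statement from the theorem of Ara \cite{Ara89} for \emph{unital} Rickart C$^*$-algebras, by trapping $x$ inside a corner of $A$ that is forced to be unital and to inherit the weakly Rickart property. Fix $x\in A$. Since the only tool at hand producing a genuine projection of $A$ is the annihilating right projection, the first task is to manufacture a single projection of $A$ that serves as a two-sided unit for $x$.

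To this end I would set $a:=xx^{*}+x^{*}x\in A^{+}$ and put $f:=RP(a)$, which exists because $A$ is weakly Rickart; as $a$ is self-adjoint one has $af=a=fa$. Working inside the von Neumann algebra $A^{**}$, whose unit I denote by $\mathbf{1}$, the C$^*$-identity yields
\[
\|x-fx\|^{2}=\|(\mathbf{1}-f)\,xx^{*}\,(\mathbf{1}-f)\|\le \|(\mathbf{1}-f)\,a\,(\mathbf{1}-f)\|=0,
\]
because $xx^{*}\le a$ and $(\mathbf{1}-f)a(\mathbf{1}-f)=a-fa-af+faf=0$; the symmetric computation with $x^{*}x\le a$ gives $\|x-xf\|=0$. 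Hence $fxf=x$, so $x$ lies in the corner $C:=fAf$, a C$^*$-algebra with unit $f$.

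The next step is to verify that $C$ is a Rickart C$^*$-algebra and that the annihilating projections of $x$ are undisturbed by the passage to $C$. I would first record the standard fact that $RP(y)$ is the smallest projection $p\in A$ with $yp=y$: if $q\in A$ is any projection with $yq=y$, then $w:=p-qp\in A$ satisfies $yw=yp-(yq)p=y-yp=0$, so the annihilation property of the ARP gives $pw=0$, i.e. $pqp=p$, whence $(p-qp)^{*}(p-qp)=p-pqp=0$ and $qp=p$, that is $p\le q$. Applying this with $y\in C$, for which $yf=y$, forces $RP(y)\le f$ and hence $RP(y)=fRP(y)f\in C$; moreover $RP(y)$ still satisfies both defining conditions of an ARP relative to $C$, since $yz=0$ with $z\in C\subseteq A$ already implies $RP(y)z=0$ in $A$. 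Thus every element of $C$ has an ARP inside $C$, so $C$ is weakly Rickart, and being unital it is a Rickart C$^*$-algebra by the remark recalled in the Introduction. The same discussion applied to $x$ and to $x^{*}$ (recall $LP(x)=RP(x^{*})$, and $C$ is $^*$-closed) shows that $RP(x)$ and $LP(x)$ lie in $C$ and coincide, by uniqueness, with the right and left annihilating projections of $x$ computed inside $C$.

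Finally I would invoke Ara's theorem \cite{Ara89} in the unital Rickart C$^*$-algebra $C$: the left and right projections of $x$ are Murray--von Neumann equivalent in $C$, so there is a partial isometry $e\in C$ with $e^{*}e=RP(x)$ and $ee^{*}=LP(x)$. Since $C\subseteq A$, this $e$ exhibits $RP(x)\sim LP(x)$ in $A$, as required. The one genuinely delicate point is the reduction to the corner: one must be sure that $C=fAf$ inherits the weakly Rickart property and that no annihilating projection is lost or gained, which is exactly what the minimality characterisation of $RP(\cdot)$ secures. Once $C$ is known to be unital and Rickart, the hard analytic content is entirely borrowed from Ara's solution of Kaplansky's conjecture.
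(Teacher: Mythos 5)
Your proof is correct, and it takes a genuinely different route from the paper's. The paper goes \emph{up}: it invokes \cite[Theorem 5.1]{BerBaerStarRings} to produce a unitization $A_{\mathbf{1}}$ that is a Rickart C$^*$-algebra, checks via \cite[Lemma 5.3]{BerBaerStarRings} that the annihilating projections of $x$ are unchanged, applies Ara's theorem there, and then has to argue separately that the resulting partial isometry $e=e_1+\lambda\mathbf{1}$ has $\lambda=0$ (by inspecting $ee^*=LP(x)\in A$) so that $e$ actually lies in $A$. You go \emph{down}: you manufacture the projection $f=RP(xx^*+x^*x)$, trap $x$ in the unital corner $fAf$, and verify by hand that this corner is weakly Rickart with unchanged annihilating projections --- your minimality characterization of $RP(\cdot)$ is exactly the right tool for this, and the whole step is in substance \cite[Proposition 5.6]{BerBaerStarRings}, which the paper itself uses elsewhere (Remark \ref{r wR Cstar algebras are generated by their projections}) but which you reprove from scratch. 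Your approach buys two things: it avoids the two unitization lemmas entirely, and the partial isometry produced by Ara's theorem lies in $fAf\subseteq A$ automatically, so no final ``no scalar part'' argument is needed; the small price is the preliminary C$^*$-identity computation showing $fxf=x$ and the verification that corners inherit the weakly Rickart property. Both proofs borrow all the hard analytic content from Ara's solution of Kaplansky's conjecture, exactly as you say.
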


\begin{proof} Let $x$ be an element in a weakly Rickart C$^*$-algebra $A$. If $A$ is unital, the conclusion follows from Ara's theorem \cite[Theorem 2.5]{Ara89}. So, we shall assume that $A$ is non-unital.\smallskip

By \cite[Theorem 5.1]{BerBaerStarRings} (see also \cite[Lemma 3.6]{SaitoWright2015}) we can find a unitization $A_{\mathbf{1}}= A\oplus \mathbb{C} \mathbf{1}$ of $A$ which is a Rickart C$^*$-algebra. Fix $x\in A$. Let $RP (x)$ and $LP (x)$ denote the right and left projections of $x$ (in $A$ or in $A_{\mathbf{1}}$).  Let us observe that these symbols offer no ambiguity. More concretely, if $e= LP_A (x) $ is the ALP of $x$ in $A$, Lemma 5.3 in \cite{BerBaerStarRings} assures that $e$ is the ALP of $x\in A$ in $A_{\mathbf{1}}$, that is, $LP_{A}(x)= LP_{A_{\mathbf{1}}}(x)$.  Similarly, $RP_{A}(x)= RP_{A_{\mathbf{1}}}(x)\in A$.\smallskip

By applying \cite[Theorem 2.5]{Ara89} we deduce that $LP(x)$ and $RP (x)$ are equivalent projections in $A_{\mathbf{1}}$, that is, there exists a partial isometry $e\in A_{\mathbf{1}}$ such that $e^* e = RP(x)$ and $e e^* =LP(x)$.\smallskip

We shall finally show that $e\in A$. Let us write $e= e_1 +\lambda \mathbf{1}$ with $e_1$ in $A$ and $\lambda\in \mathbb{C}$. Since $A\ni LP (x) = e e^* = e_1 e_1^* + \lambda e_1^* + \overline{\lambda} e_1 + |\lambda|^2 \mathbf{1},$ it follows that $\lambda=0,$ and thus $e = e_1\in A$.
\end{proof}

\begin{remark}\label{r wR Cstar algebras are generated by their projections}{\rm We have already commented that the idea behind Rickart's original paper \cite[Theorem 2.10]{Rick46} (see also \cite[Proposition 8.1]{BerBaerStarRings}) was to show that every Rickart C$^*$-algebra is generated by its projections. Actually, the same occurs for weakly Rickart C$^*$-algebras. Namely, let $a$ be a positive element in a weakly Rickart C$^*$-algebra $A$. Let $p= RP(a)$ denote the ARP projection of $a$ in $A$ when the latter is regarded as a weakly Rickart C$^*$-algebra. It follows from \cite[Proposition 5.6]{BerBaerStarRings} that $p A p$ is a Rickart C$^*$-algebra with unambiguous left and right projections for every element in $p A p.$ It follows from the mentioned Theorem 2.10 in \cite{Rick46} that $p A p$ is generated by its projections. In particular $a\in pAp$ can be approximated in norm by finite linear combinations of projections. 
}\end{remark}


Given a positive element $a$ in a C$^*$-algebra $A$, the
hereditary C$^*$-subalgebra of $A$ generated by $a$ coincides with the norm closure, $ \overline{a A a}$, of $a A a$ and contains the C$^*$-subalgebra generated by $a$ (see \cite[Corollary 3.2.4]{Murph}). This hereditary C$^*$-subalgebra is precisely the inner ideal of $A$ generated by $a$, when $A$ is regarded as a JB$^*$-triple (cf. \cite[pages 19-20]{BunChuZal2000}). Therefore the symbol $A(a)$  will denote the hereditary C$^*$-subalgebra of $A$ generated by $a$. It is further known, even in a more general setting, that $A(a)^{**}$ identifies with $(A^{**})_2 (rp(a)) = rp(a) A^{**} rp(a)$ (because $rp(a)$ is a projection), and $A(a)$ is actually a C$^*$-subalgebra of this latter hereditary C$^*$-subalgebra of $A^{**}$ (cf. \cite[Proposition 2.1.]{BunChuZal2000} whose proof is valid here too). It is worth mentioning that \begin{equation}\label{eq inner ideal intersection with A positive} A(a) = (A^{**})_2 (rp(a))\cap A.
\end{equation} Namely, the inclusion $\subseteq$ is clear. We may clearly assume that $\|a\|\leq 1$. On the other hand, it is not hard to see that $a$ is a strictly positive element in the hereditary C$^*$-subalgebra $I =(A^{**})_2 (rp(a))\cap A$, and hence $(a^{\frac1n})_n$ is an approximate identity in $I$ (cf. \cite[Exercise 3 in page 31]{Tak}). Given $x\in I$, the sequence  $(a^{\frac1n} x a^{\frac1n})_n$ converges in norm to $x$ and is contained in $A(a)$, therefore $x\in A(a).$\smallskip

It is well known that every $\sigma$-unital hereditary C$^*$-subalgebra of $A$ is of the form $A(x)$, with $x$ positive in $A$ (cf. \cite[Theorem 3.2.5.]{Murph}, see also \cite[\S 1.5]{Ped} and \cite[\S 3.2]{Murph} for a detailed discussion on hereditary C$^*$-subalgebras and ideals). Moreover, as commented by G.K. Pedersen in \cite[page 16]{PedSAW}, $\sigma$-unital hereditary C$^*$-subalgebras of $A$ can be also represented in the form $\overline{(A y)}\cap \overline{(y^* A)}$ with $y\in A$. Clearly, each hereditary C$^*$-subalgebra of the form $A(a)$ with $a\geq 0$ writes in the form  $\overline{(A a)}\cap \overline{(a^* A)}$ (just apply \eqref{eq inner ideal intersection with A positive} in the non-trivial inclusion). On the other direction, for each $y\in A$, we shall show that $\overline{(A y)}\cap \overline{(y^* A)} = A(y^* y).$ Indeed, since $\overline{(A y)}\cap \overline{(y^* A)}$ is an inner ideal and contains $y^* y$, we deduce that $\overline{(A y)}\cap \overline{(y^* A)}\supseteq A(y^*y).$  If we take $z\in \overline{(A y)}\cap \overline{(y^* A)}$, we clearly have $r(y^* y ) z = z r(y^* y)$, and thus $\overline{(A y)}\cap \overline{(y^* A)}\subseteq A(y^* y)$ (cf. \eqref{eq inner ideal intersection with A positive}).\smallskip

For a general element $x$ in a C$^*$-algebra $A$, the  inner ideal of $A$ generated by $x$ can be described as the norm closure of $x A x$ (cf. \cite[pages 19-20]{BunChuZal2000}).\smallskip

Let us recall that elements $a,b$ in a C$^*$-algebra $A$ are called orthogonal ($a\perp b$ in short) if $a b^* = b^* a =0.$ The orthogonal complement of a subset $\mathcal{S}\subset A$ is defined as $\mathcal{S}^{\perp} :=\{ a\in A : a\perp x \hbox{ for all } x\in\mathcal{S}\}.$

\begin{proposition}\label{p sufficient and necessary conditions in terms of inner ideals without order} Let $A$ be a C$^*$-algebra. Then the following statements hold:  \begin{enumerate}[$(a)$]\item $A$ is a weakly Rickart C$^*$-algebra if, and only if, given $x\in A$ and an inner ideal $J\subseteq A$ with $I = A(x)\perp J $, there exists a partial isometry $e$ in $A$ such that $I\subseteq A_2(e)$ and $J\subseteq A_0(e)$;
\item $A$ is a Rickart C$^*$-algebra if, and only if, $A$ is unital and given $x\in A$ and an inner ideal $J\subseteq A$ with $I = A(x)\perp J $, there exists a partial isometry $e$ in $A$ such that $I\subseteq A_2(e)$ and $J\subseteq A_0(e)$.    
\end{enumerate} 
\end{proposition}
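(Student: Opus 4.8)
The plan is to reduce both statements to G.\,K.\ Pedersen's characterization of weakly Rickart C$^*$-algebras (Proposition~\ref{Prop Pedersen} together with Remark~\ref{r Pedersen is a characterization}), using throughout the identification of inner ideals with hereditary C$^*$-subalgebras, namely $A(x)=\overline{xAx}$ for general $x$ and $A(a)=\overline{aAa}$ for $a\geq 0$, and the extension of Ara's theorem in Lemma~\ref{l Ara's theorem for weakly Rickark}. I will prove $(a)$ in full and deduce $(b)$ from it: by $(a)$ the displayed condition is equivalent to $A$ being weakly Rickart, while a weakly Rickart C$^*$-algebra is Rickart exactly when it is unital (a Rickart C$^*$-algebra is automatically unital, and a unital weakly Rickart C$^*$-algebra is Rickart, as recalled in the Introduction via $R(\{x\})=(\mathbf{1}-RP(x))A$). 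Hence $(b)$ reads ``$A$ unital and weakly Rickart'', which is precisely ``$A$ Rickart''.

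For the ``only if'' part of $(a)$, I assume $A$ is weakly Rickart and fix $x$ and an inner ideal $J$ with $A(x)\perp J$. By Lemma~\ref{l Ara's theorem for weakly Rickark} the projections $LP(x)$ and $RP(x)$ are Murray--von Neumann equivalent, so I pick a partial isometry $e\in A$ with $ee^*=LP(x)$ and $e^*e=RP(x)$, whence $A_2(e)=LP(x)\,A\,RP(x)$. From $LP(x)\,x=x=x\,RP(x)$ I get $x=LP(x)\,x\,RP(x)$, so $xax=LP(x)\,(x\,RP(x)\,a\,LP(x)\,x)\,RP(x)\in A_2(e)$ for every $a\in A$; since $A_2(e)$ is the range of a contractive projection it is norm-closed, giving $A(x)=\overline{xAx}\subseteq A_2(e)$. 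For $J\subseteq A_0(e)$ I use $x\in A(x)$, so that $J\perp x$, i.e.\ $jx^*=x^*j=0$ for all $j\in J$. Since $LP(x)$ is the annihilating right projection of $x^*$ (by passing to adjoints, $RP(x^*)=LP(x)$), the relation $x^*j=0$ forces $LP(x)\,j=0$; dually, $jx^*=0$ gives $xj^*=0$, hence $RP(x)\,j^*=0$ and $j\,RP(x)=0$. A direct computation then shows $(\mathbf{1}-ee^*)\,j\,(\mathbf{1}-e^*e)=j$, i.e.\ $j\in A_0(e)$.

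For the ``if'' part of $(a)$, I verify Pedersen's condition (WRC$^*$). Let $B$ be a $\sigma$-unital and $C$ an arbitrary hereditary C$^*$-subalgebra of $A$ with $B\perp C$; write $B=A(a)$ with $a\geq 0$, and note that $C$, being hereditary, is an inner ideal of $A$. Applying the hypothesis to $x=a$ and $J=C$ (so that $A(a)=B\perp C$), I obtain a partial isometry $e$ with $B\subseteq A_2(e)$ and $C\subseteq A_0(e)$. I claim the projection $p=ee^*\in A^+$ is a unit for $B$ and annihilates $C$: for $b\in B\subseteq ee^*A\,e^*e$ one has $ee^*b=b$, and self-adjointness of $B$ applied to $b^*$ gives, after taking adjoints, $b\,ee^*=b$; for $c\in C\subseteq(\mathbf{1}-ee^*)A(\mathbf{1}-e^*e)$ one has $pc=0$, and self-adjointness of $C$ yields $cp=0$. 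Pedersen's Proposition~\ref{Prop Pedersen} then gives that $A$ is weakly Rickart.

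The step I expect to be the main obstacle is the ``only if'' direction in the non-unital case. There one must genuinely invoke the extension of Ara's theorem (Lemma~\ref{l Ara's theorem for weakly Rickark}) to realise the connecting partial isometry $e$ inside $A$ itself and not merely in a unitization or in $A^{**}$, and then perform the Peirce bookkeeping in $A^{**}$, where $\mathbf{1}$ denotes only the unit of the bidual. The technical heart is the conversion of the single triple-orthogonality relation $J\perp x$ into the two one-sided annihilation relations $LP(x)\,j=0$ and $j\,RP(x)=0$ through the defining properties of the annihilating left and right projections and the identity $RP(x^*)=LP(x)$.
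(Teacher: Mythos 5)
Your proof is correct and follows essentially the same route as the paper: the ``only if'' direction uses Lemma~\ref{l Ara's theorem for weakly Rickark} to produce $e$ with $ee^*=LP(x)$, $e^*e=RP(x)$ and then the same Peirce bookkeeping, while the ``if'' direction reduces to Pedersen's condition (WRC$^*$) in Proposition~\ref{Prop Pedersen}, and $(b)$ is deduced from $(a)$ exactly as in the paper. The only cosmetic difference is that the paper, instead of citing Pedersen's proposition as a black box, re-runs his argument to exhibit $e^*e$ directly as the annihilating right projection of an arbitrary $y\in A$; your verification of the hypothesis of (WRC$^*$) with $B=A(a)$ $\sigma$-unital and $C$ arbitrary is an equally valid shortcut.
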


\begin{proof}$(a)$ ($\Rightarrow$) By Lemma \ref{l Ara's theorem for weakly Rickark} $LP(x)$ and $RP (x)$ are equivalent projections in $A$, that is, there exists a partial isometry $e\in A$ such that $e^* e = RP(x)$ and $e e^* =LP(x)$. Clearly, $x\in A_2(e),$ and hence $\{x,A,x\}\subseteq A_2(e)$. It follows that $A(x) \subseteq A_2(e)$. \smallskip

On the other hand, 	for each $y\in J \perp A(x)$ we have $ x^* y =  0 = y x^*,$ and since $ e^*e = RP (x) = LP (x^*)$ and $e e^* = LP (x) = RP (x^*)$ we deduce that $ e e^* y = 0 = y e^* e$, witnessing that $y \in A_0 (e)$.\smallskip
	
($\Leftarrow$) This implication follows from Proposition \ref{Prop Pedersen} and its proof in \cite[Proposition 1]{PedSAW}, we shall revisit the argument for completeness. Fix $y\in A$ and consider the inner ideal $I= \overline{(A y)} \cap \overline{(y^* A)} = A(y^* y)$. Let $R= R(y)$ denote the right annihilator of $y$ in $A$. In this case $R\cap R^* = \{y^* y\}^{\perp}:=J$. By the assumptions, there exists a partial isometry $e\in A$ such that $$I\subseteq  A_2(e) = ee^* A e^*e \hbox{ and } J \subseteq A_0(e)  = (\mathbf{1}-ee^*) A (\mathbf{1}-e^*e).$$  Therefore, $ y^*y = ee^* y^*y e^*e $, and thus $s_{r}(y) e^*e= r_{_{A^{**}}} (y^*y) e^*e = r_{_{A^{**}}} (y^*y)= s_{r}(y)$ in ${A^{**}}$ and $y e^* e= y r_{_{A^{**}}} (y^*y) e^* e = y r_{_{A^{**}}} (y^*y) = y$. \smallskip

If $z\in R$, we have $zz^* \in  R\cap R^* \subseteq A_0(e)  = (\mathbf{1}-ee^*) A (\mathbf{1}-e^*e)$, which proves that $zz^* = (\mathbf{1}-ee^*) zz^* (\mathbf{1}-e^*e),$ and $zz^* = (\mathbf{1}-e^*e) zz^* (\mathbf{1}-ee^*).$ By repeating the arguments above we get $ (\mathbf{1} - e^* e) r_{_{A^{**}}} ( zz^*) = r_{_{A^{**}}} ( zz^*)$ leading to $e^* e s_l(z)= 0$ in $A^{**}$, and to $e^* e z = e^* e s_l(z) z = 0$. \smallskip

$(b)$ This is clear from $(a)$ and the fact that a C$^*$-algebra is a Rickart C$^*$-algebra if and only if it is weakly Rickart and unital (cf. \cite[Proposition 5.2]{BerBaerStarRings}).
\end{proof}

The advantage of the previous proposition is that the equivalent reformulations do not depend on the natural partial order given by the cone of positive elements in a C$^*$-algebra. \smallskip

\begin{remark}\label{r SAWstar algebras only sufficient} Let $A$ be a C$^*$-algebra. Clearly $A$ is a SAW$^*$-algebra if given $x,y\in A$ with $x\perp y$, there exists a partial isometry $e$ in $A$ such that $I = A(x)\subseteq A_2(e)$ and $J = A(y)\subseteq A_0(e)$ {\rm(}cf. \cite[Proposition 1]{PedSAW}{\rm)}. We do not know if the reciprocal implication holds. The lacking of an analogue of Ara's theorem in \cite[Theorem 2.5]{Ara89} proving the equivalence of left and right projections in the setting of SAW$^*$-algebras seems to be a major obstacle. 	
\end{remark}

In the light of Pedersen's result in Proposition \ref{Prop Pedersen} and the characterization in terms of inner ideals given in Proposition \ref{p sufficient and necessary conditions in terms of inner ideals without order}, it seems natural to ask if a characterization of Baer or AW$^*$-algebras can be obtained in terms of inner ideals. If we assume some extra hypothesis the answer is yes.

\begin{proposition}\label{p characterization of finite Rickart and Baer Cstar algebras by inner ideals} Let $A$ be a finite unital C$^*$-algebra. Then the following statements hold:  \begin{enumerate}[$(a)$]\item $A$ is a Rickart C$^*$-algebra if, and only if, given $x\in A$ and an inner ideal $J\subseteq A$ with $I = A(x)\perp J $, there exists a partial isometry $e$ in $A$ such that $J\subseteq A_2(e)$ and $I\subseteq A_0(e)$;
\item  $A$ is an AW$^*$-algebra if, and only if, for any family $\{x_i\}_{i}$ of mutually orthogonal elements in $A$ and each inner ideal $J\subseteq A$ with $A(x_i)\perp J$ for all $i$, there exists a partial isometry $e\in A$ satisfying $J\subseteq A_2(e)$ and $A(x_i)\subseteq A_0(e)$ for all $i$.
\end{enumerate} 
\end{proposition}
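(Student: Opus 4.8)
The plan is to read both items as the \emph{complementary} forms of the order-free characterisation obtained in Proposition~\ref{p sufficient and necessary conditions in terms of inner ideals without order}, in which the Peirce-$2$ and Peirce-$0$ subspaces exchange their roles. The elementary bridge between the two formulations is the following: if $e,f$ are partial isometries in $A$ with $ff^{*}=\mathbf{1}-ee^{*}$ and $f^{*}f=\mathbf{1}-e^{*}e$, then $A_{2}(f)=(\mathbf{1}-ee^{*})A(\mathbf{1}-e^{*}e)=A_{0}(e)$ and $A_{0}(f)=ee^{*}Ae^{*}e=A_{2}(e)$. Hence producing the partial isometry required here amounts to replacing the one supplied by Proposition~\ref{p sufficient and necessary conditions in terms of inner ideals without order} by one implementing the Murray--von Neumann equivalence of the \emph{complementary} projections, and it is exactly at this replacement that the finiteness hypothesis will enter.

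I would first dispose of the two ``if'' implications, which need no finiteness. For $(a)$, assuming the stated condition, fix $y\in A$ and apply it to $x=y^{*}y$ and to the inner ideal $J=\{y^{*}y\}^{\perp}=R(y)\cap R(y)^{*}$, which is orthogonal to $I=A(y^{*}y)$; this yields a partial isometry $e$ with $A(y^{*}y)\subseteq A_{0}(e)$ and $\{y^{*}y\}^{\perp}\subseteq A_{2}(e)$. Writing $p=ee^{*}$, the first inclusion gives $p\,y^{*}y=0$, whence $yp=0$ and $pA\subseteq R(y)$, while the second inclusion, applied to $zz^{*}\in R(y)\cap R(y)^{*}$ for each $z\in R(y)$, forces $s_{l}(z)\le p$ and therefore $z=pz\in pA$. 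Thus $R(y)=pA$ with $p$ a projection of $A$, so $A$ is a Rickart C$^{*}$-algebra. For the ``if'' part of $(b)$, the single-element instance of the hypothesis is precisely the condition in $(a)$, so $A$ is already a Rickart C$^{*}$-algebra, and it remains to check that every orthogonal family of projections has a supremum. Given such a family $\{p_{i}\}$, I would feed $\{x_{i}\}=\{p_{i}\}$ and $J=\{p_{i}:i\}^{\perp}$ into the hypothesis to obtain $e$ with $p_{i}\in A_{0}(e)$ for all $i$ and $\{p_{i}\}^{\perp}\subseteq A_{2}(e)$; the same computation as above shows $p_{i}\le\mathbf{1}-ee^{*}$ for every $i$, whereas any projection $q$ orthogonal to all the $p_{i}$ lies in $A_{2}(e)$ and hence satisfies $q\le ee^{*}$. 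These two facts identify $\mathbf{1}-ee^{*}$ as $\sup_{i}p_{i}$, so the projections of $A$ form a complete lattice and $A$ is an AW$^{*}$-algebra (cf. \cite[\S4, Proposition 1]{BerBaerStarRings}).

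For the ``only if'' implications I would start from the partial isometry of Proposition~\ref{p sufficient and necessary conditions in terms of inner ideals without order}. In $(a)$, given $x$ and an inner ideal $J\perp I=A(x)$, that proposition supplies a partial isometry $e$ with $I\subseteq A_{2}(e)$, $J\subseteq A_{0}(e)$, $ee^{*}=LP(x)$ and $e^{*}e=RP(x)$. By Lemma~\ref{l Ara's theorem for weakly Rickark}, $LP(x)\sim RP(x)$, and the finiteness of $A$ promotes this to the equivalence of the complements $\mathbf{1}-LP(x)\sim\mathbf{1}-RP(x)$; choosing $f$ with $ff^{*}=\mathbf{1}-LP(x)$ and $f^{*}f=\mathbf{1}-RP(x)$ and invoking the bridge gives $J\subseteq A_{0}(e)=A_{2}(f)$ and $I\subseteq A_{2}(e)=A_{0}(f)$, as required. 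In $(b)$, for an orthogonal family $\{x_{i}\}$ I would pass to the suprema $L=\sup_{i}LP(x_{i})$ and $P=\sup_{i}RP(x_{i})$, which exist since $A$ is an AW$^{*}$-algebra; the additivity of Murray--von Neumann equivalence over orthogonal families, combined with $LP(x_{i})\sim RP(x_{i})$, gives $L\sim P$, finiteness again yields $\mathbf{1}-L\sim\mathbf{1}-P$, and the partial isometry $f$ with $ff^{*}=\mathbf{1}-L$, $f^{*}f=\mathbf{1}-P$ satisfies $A(x_{i})\subseteq LAP=A_{0}(f)$ for every $i$ and $J\subseteq(\mathbf{1}-L)A(\mathbf{1}-P)=A_{2}(f)$ (the last inclusion because each $y\in J$ is orthogonal to all the $A(x_{i})$, so $Ly=0=yP$).

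The single genuinely delicate step, and the only place where finiteness is used, is the implication $p\sim q\Rightarrow\mathbf{1}-p\sim\mathbf{1}-q$; it is indispensable, as the unilateral shift on $B(H)$ shows that the complementary condition can fail in an infinite AW$^{*}$-algebra. For the finite AW$^{*}$-algebras of part $(b)$ this complementary equivalence is a standard consequence of the Kaplansky--Berberian dimension theory for finite Baer $^{*}$-rings. The subtler point is part $(a)$, where $A$ is only assumed to be a finite Rickart C$^{*}$-algebra and need not be an AW$^{*}$-algebra: here I would derive the complementary equivalence from Ara's theorem \cite{Ara89} together with the comparison theory available in finite Rickart C$^{*}$-algebras, and confirming that this theory suffices in the merely Rickart case is the main obstacle of the proof.
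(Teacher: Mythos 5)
Your overall strategy coincides with the paper's: both ``if'' directions are handled exactly as in the paper's proof (the direct verification that $R(y)=pA$ for $(a)$, and the identification of $\mathbf{1}-ee^{*}$ as $\bigvee_i p_i$ for $(b)$), and for the ``only if'' directions you, like the paper, reduce everything to producing a partial isometry that implements the Murray--von Neumann equivalence of $\mathbf{1}-LP(x)$ and $\mathbf{1}-RP(x)$ (respectively, of the complements of the suprema of the left and right projections of the family). The problem is that you leave precisely this reduction step unproved: you state that passing from $LP(x)\sim RP(x)$ to $\mathbf{1}-LP(x)\sim\mathbf{1}-RP(x)$ in a C$^{*}$-algebra that is only assumed to be a finite Rickart C$^{*}$-algebra (and need not be Baer) ``is the main obstacle of the proof,'' and you supply no argument or precise reference for it. Since this is the only point at which finiteness enters, and the whole ``only if'' direction of $(a)$ hinges on it, this is a genuine gap rather than a routine omission.

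The paper closes the gap by citing Handelman \cite[Theorem 4.1$(c)$]{Hand79}: in a finite Rickart C$^{*}$-algebra the projections $LP(x)$ and $RP(x)$ are not merely equivalent but \emph{unitarily} equivalent, $RP(x)=uLP(x)u^{*}$, whence $\mathbf{1}-RP(x)=u(\mathbf{1}-LP(x))u^{*}$ and one may take $e=(\mathbf{1}-LP(x))u^{*}$ directly; no dimension theory or generalized comparability for Rickart (as opposed to Baer) $^{*}$-rings is required. The same citation settles part $(b)$: after adding the orthogonal partial isometries $w_i$ to a single $w$ with $ww^{*}=\bigvee_i LP(x_i)$ and $w^{*}w=\bigvee_i RP(x_i)$ via \cite[Theorem 20.1$(iii)$]{BerBaerStarRings}, the paper applies Handelman's unitary equivalence to $ww^{*}$ and $w^{*}w$ instead of the Kaplansky--Berberian dimension theory you gesture at. With that one reference inserted at the step you flagged, your argument becomes a complete proof essentially identical to the paper's.
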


\begin{proof}$(a)$ $(\Rightarrow)$ Let us fix $x\in A$. Since $A$ is a finite Rickart C$^*$-algebra, $LP(x)$ and $RP(x)$ are unitarily equivalent \cite[Theorem 4.1$(c)$]{Hand79}, that is, there exists a unitary $u\in A$ such that $RP(x) = u LP(x) u^*,$  and hence $\11-RP(x) = u (\11-LP(x)) u^*.$ Set $e=  (\11-LP(x)) u^*.$ Clearly, $e$ is a partial isometry with $ee^* = \11- LP(x)$ and $e^* e = \11- RP(x),$ and $x\in LP(x)\ A \  RP(x) = (\11-ee^*) A (\11-e^* e) = A_0 (e).$ This proves that $A(x) \subseteq A_0(e).$\smallskip

If we take $y \in J\perp I$, it follows that $y x^* = x^* y=0,$ which implies that $y RP(x) = LP(x) y=0,$ and thus $y\in (\11-LP(x)) A (\11-RP(x)) = A_2(e).$\smallskip

$(\Leftarrow)$ is a consequence of the equivalence in (RC$^*$) in page \pageref{equivalence RCstar}.\smallskip

$(b)$ $(\Rightarrow)$ Suppose $A$ is an AW$^*$-algebra (the projections in $A$ form a complete lattice \cite[Proposition 4.1]{BerBaerStarRings}). Let us take a family $\{x_i\}_{i}$ of mutually orthogonal elements in $A$ and an inner ideal $J\subseteq A$ with $A(x_i)\perp J$ for all $i$. It follows from the hypothesis that $RP(x_i)\perp RP(x_j)$ and $LP(x_i)\perp LP(x_j),$ for all $i\neq j$. By  \cite[Theorem 4.1$(c)$]{Hand79} $LP(x_i)$ and $RP(x_i)$ are unitarily equivalent, and hence equivalent via a partial isometry $w_i$ for all $i$. Theorem 20.1$(iii)$  in \cite{BerBaerStarRings} proves the existence of a partial isometry $w$ such that $ww^* = \bigvee_i LP(x_i),$ $w^* w = \bigvee_i RP(x_i)$ and $w RP(x_i) = w_i = LP(x_i) w$ for all $i$ (i.e. orthogonal partial isometries in an AW$^*$-algebra are addable). Applying once again that $A$ is a finite Rickart C$^*$-algebra we deduce that $w w^*$ and $w^* w$ are unitarily equivalent \cite[Theorem 4.1$(c)$]{Hand79}, and thus $\11 -w w^*$ and $\11-w^* w$ are equivalent. Let us take a partial isometry $e$ in $A$ with $e e^* = \11 -w w^*$ and $e^* e = \11-w^* w.$ It is easy to check that, by construction, $$\begin{aligned}
A_0(e) &= (\11-ee^*)A (\11-e^* e) = ww^* A w^* w \\
&= \left(\bigvee_i LP(x_i)\right) A \left(\bigvee_i RP(x_i) \right)\supset LP(x_{i_0}) A RP(x_{i_0}) =  A(x_{i_0}),
\end{aligned}
$$  for all $i_0$. Given $y \in J$ and an index $i_0$, it follows from the properties of the left and right projections of $x_{i_0}$ and the fact that $J\perp x_{i_0},$ that $$J \subseteq (\11- LP(x_{i_0})) A (\11- RP(x_{i_0})), \hbox{ for all } i_0,$$ and thus $$\begin{aligned} J &\subseteq \left( \bigwedge_i  (\11- LP(x_{i_0}))\right) A \left( \bigwedge_i  (\11- RP(x_{i_0}))\right) \\ &= \left( \11 - \bigvee_i  LP(x_{i_0}) \right) A \left(   \11- \bigvee_i RP(x_{i_0})\right) = ee^* A e^* e = A_2 (e).
\end{aligned}
$$

$(\Leftarrow)$ It follows from $(a)$ that $A$ is a Rickart C$^*$-algebra, and thus $A$ is unital. Let $\{p_i\}_{i\in \Gamma}$ be a family of mutually orthogonal projections in $A$. By applying the hypothesis to the inner ideal $J := \{x\in A : x\perp p_i \hbox{ for all } i\in \Gamma\}$, we deduce the existence of a partial isometry $e\in A$ such that $J\subseteq A_2(e)$ and $A(p_i) = A_2(p_i)\subseteq A_0(e)$ for all $i\in \Gamma$. The element $q = \11 - ee^*$ is a projection in $A$ satisfying $q p_i = p_i$ (equivalently, $q\geq p_i$) for all $i\in \Gamma.$ Let $r$ any other projection in $A$ with $r\geq p_i$ for all $i\in \Gamma.$ The property $(\11-r) p_i = 0$ for all $i$, implies that $\11-r\in J\subseteq A_2(e),$ and thus $ee^* (\11-r) = \11 -r$ and $(\11-ee^*) (\11-r) = 0,$ witnessing that $q= \11-ee^*\leq r,$ and therefore $q = \bigvee_i p_i$ in $A$. We have shown that every orthogonal family of projections in $A$ has a supremum. Proposition 4.1$(a)\Leftrightarrow(c)$ in  \cite{BerBaerStarRings} proves that that $A$ is an AW$^*$-algebra. Actually, by applying the same argument with $\11 - e^*e$ instead $\11 - ee^*$ we get $\11 - e^*e = \bigwedge_i p_i = \11 - ee^*$. 
\end{proof}

\begin{remark} The characterization provided in Proposition \ref{p characterization of finite Rickart and Baer Cstar algebras by inner ideals} is not valid without the hypothesis of finiteness. Consider, for example, the Hilbert space $H = \ell_2$ with orthonormal basis $\{\xi_n: n\in \mathbb{N}\}$ and $A=B(H)$. Take a partial isometry $v$ such that $\11-vv^* = \xi_1\otimes \xi_1$ is a rank-one projection and $\11-v^* v = \xi_1\otimes \xi_1 + \xi_2\otimes \xi_2$ has rank $2$. If for $J = \{v\}^{\perp}= A_0(v)\perp A(v) = A_2(v)$ there were a partial isometry $e$ satisfying that $A_0(v) =J \subseteq A_2(e)$ and $A_2(v) \subseteq A_0(e)$ we would have $\xi_i\otimes \xi_j \perp e$ for all $j\geq 3,$ $i\geq 2$ (because $\xi_i\otimes \xi_j\in A_2(v)$ for such $i$ and $j$). In particular, $e\perp \xi_2 \otimes \xi_3$, which gives $e^* (\xi_2 \otimes \xi_3) =0,$ and consequently  $ee^* (\xi_2) =0.$ On the other hand, $\xi_1\otimes \xi_1$ and $\xi_1\otimes \xi_2$ belong to $A_0(v)\subseteq A_2(e),$ and hence $\xi_1\otimes \xi_2 =\{e,e, \xi_1\otimes \xi_2\} = \frac12 (ee^* (\xi_1\otimes \xi_2) + (\xi_1\otimes \xi_2) e^* e)$ and $$2 \xi_1 = ee^* (\xi_1) + \langle e^*e (\xi_2),\xi_2\rangle \xi_1 = ee^* (\xi_1),$$ which is imposible since $\|ee^* (\xi_1)\| \leq 1.$
\end{remark}

\begin{remark}\label{r uniqueness of the partial isometry in Proposition charact inner ideals Cstar} The partial isometry $e$ appearing in the statements of Proposition \ref{p sufficient and necessary conditions in terms of inner ideals without order} need not be unique. Actually if $e$ is a partial isometry satisfying the desired conclusion, then the partial isometry $\lambda e$ satisfies the same property for all $\lambda$ in the unit sphere of $\mathbb{C}$.
\end{remark}

The partial isometry $e$ appearing in Proposition \ref{p sufficient and necessary conditions in terms of inner ideals without order}$(a)$ induces a local order in the C$^*$-algebra $(A_2(e),\bullet_e,*_e)$ and we actually obtain a strengthened version of the statement. 

\begin{proposition}\label{p sufficient and necessary conditions in terms of inner ideals without order but local order} Let $A$ be a C$^*$-algebra. Then $A$ is a weakly Rickart C$^*$-algebra if, and only if, given $x\in A$ and an inner ideal $J\subseteq A$ with $I = A(x)\perp J $, there exists a partial isometry $e$ in $A$ such that $I\subseteq A_2(e),$ $e^*e =RP(x)$, $e e^* = LP(x),$ $x$ is a positive element in the C$^*$-algebra $(A_2(e), \bullet_e, *_e),$ $A(x)$ is a C$^*$-subalgebra of the latter C$^*$-algebra and $J\subseteq A_0(e)$.
\end{proposition}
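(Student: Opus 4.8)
The reverse implication is immediate: a partial isometry $e$ fulfilling the conclusion stated above in particular satisfies $I\subseteq A_2(e)$ and $J\subseteq A_0(e)$, so the criterion in Proposition~\ref{p sufficient and necessary conditions in terms of inner ideals without order}$(a)$ applies and $A$ is a weakly Rickart C$^*$-algebra.

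For the direct implication I would fix $x\in A$ and an inner ideal $J$ with $I=A(x)\perp J$, and reduce everything to a single construction: a partial isometry $e\in A$ realising the \emph{polar decomposition} of $x$ with full range projections, namely $e^*e=RP(x)$, $ee^*=LP(x)$ and $x=e\,|x|$ with $|x|=(x^*x)^{1/2}$. Granting such an $e$, the inclusions $I\subseteq A_2(e)$ and $J\subseteq A_0(e)$ are obtained by repeating the argument of Proposition~\ref{p sufficient and necessary conditions in terms of inner ideals without order}$(a)$, which uses only the two identities $e^*e=RP(x)$ and $ee^*=LP(x)$ (and the orthogonality $A(x)\perp J$).

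The two genuinely new assertions would then be read off from the corner C$^*$-algebra $RP(x)\,A\,RP(x)=e^*e\,A\,e^*e$. The map $\psi\colon (A_2(e),\bullet_e,*_e)\to (RP(x)\,A\,RP(x),\cdot,*)$ given by $\psi(a)=e^*a$ is a unital $^*$-isomorphism of C$^*$-algebras, with inverse $c\mapsto ec$: indeed $\psi(a\bullet_e b)=e^*(ae^*b)=(e^*a)(e^*b)=\psi(a)\psi(b)$ and $\psi(a^{*_e})=e^*(ea^*e)=a^*e=(e^*a)^*=\psi(a)^*$. Since $x=e\,|x|$ forces $\psi(x)=e^*x=|x|$, which is positive in $RP(x)\,A\,RP(x)$, the element $x$ is positive in $(A_2(e),\bullet_e,*_e)$; this is exactly where $e^*e=RP(x)$ enters, through $(e^*x)^*(e^*x)=x^*ee^*x=x^*x=|x|^2$. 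Being an isometric $^*$-isomorphism, $\psi$ is also a triple isomorphism, so it maps the inner ideal of $A_2(e)$ generated by $x$ — which equals $A(x)$, because $A_2(e)$ is an inner ideal of $A$ containing $x$ and Peirce's rules give $\{x,A,x\}=\{x,A_2(e),x\}$ — onto the inner ideal of $RP(x)\,A\,RP(x)$ generated by $|x|$, i.e. the hereditary C$^*$-subalgebra $\overline{|x|\,RP(x)A\,RP(x)\,|x|}$. As the latter is a C$^*$-subalgebra containing $|x|\geq 0$, transporting it back through $\psi^{-1}$ shows that $A(x)$ is a C$^*$-subalgebra of $(A_2(e),\bullet_e,*_e)$ containing $x$ as a positive element, as required.

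The hard part is the construction of $e$, that is, the availability of a polar decomposition of $x$ inside $A$ whose initial and final projections are exactly $RP(x)$ and $LP(x)$. I would start from the partial isometry $e_0$ supplied by Lemma~\ref{l Ara's theorem for weakly Rickark}, so that $e_0^*e_0=RP(x)$ and $e_0e_0^*=LP(x)$, and then correct its phase. Writing $b=e_0^*x\in RP(x)\,A\,RP(x)$ one computes $b^*b=x^*ee^*x=x^*x=|x|^2$, so that $|b|=|x|$; it then remains to produce a unitary $w$ of the unital Rickart C$^*$-algebra $RP(x)\,A\,RP(x)$ with $w^*b=|x|$ and to put $e=e_0w$, which yields $e^*e=w^*RP(x)w=RP(x)$, $ee^*=e_0(ww^*)e_0^*=LP(x)$ and $e^*x=w^*b=|x|$. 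Guaranteeing that the phase of $b$ can be chosen to be a \emph{unitary lying in $A$}, rather than merely a partial isometry in the bidual, is the delicate step; I expect it to rest on Ara's equivalence theorem applied to the relevant defect projections, combined with the reduction to the unitisation used in the proof of Lemma~\ref{l Ara's theorem for weakly Rickark}. Once $e$ is in hand, the remainder of the proof is the formal bookkeeping described above.
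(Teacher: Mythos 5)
Your reverse implication, the inclusions $I\subseteq A_2(e)$, $J\subseteq A_0(e)$, and the way you read off positivity of $x$ and the C$^*$-subalgebra property of $A(x)$ from the $^*$-isomorphism $a\mapsto e^*a$ onto the corner $RP(x)\,A\,RP(x)$ are all correct and consistent with the paper (the paper checks positivity even more directly, by exhibiting $e|x|^{1/2}$ as a self-adjoint square root of $x$ in $(A_2(e),\bullet_e,*_e)$). The problem is the step you yourself flag as delicate: the existence of the polar decomposition $x=e|x|$ with $e^*e=RP(x)$ and $ee^*=LP(x)$ inside $A$. Your plan is to take the partial isometry $e_0$ from Lemma \ref{l Ara's theorem for weakly Rickark}, set $b=e_0^*x$, and then find a \emph{unitary} $w$ of the corner with $w^*b=|b|=|x|$. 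But producing such a $w$ is exactly the polar-decomposition problem all over again, one level down: $b$ is an element of the unital Rickart C$^*$-algebra $RP(x)\,A\,RP(x)$ with $LP(b)=RP(b)=\mathbf{1}$, and asking for a unitary $w$ with $b=w|b|$ is asking for a polar decomposition of $b$ with unitary phase. Ara's equivalence theorem (left and right projections are Murray--von Neumann equivalent) does \emph{not} by itself yield this; the existence of polar decompositions in Rickart C$^*$-algebras was a separate, substantially harder problem, solved by Ara--Goldstein \cite[Corollary 3.5]{AraGold1993} and Goldstein \cite[Corollary 7.4]{Gold95} via the solution of the matrix problem. So your construction of $e$ is circular as sketched, and this is precisely the one nontrivial input of the whole proposition.

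The paper's proof simply invokes the Ara--Goldstein/Goldstein polar decomposition theorem in the Rickart unitization $A_{\mathbf{1}}$ to get a partial isometry $e\in A_{\mathbf{1}}$ with $x=e|x|$, $ee^*=LP(x)$, $e^*e=RP(x)$, and then shows $e\in A$ by writing $e=e_1+\lambda\mathbf{1}$ and using $ee^*=LP(x)\in A$ to force $\lambda=0$ (the same unitization trick as in Lemma \ref{l Ara's theorem for weakly Rickark}). If you replace your phase-correction argument by a direct citation of that theorem, the rest of your write-up goes through; as it stands, the key existence claim is unproved.
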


\begin{proof} It suffices to prove the extra properties in the ``only if'' implication. Suppose $A$ is a weakly Rickart C$^*$-algebra. We shall assume that $A$ is non-unital, and its unitization $A_{\mathbf{1}} = A\oplus \mathbb{C} \mathbf{1}$ is a Rickart C$^*$-algebra \cite[Theorem 5.1]{BerBaerStarRings} (see also \cite[Lemma 3.6]{SaitoWright2015}).\smallskip 

Fix $x\in A.$ Another essential contribution by P. Ara and D. Goldstein (see \cite[Corollary 3.5]{AraGold1993}, \cite[Corollary 7.4]{Gold95}) assures the existence of a polar decomposition for $x,$ that is, there exists a partial isometry $e\in A_{\mathbf{1}}$ such that $x = e |x|$, $ee^* = LP(x)$ and $e^*e = RP(x)$ (cf. also \cite[Proposition 21.3]{BerBaerStarRings}). If we write $e$ in the form $e= e_1 +\lambda \mathbf{1}$ with $\lambda\in \mathbb{C}$, $e_1\in A$. Since $e_1 e_1^* + \lambda e_1^* + \overline{\lambda} e_1 + |\lambda|^2 \mathbf{1}  = e e^* = LP (x)\in A,$ we infer that $e = e_1\in A$, that is weakly Rickart C$^*$-algebras satisfy polar decomposition.\smallskip

Let $I= A(x)$ and let $J$ be an inner ideal orthogonal to $I$. By considering the partial isometry $e$ in the polar decomposition of $x$, we can easily check that $x$ is a positive element in the C$^*$-algebra $(A_2(e),\bullet_e,*_e)$, namely, $ee^* (e|x|^{\frac12}) e^*e = e|x|^{\frac12} = (e|x|^{\frac12})^{*_e}$, $(e|x|^{\frac12})\bullet_{e} (e|x|^{\frac12}) = e|x| = x$, and hence $x$ is positive in  $(A_2(e),\bullet_e,*_e)$. Finally, given $y\in J$ the conditions $x\perp y,$ $e e^* = LP(x)$ and $e^* e= RP(x)$ imply that $y\perp e$, and therefore $J\subseteq A_0(e)$. 
\end{proof}

\begin{corollary}\label{c range tripotents in weakly Rickart C* algebras} Let $A$ be a C$^*$-algebra. Then $A$ is a weakly Rickart C$^*$-algebra if, and only if, given $x\in A$ there exists a partial isometry $e$ in $A$ such that $A(x)\subseteq A_2(e),$ $x$ is a positive element in the C$^*$-algebra $(A_2(e), \bullet_e, *_e),$ and $A_0(e) = \{x\}^{\perp}.$
\end{corollary}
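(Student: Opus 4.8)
The plan is to read the corollary off Proposition~\ref{p sufficient and necessary conditions in terms of inner ideals without order but local order} by choosing the orthogonal inner ideal maximally, namely $J=\{x\}^{\perp}$, and then to upgrade the one-sided inclusion $\{x\}^{\perp}\subseteq A_0(e)$ it provides into the equality $A_0(e)=\{x\}^{\perp}$.

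For the ``only if'' implication, suppose $A$ is weakly Rickart and fix $x\in A$. Since $a\perp b$ precisely when $A(a)\perp A(b)$, the orthogonal complement $J:=\{x\}^{\perp}$ is an inner ideal of $A$ orthogonal to $I=A(x)$; indeed, a direct check with the triple product $\{a,b,c\}=\frac12(ab^*c+cb^*a)$ gives $\{J,A,J\}\subseteq J$. Applying Proposition~\ref{p sufficient and necessary conditions in terms of inner ideals without order but local order} to this pair yields a partial isometry $e$ with $A(x)\subseteq A_2(e)$, $ee^*=LP(x)$, $e^*e=RP(x)$, $x$ positive in $(A_2(e),\bullet_e,*_e)$, and $\{x\}^{\perp}\subseteq A_0(e)$. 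Thus every asserted property is already in hand except the reverse inclusion $A_0(e)\subseteq\{x\}^{\perp}$.

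That remaining inclusion is the only genuinely new point, and it reduces to a short Peirce computation. If $a\in A_0(e)$ then $a=(\mathbf{1}-ee^*)\,a\,(\mathbf{1}-e^*e)=(\mathbf{1}-LP(x))\,a\,(\mathbf{1}-RP(x))$. Using $x=LP(x)\,x=x\,RP(x)$, hence $x^*=RP(x)\,x^*=x^*\,LP(x)$ (equivalently $(\mathbf{1}-RP(x))x^*=0=x^*(\mathbf{1}-LP(x))$), one obtains $ax^*=0=x^*a$, that is, $a\perp x$ and so $a\in\{x\}^{\perp}$. This gives $A_0(e)=\{x\}^{\perp}$ and finishes the implication.

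For the converse, I would assume the displayed condition holds for every $x\in A$ and verify the criterion of Proposition~\ref{p sufficient and necessary conditions in terms of inner ideals without order}$(a)$. Given $x\in A$ and an inner ideal $J$ with $A(x)\perp J$, take the partial isometry $e$ provided by the hypothesis. Since $x\in A(x)$ we have $J\perp x$, i.e.\ $J\subseteq\{x\}^{\perp}=A_0(e)$, while $A(x)\subseteq A_2(e)$ by construction; these are exactly the two inclusions demanded by Proposition~\ref{p sufficient and necessary conditions in terms of inner ideals without order}$(a)$, whence $A$ is a weakly Rickart C$^*$-algebra. There is no serious obstacle here: the corollary is essentially Proposition~\ref{p sufficient and necessary conditions in terms of inner ideals without order but local order} repackaged with $J$ chosen as large as possible, and the only point needing a moment's care is confirming that $\{x\}^{\perp}$ is indeed an inner ideal so that the earlier proposition applies verbatim.
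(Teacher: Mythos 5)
Your argument is correct and follows essentially the same route as the paper: apply Proposition~\ref{p sufficient and necessary conditions in terms of inner ideals without order but local order} with $J=\{x\}^{\perp}$, then obtain the reverse inclusion $A_0(e)\subseteq\{x\}^{\perp}$ from $a=(\mathbf{1}-LP(x))\,a\,(\mathbf{1}-RP(x))$ together with $x=LP(x)\,x=x\,RP(x)$, and run the converse by feeding $J\subseteq\{x\}^{\perp}=A_0(e)$ back into the earlier characterization. The only cosmetic difference is that the paper cites Proposition~\ref{p sufficient and necessary conditions in terms of inner ideals without order but local order} again for the converse where you cite Proposition~\ref{p sufficient and necessary conditions in terms of inner ideals without order}$(a)$; the underlying verification is identical.
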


\begin{proof} $(\Rightarrow)$ By applying Proposition \ref{p sufficient and necessary conditions in terms of inner ideals without order but local order} to $I= A(x)$ and $J= \{x\}^{\perp}$ we find a partial isometry $e\in A$ satisfying that $I\subseteq A_2(e),$ $e^*e =RP(x)$, $e e^* = LP(x),$ $x$ is a positive element in the C$^*$-algebra $(A_2(e), \bullet_e, *_e),$ $A(x)$ is a C$^*$-subalgebra of $(A_2(e), \bullet_e, *_e),$ and $\{x\}^{\perp}= J\subseteq A_0(e)$.\smallskip

We shall show that $\{x\}^{\perp}= A_0(e)$. To this end take $a\in A_0(e).$ The identities $x a^* = x RP(x) (\mathbf{1}-e^* e) a^* = x (e^*e) (\mathbf{1}-e^* e) a^* =0,$ and $ a^* x= a^* (\mathbf{1}-e e^*) LP(x) x = a^* (\mathbf{1}-e e^*) (e e^*) x =0,$ show that $a\in \{x\}^{\perp}.$ \smallskip

$(\Leftarrow)$ This is a clear consequence of Proposition \ref{p sufficient and necessary conditions in terms of inner ideals without order but local order}, since for each $x\in A$ and each inner ideal $J\subseteq A$ with $I = A(x)\perp J $, by taking the partial isometry $e$ given by the hypothesis we have $J\subset \{x\}^{\perp} = A_0(e)$ and $A(x)\subseteq A_2(e).$ 
\end{proof}

We have seen in the proof of Proposition \ref{p sufficient and necessary conditions in terms of inner ideals without order but local order} that, as a consequence of the result by P. Ara and D. Goldstein \cite{AraGold1993,Gold95}, weakly Rickart C$^*$-algebras satisfy polar decomposition. It is well known that the partial isometry appearing in the polar decomposition of an element $a$ is uniquely determined by $|a|$ (cf. \cite[Propositions 21.1 and 21.3]{BerBaerStarRings}). We shall conclude this section by showing that the properties of the partial isometry $e$ in Corollary \ref{c range tripotents in weakly Rickart C* algebras} provide a characterization of the partial isometry in the polar decomposition. 

\begin{corollary}\label{c characterization of the pi in the polar decomposition} Let $x$ be an element in a weakly Rickart C$^*$-algebra $A$. Suppose $e$ is a partial isometry in $A$. Then the following are equivalent:
\begin{enumerate}[$(a)$]\item $e$ is the partial isometry in the polar decomposition of $x;$
\item $x$ is a positive element in the C$^*$-algebra $(A_2(e), \bullet_e, *_e),$ and $A_0(e) = \{x\}^{\perp}.$ 
\end{enumerate}
\end{corollary}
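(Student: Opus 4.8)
The plan is to prove the two implications separately; essentially all of the content lies in $(b)\Rightarrow(a)$. For $(a)\Rightarrow(b)$ there is nothing new to do: if $e$ is the partial isometry in the polar decomposition of $x$, then $e$ is exactly the partial isometry produced, via Proposition \ref{p sufficient and necessary conditions in terms of inner ideals without order but local order}, in the proof of Corollary \ref{c range tripotents in weakly Rickart C* algebras}, and that corollary already establishes, for this $e$, that $x$ is positive in $(A_2(e),\bullet_e,*_e)$ and that $A_0(e)=\{x\}^{\perp}$.

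For $(b)\Rightarrow(a)$ I would first convert the positivity hypothesis into an explicit polar form. The assignment $\psi(a):=e^*a$ defines a $*$-isomorphism of $(A_2(e),\bullet_e,*_e)$ onto the corner C$^*$-algebra $e^*eAe^*e$, with inverse $b\mapsto eb$ (routine Peirce computations). Consequently $x$ is positive in $(A_2(e),\bullet_e,*_e)$ if and only if $h:=e^*x$ is positive in $A$. Since $x\in A_2(e)$ yields $ee^*x=x$, we obtain $x=eh$, and then $x^*x=h\,e^*e\,h=h^2$, so $h=|x|$ and $x=e|x|$ with $|x|=e^*x\in e^*eAe^*e$. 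In particular $xe^*e=x$, which already gives $RP(x)\le e^*e$.

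The heart of the argument is the reverse inequality, i.e.\ that the ``excess'' projection $S:=e^*e-RP(x)$ vanishes; the hypothesis $A_0(e)=\{x\}^{\perp}$ is what forces this. Since $|x|\in e^*eAe^*e$ and $RP(|x|)=RP(x)$ we have $|x|\,RP(x)=|x|$, hence $|x|S=S|x|=0$. Now I would check two facts about the element $eS$. First, $ee^*(eS)e^*e=eS$, so $eS\in A_2(e)$. Second, using $x=e|x|$ together with $S|x|=|x|S=0$, one computes $(eS)x^*=e(S|x|)e^*=0$ and $x^*(eS)=|x|S=0$, so that $eS\perp x$, that is, $eS\in\{x\}^{\perp}=A_0(e)$. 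As $A_2(e)\cap A_0(e)=\{0\}$ by the Peirce decomposition, $eS=0$, and therefore $S=(eS)^*(eS)=0$. Thus $e^*e=RP(x)$ and $x=e|x|$.

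It then remains only to invoke uniqueness: $e$ is a partial isometry satisfying $e|x|=x$ and $e^*e=RP(x)=RP(|x|)$, which are precisely the relations characterising the partial isometry in the polar decomposition of $x$, so by its uniqueness (cf.\ \cite[Propositions 21.1 and 21.3]{BerBaerStarRings}) $e$ coincides with that partial isometry and $(a)$ holds. I expect the only genuine obstacle to be the step $e^*e=RP(x)$: from $A_0(e)=\{x\}^{\perp}$ one only reads off inclusions between corners of $A$, and in the absence of finiteness a direct rank or Murray--von Neumann comparison is unavailable; the Peirce-orthogonality observation that $eS$ lies in $A_2(e)\cap A_0(e)$ is the device that circumvents this cleanly.
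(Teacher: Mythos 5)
Your proof is correct, and its engine is the same as the paper's: use the hypothesis $A_0(e)=\{x\}^{\perp}$ together with the Peirce disjointness $A_2(e)\cap A_0(e)=\{0\}$ to kill an ``excess'' projection, then conclude by uniqueness of the polar decomposition via \cite[Propositions 21.1 and 21.3]{BerBaerStarRings}. The difference is where the bookkeeping happens. The paper first establishes that $(A_2(e),\bullet_e,*_e)$ is a Rickart C$^*$-algebra (via \cite[Proposition 5.6]{BerBaerStarRings} on corners), computes the range projection $q$ of $x$ \emph{inside} $A_2(e)$, and shows $e-q=0$; it then identifies $|x|=e^*x$ through the power identity $\left((e^*x)^*(e^*x)\right)^n=(x^*x)^n$ and functional calculus. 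You instead work with the ARP of $x$ in $A$ itself, transport the excess $S=e^*e-RP(x)$ into $A_2(e)\cap A_0(e)$ as $eS$ (which is in fact the same element as the paper's $e-q$ under the isomorphism $z\mapsto e^*z$), and get $|x|=e^*x$ immediately from $x^*x=h^2$ with $h=e^*x\geq0$. Your route is slightly more economical --- it needs neither the Rickart property of the corner nor the range projection computed relative to $A_2(e)$ --- while the paper's version has the side benefit of exhibiting $A_2(e)$ as a Rickart C$^*$-algebra along the way. The only steps you leave implicit, namely that $RP(x)$ is the smallest projection $p$ with $xp=x$ and that $RP(|x|)=RP(x^*x)=RP(x)$, are standard facts from \cite[\S 5]{BerBaerStarRings} and pose no gap.
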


\begin{proof} The implication $(a)\Rightarrow (b)$ has been proved in the proof of  Corollary \ref{c range tripotents in weakly Rickart C* algebras}.\smallskip

$(b)\Rightarrow (a)$ Since $e$ is a partial isometry, the elements $ee^*$ and $e^* e$ are projections in $A$. It is known that $ee^* A ee^*$ and $e^* e A e^*e$ are Rickart C$^*$-algebras (cf. \cite[Proposition 5.6]{BerBaerStarRings}). Since the mapping $z\mapsto z e^*$ (respectively, $z\mapsto e^* z$) is a C$^*$-isomorphism from $(A_2(e), \bullet_e, *_e)$ onto $ee^* A e e^*$ (respectively, $e^* e A e^*e$), we derive that $(A_2(e), \bullet_e, *_e)$ is a Rickart C$^*$-algebra. \smallskip

We shall next show that the left and right projections of $x$ in $A_2(e)$ both coincide with $e$. Since $x$ is positive in $A_2(e),$ we have $RP_{A_2(e)} (x)= LP_{A_2(e)} (x) =q.$ Clearly $q\leq e$ in $A_2(e).$ If $q<e,$ the partial isometry (projection in $A_2(e)$) $e-q$ is orthogonal to $q$ in $A_2(e)$ and also in $A,$ because orthogonality in $A$ can be given in terms of the triple product $\{a,b,c\} = \frac12 ( ab^* c + c b^* a)$ and $A_2(e)$ is closed for this triple product (see section \ref{sec: JB*-triples} for additional details). When the triple product is computed with respect to the C$^*$-product of $A_2(e)$  and with respect to the one in $A$ we have $$x = \{q, x, q\}_{A_2(e)} = q \bullet_e x^{*_e} \bullet_e q =  q e^* e x^* e e^* q = q x^* q = \{q,x,q\}.$$  It follows that $x$ belongs to $A_2(q)$, which combined with the fact $e-q\perp q,$ implies that $x\perp e-q.$ It follows from the hypotheses that $e-q\in A_0(e).$ Therefore $e-q = e\bullet_e (e-q)= e e^* (e-q)= 0,$ leading to a contradiction.\smallskip

Since $x$ is positive in $A_2(e)$, $RP_{A_2(e)} (x)= LP_{A_2(e)} (x) =e$ in this C$^*$-algebra, and the mapping $z\mapsto e^* z$ is a C$^*$-isomorphism from $(A_2(e), \bullet_e, *_e)$ onto $e^* e A e^* e$, we deduce that $e^* x$ is a positive element in $A$ with  $ e e^* = LP_{A_2(e)} (x) e^* = LP (e^* x)$. Similarly, $e^* e = e^* RP_{A_2(e)} (x) = RP (e^* x )$ (have in mind that the left and right projections of $x e^*$ and $e^* x$ do not change when computed in $A$ or in $ee^* A ee^*$ or $e^* e A e^* e,$ respectively \cite[Proposition 5.6]{BerBaerStarRings}).  Furthermore, since $$\left((e^* x)^* (e^* x) \right)^n = \left(x^* e e^* x \right)^n = (x^* x)^n, \hbox{ for all natural } n,$$ it can be deduced, via functional calculus, that $|x| = e^* x.$\smallskip

It clearly follows from the hypotheses that $x = e e^* x= e |a|$. We have seen above that $RP (e^* x ) = e^* e$ and $ e e^* = LP (e^* x)$. Therefore $e$ is the partial isometry in the polar decomposition of $x$ by uniqueness. \end{proof}

\section{Jordan counterparts of Rickart and Baer *-algebras in terms of projections}\label{subsec: algebraic Jordan Rickart and Baer}

Sh.A. Ayupov and F.N. Arzikulov developed a deep study on the notions of Rickart and Baer $^*$-rings in the setting of real Jordan algebras in the papers \cite{AyuArzi2016Rickart, AyuArzi2019Rickart, Arzi98, Arzi98typeI}. Before entering into details, we introduce the required nomenclature. \smallskip

Let $M$ be a Jordan algebra. According to the standard notation (see \cite{AyuArzi2016Rickart, PeRu2014}), the (\emph{outer}) \emph{quadratic annihilator} of a subset $\mathcal{S}\subset M$ is the set \begin{equation}\label{def quadratic annihilator} \hbox{Ann} (\mathcal{S}) = \mathcal{S}^{\perp_q} :=\{ a\in M : U_a (\mathcal{S}) = \{0\} \}.\end{equation}

The \emph{inner quadratic annihilator} of $\mathcal{S}$ is formed by the elements in the intersection of all kernels of all $U$-maps associated with elements in $S$ defined by \begin{equation}\label{def second quadratic annihilator pre} ^{\perp_q}S :=\{ a\in
	M : U_s (a) = 0 \hbox{ for all } s\in S\}.\end{equation} Let us denote $M^2 := \{ a^2 : a \in M\}$ for the set of all elements in $M$ which are the square of another element (do not confuse with the set of all elements of the form $a\circ b$ with $a, b\in M$). Clearly, each idempotent in $M$ is inside $M^2$.  We consider the following two statements:

\begin{enumerate}[$(R1)$]
	\item For each element $a\in M^2$ there exists an idempotent $e \in M$ (i.e. $e^2 = e$) such
	that $\{a\}^{\perp_q}= U_e(M)$;
	\item For each element $x\in M$ there exists an idempotent $e \in M$ such that $^{\perp_q}\{x\}\cap M^2  =  U_e (M)\cap M^2.$
\end{enumerate} 

In any Jordan algebra $M$, $(R1)$ implies $(R2)$ and both properties are equivalent when $M$ is unital and lacks of nilpotent elements (cf. \cite[Theorems 1.6 and 1.7]{AyuArzi2016Rickart}). According to \cite{AyuArzi2016Rickart, AyuArzi2019Rickart}, a Jordan algebra $M$ satisfying condition $(R1)$ (respectively, $(R2)$) is called a \emph{Rickart Jordan algebra} (respectively, an \emph{inner Rickart Jordan algebra}). That is, each Rickart Jordan algebra is an inner Rickart Jordan algebra. It should be noted here that in \cite{AyuArzi2019Rickart}\label{eq change of notation} inner Rickart Jordan algebras are called \emph{weak Rickart Jordan algebras}, however since the term weak Rickart algebra is employed in the associative setting with another meaing (for example, for an uncountable set $\Gamma$ the commutative C$^*$-algebra $\ell_{\infty, c}(\Gamma)$ of all countably supported elements of the commutative von Neumann algebra $\ell_{\infty, c}(\Gamma)$ is weak Rickart but not an inner Jordan Rickart algebra see, for example, \cite{BerBaerStarRings}), here we shall employ the term mentioned above.\smallskip

The notion of (inner) Rickart is essentially addressed to real JB-algebras. For example, the exceptional JB-algebra $H_3(\mathbb{O})$ is a Rickart Jordan algebra (cf. \cite[Proposition 3.4]{AyuArzi2016Rickart}). Moreover, for each associative Rickart $^*$-algebra $A,$ its self-adjoint part $A_{sa}$ is a Jordan algebra satisfying the $(R1)$ and $(R2)$ (cf. \cite[Proposition 1.1]{AyuArzi2016Rickart}). Reciprocally, if $A$ is an associative $^*$-algebra with proper involution and $A_{sa}$ is Rickart Jordan algebra, then $A$ is a Rickart $^*$-algebra in the usual sense (\cite[Proposition 1.3]{AyuArzi2016Rickart}).\smallskip 

Every Rickart Jordan algebra possesses a unit element and lacks of nilpotent elements, it is further known that the set of idempotents of a Rickart Jordan algebra is 
a lattice, which is not, in general, complete (see \cite[Lemma 1.4, Proposition 1.10]{AyuArzi2016Rickart}). \smallskip

There exist examples of inner Rickart Jordan algebras without unit element (cf. \cite[Remark 1 in page 32]{AyuArzi2019Rickart}). However the properties gathered in the next lemma hold:\smallskip

\begin{lemma}\label{l inner R J unit for squares}\cite[Lemma 2.3]{AyuArzi2019Rickart} Let $M$ be an inner Rickart Jordan algebra. Then the following statements hold:\begin{enumerate}[$(a)$]\item There exists an element $\11_2$ in $M$ satisfying $a\circ \11_2 =  a$ for every $a \in M^2$;
\item $M^2$ contains no non-trivial nilpotent elements.
	\end{enumerate} 
\end{lemma}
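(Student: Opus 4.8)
The plan is to read $^{\perp_q}\{x\}$ as the kernel of the quadratic operator $U_x$ and to extract both assertions directly from the defining property $(R2)$, by feeding it carefully chosen elements $x$.

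For $(a)$ I would specialise $(R2)$ to $x=0$. Since $U_0$ is the zero operator, the inner quadratic annihilator ${}^{\perp_q}\{0\}$ equals all of $M$, so $(R2)$ produces an idempotent $e\in M$ with $M^2 = {}^{\perp_q}\{0\}\cap M^2 = U_e(M)\cap M^2$, whence $M^2\subseteq U_e(M)$. Now for an idempotent $e$ one has the operator identity $U_e = 2T_e^2 - T_e$, which is the identity on the Peirce eigenspace $M_1(e)=\{x\in M : e\circ x = x\}$ and vanishes on the complementary Peirce spaces; hence $U_e(M)=M_1(e)$ and $e$ is a unit for this subspace. Setting $\11_2:=e$ yields $a\circ \11_2 = a$ for every $a\in M^2\subseteq M_1(e)$, which is $(a)$.

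For $(b)$ the first step is a reduction to square-zero elements: if $a\in M^2$ is nilpotent, the chain $a, a^2, a^4,\dots$ stays inside $M^2$ (each term is the square of the previous one) and reaches $0$, so its last nonzero term $c$ satisfies $c\in M^2$, $c\neq 0$ and $c^2=0$; it therefore suffices to rule out such $c$. Using $(a)$ together with $c,c^2\in M^2$ I would compute $U_c(\11_2)=2(c\circ\11_2)\circ c - c^2\circ\11_2 = 2c^2-c^2 = c^2 = 0$ and $U_c(c)=c^3=0$, so both $\11_2$ and $c$ lie in ${}^{\perp_q}\{c\}\cap M^2$. Feeding $x=c$ into $(R2)$ gives an idempotent $e$ with ${}^{\perp_q}\{c\}\cap M^2 = U_e(M)\cap M^2$; since $\11_2$ lies in this set we get $e\circ\11_2=\11_2$, while $(a)$ applied to $e\in M^2$ gives $e\circ\11_2 = e$, so $e=\11_2$. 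Consequently $U_c$ annihilates all of $M^2$, and since $z^2,w^2,(z+w)^2\in M^2$ are all killed by $U_c$, expanding $(z+w)^2$ upgrades this to $U_c(z\circ w)=0$ for all $z,w$; as $U_c=2T_c^2$ here, this reads $c\circ\big(c\circ(z\circ w)\big)=0$.

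The main obstacle is the final passage from this quadratic information to the linear conclusion $c=0$. The square-zero relations derived so far are by themselves insufficient: any nil associative algebra with nonzero squares satisfies all of them with $c\neq 0$, the catch being that such an algebra has no nonzero idempotents and hence fails $(R2)$ in the very first step. Thus the finish must use the abundance of idempotents granted by inner Rickartness in an essential way. Here I would look to manufacture, by applying $(R2)$ to a suitable auxiliary element built from a square root $c=b^2$, a genuine nonzero idempotent $p$ with $p\circ c = c$ for which $c$ becomes invertible, equivalently positive, in the unital Peirce-$2$ algebra $M_1(p)$ — a range/support idempotent in the spirit of the polar decomposition used earlier for weakly Rickart C$^*$-algebras — thereby contradicting $c^2=0$. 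Securing the existence of this idempotent for a hypothetical nonzero nilpotent square is, I expect, the crux of the argument.
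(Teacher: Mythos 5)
The paper itself offers no proof of this lemma --- it is imported verbatim from \cite[Lemma 2.3]{AyuArzi2019Rickart} --- so your attempt has to stand on its own. Part $(a)$ is correct and complete: applying $(R2)$ to $x=0$ gives an idempotent $e$ with $M^2=U_e(M)\cap M^2$, hence $M^2\subseteq U_e(M)=M_1(e)$, and since $T_e$ acts as the identity on the Peirce eigenspace $M_1(e)$ you may take $\11_2:=e$. The opening of $(b)$ is also sound: the reduction to a square-zero $c\in M^2$ via the chain $a,a^2,a^4,\dots$, the verification that $\11_2$ and $c$ lie in $^{\perp_q}\{c\}\cap M^2$, the identification of the idempotent supplied by $(R2)$ for $x=c$ with $\11_2$ (using $(a)$ twice), and the resulting conclusion that $U_c$ annihilates $M^2$, hence $M\circ M$ by polarization, are all correct.

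The problem is the final step, which you flag yourself and which is not a detail but the entire content of $(b)$: from $U_c=2T_c^2$ vanishing on $M\circ M$ one cannot conclude $c=0$ --- this only says $c$ is an absolute zero divisor on $M\circ M$, a relation satisfied by nonzero elements in plenty of degenerate Jordan algebras --- so the inner Rickart hypothesis must be invoked once more in an essential way, and your proposal does not say how. The exit you sketch is not viable as stated: an abstract Jordan algebra has no cone of positive elements, so ``positive in $U_p(M)$'' is meaningless here; and no idempotent $p$ can make $c$ invertible in the unital Jordan algebra $U_p(M)$, because $U_c(c)=c^3=0$ together with bijectivity of $U_c$ on $U_p(M)\ni c$ would force $c=0$ at once --- producing such a $p$ is therefore exactly as hard as, indeed equivalent to, the statement you are trying to prove, so the analogy with range projections and polar decomposition gives no traction in this order-free setting. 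In short: $(a)$ and the reductions in $(b)$ are fine, but the decisive step of $(b)$ is a genuine gap.
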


The element $\11_2$ given in the above statement $(a)$ is a unit for those elements in $M^2$. If $M$ is generated by square elements (i.e., every element is a finite linear combination of elements in $M^2$), then the element $\11_2$ actually is a unit in $M$.\smallskip

\begin{corollary}\label{c inner Rickart are Rickart for positively generated with no nilpotents}{\rm \cite[Theorems 1.6 and 1.7]{AyuArzi2016Rickart}} Suppose $M$ is a Jordan algebra linearly generated by $M^2$ and containing no non-trivial nilpotent elements. Then $M$ is a Rickart Jordan algebra if and only if it is an inner Rickart Jordan algebra. 
\end{corollary}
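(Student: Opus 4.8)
The plan is to notice that one implication comes for free and that the entire content lies in reducing the converse to the unital case already treated in \cite[Theorems 1.6 and 1.7]{AyuArzi2016Rickart}. The passage preceding the statement of $(R1)$ and $(R2)$ records that $(R1)\Rightarrow(R2)$ holds in \emph{every} Jordan algebra; thus the implication ``$M$ is a Rickart Jordan algebra $\Rightarrow$ $M$ is an inner Rickart Jordan algebra'' needs no hypotheses and may simply be quoted. So the whole effort goes into the reverse implication under the two standing assumptions.

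For that converse I would assume $M$ satisfies $(R2)$ and first promote $M$ to a unital algebra. By Lemma \ref{l inner R J unit for squares}$(a)$ there is an element $\11_2\in M$ with $a\circ \11_2 = a$ for every $a\in M^2$. The key step is to upgrade $\11_2$ to a genuine unit of $M$: by hypothesis every $x\in M$ is a finite linear combination $x=\sum_i \lambda_i a_i$ with $a_i\in M^2$, and since the Jordan multiplication operator $T_{\11_2}$ is linear, $x\circ \11_2 = \sum_i \lambda_i (a_i\circ \11_2) = \sum_i \lambda_i a_i = x$. Hence $\11_2$ is a unit for all of $M$, so $M$ is unital. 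This is exactly the remark following Lemma \ref{l inner R J unit for squares}, and it is the only place where the assumption that $M$ is linearly generated by $M^2$ is used.

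With unitality secured, the remaining hypothesis---that $M$ has no non-trivial nilpotent elements---places us precisely in the regime where $(R1)$ and $(R2)$ are equivalent, namely the equivalence recorded right after the definitions and attributed to \cite[Theorems 1.6 and 1.7]{AyuArzi2016Rickart}. Applying that equivalence to the now-unital, nilpotent-free algebra $M$ yields $(R1)$, i.e. $M$ is a Rickart Jordan algebra, which closes the converse and hence the ``if and only if''.

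I do not anticipate a genuine obstacle: the heavy machinery lives in the cited Theorems 1.6 and 1.7, and this corollary is essentially a repackaging that dispenses with the explicit unitality assumption. The one point deserving care is the transition from ``$\11_2$ is a unit for $M^2$'' to ``$\11_2$ is a unit for $M$'', which, although immediate from linearity together with the generation hypothesis, is exactly the step that makes both hypotheses pull their weight and should therefore be written out rather than taken for granted.
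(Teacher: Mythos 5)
Your proposal is correct and takes essentially the same route as the paper, which presents this corollary without a written proof precisely because it follows from the three ingredients you assemble: the unconditional implication $(R1)\Rightarrow(R2)$, the promotion of $\11_2$ to a unit of $M$ via linearity and the hypothesis that $M$ is linearly generated by $M^2$ (the remark following Lemma \ref{l inner R J unit for squares}), and the cited equivalence of $(R1)$ and $(R2)$ for unital Jordan algebras without nilpotent elements. Your decision to write out the passage from ``unit for $M^2$'' to ``unit for $M$'' is exactly the step the paper leaves as a remark.
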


The lacking of associativity in Jordan algebras is somehow compensated with the celebrated \emph{Macdonald's theorem} asserting that if $G$ is a multiplication operator in two variables $x$, $y$ with $G(a, b) = 0$ for all $a, b$ in all special Jordan algebras, then  $G = 0$ in all Jordan algebras, equivalently, any polynomial identity in three variables, with degree at most $1$ in the third variable, and which holds in all special Jordan algebras, holds in all Jordan algebras (cf. \cite[Theorem 2.4.13]{HOS}). The following identities, which hold true for any Jordan algebra $M$, can be directly deduced from Macdonald's theorem:
\begin{equation}\label{eq Ta composed with U}
	2 T_{a^l}  U_{a^m, a^n} = 2 U_{a^m, a^n} T_{a^l} = U_{a^{m+l},a^n} + U_{a^m, a^{n+l}}, 
\end{equation}
\begin{equation}\label{eq U_a powers}
	U_{a}^n = U_{a^n}, 
\end{equation} for every natural numbers $l,m,n$ (see \cite[Lemma 2.4.21]{HOS}).\smallskip

In the set of all idempotents in a Jordan algebra $M$ we can consider a partial order defined by $e\leq f$ if $e\circ f = e$. The following equivalences can be easily checked by applying \eqref{eq Ta composed with U} and \eqref{eq U_a powers}:
\begin{equation}\label{eq equivalences of the partial order} e\leq f \Leftrightarrow e \in U_f (M) \Leftrightarrow U_e(M) \subseteq U_f (M). 
\end{equation}

A Jordan algebra $M$ is called a \emph{Baer Jordan algebra} if it satisfies the following property: For each subset $\mathcal{S}\subset M^2$ there exists an idempotent $e \in M$ such that $\mathcal{S}^{\perp_q}= U_e(M)$. We say that $M$ is an \emph{inner Baer Jordan algebra} if for each subset $\mathcal{S}\subset M$ there exists an idempotent $e \in M$ such that $^{\perp_q}\mathcal{S}\cap M^2  =  U_e(M)\cap M^2.$\smallskip

Let us observe that in \cite{AyuArzi2016Rickart, AyuArzi2019Rickart, Arzi98, Arzi98typeI, Arzi99} inner Baer Jordan algebras are called \emph{weak Baer Jordan algebras}, which is a term not completely compatible with the notation in the associative setting.\smallskip

Each Baer Jordan algebra is an inner Baer Jordan algebra \cite[Theorem 2.6]{AyuArzi2016Rickart} or \cite[Proposition 3.1]{AyuArzi2019Rickart}. If $M$ is a Jordan algebra containing no nilpotent elements, then $M$ is an inner Baer Jordan algebra if and only if it is a Baer Jordan algebra \cite[Theorem 2.6]{AyuArzi2016Rickart}. As we have seen in the comments after Lemma \ref{l inner R J unit for squares}, if a Jordan algebra $M$ is linearly generated by elements in $M^2$ and $M$ is an inner Baer Jordan algebra, then $M$ is unital. A C$^*$-algebra is a Baer C$^*$-algebra if and only if $A_{sa}$ is a Baer Jordan algebra (cf. \cite[Propositions 2.1 and 2.3]{AyuArzi2016Rickart} or \cite{AyuArzi2019Rickart}).\smallskip

To conclude our tour through the algebraic Jordan alter-egos of Rickart and Baer algebras, we appeal to a couple of results also proved by  Sh.A. Ayupov and F.N. Arzikulov where they establish that a Jordan algebra $M$ is a Baer Jordan algebra if, and only if, it is a Rickart Jordan algebra and the set of all idempotents in $M$ is a complete lattice (see \cite[Theorem 2.7]{AyuArzi2016Rickart}); moreover, $M$ is an inner Baer Jordan algebra if, and only if, it is an inner Rickart Jordan algebra and the set of all idempotents of $M$ is a complete lattice (cf. \cite[Theorem 3.5]{AyuArzi2019Rickart}).\smallskip


Following \cite{AyuArzi2016Rickart, AyuArzi2019Rickart, Arzi98, Arzi98typeI}, and in coherence with the terminology of C$^*$-algebras, (\emph{inner}) \emph{Rickart JB-algebras} and (\emph{inner}) \emph{Baer JB-algebras} or \emph{AJBW-algebras} are defined as those JB-algebras which are (\emph{inner}) Rickart and (\emph{inner}) Baer Jordan algebras, respectively. We shall also deal with the complex structures. A JB$^*$-algebra $M$ will be called a \emph{Rickart JB$^*$-algebra} (respectively, a \emph{Baer JB$^*$-algebra} or an {AJBW$^*$-algebra}) if it self-adjoint part, $M_{sa},$ is a Rickart JB-algebra (respectively, a {Baer JB-algebra} or an {AJBW-algebra}). That is, $M$ is a Rickart JB$^*$-algebra if and only if for each $a\in M^+$ there exists a projection $p \in M$ such that $$\{a\}^{\perp_q} \cap M_{sa} = U_p(M)\cap M_{sa} = Q_p(M)\cap M_{sa};$$ which by Corollary \ref{c inner Rickart are Rickart for positively generated with no nilpotents} is equivalent to prove that for each $x\in M_{sa}$ there exists a projection $p \in M$ such that $$^{\perp_q}\{x\}\cap M^+  =  U_p (M)\cap M^+ = Q(p) (M)\cap M^+ .$$ A similar restatement can be applied to the definition of Baer JB$^*$-algebras. A JBW$^*$-algebra (respectively, a JBW-algebra) is a JB$^*$-algebra (respectively, a JB-algebra) which is a dual Banach space. It is known that a JB$^*$-algebra $M$ is a JBW$^*$-algebra if, and only if, $M_{sa}$ is a JBW-algebra (cf., for example, \cite[Corollary 2.12]{MarPe2000}). \smallskip

Two elements $a,b$ in a Jordan algebra $A$ are said to operator commute if $$ a\circ (b\circ x)=(a\circ x)\circ b$$ for every $x\in A.$ By the mentioned \emph{Macdonald's theorem} or by the \emph{Shirshov-Cohn theorem} \cite[Theorem 2.4.14]{HOS}, it can be easily checked that operator commutativity of a couple of elements in a Jordan algebra can be equivalently verified in any Jordan subalgebra containing these elements (cf. \cite[Proposition 1]{Topping}).\smallskip

A real Jordan algebra $N$ is called \emph{formally real} if for every $a_1,\ldots, a_n\in N$ the condition $\sum_{i=1}^n a_i ^2 =0$ implies $a_1=\ldots= a_n=0 $ (see \cite[\S 2.9]{HOS}). Every JB-algebra is a formally real Jordan algebra. A Jordan subalgebra $B$ of $N$ is called \emph{strongly associative} if the identity $(x\circ y)\circ a = x\circ  (y\circ a)$ holds for all $x, a \in  B$ and $y \in N$, equivalently, any pair of elements in $B$ operator commute as elements in $N$. A family $\mathcal{F}$ of elements of $N$ is called \emph{compatible} if the Jordan subalgebra $J(\mathcal{F})$ generated by $\mathcal{F}$ is strongly associative.\smallskip

The idea behind (weakly) Rickart and Baer C$^*$-algebra is is to find a subclass of C$^*$-algebras, between general C$^*$-algebras and von Neumann algebras, in which every element can be approximated in norm by finite linear combinations of projections. In the setting of AJBW$^*$-algebras (i.e. Baer JB$^*$-algebras) this goal is achieved by the following theorem, in which Arzikulov established a Jordan version of the original result proved by Kaplansky for AW$^*$-algebras. 

\begin{theorem}\label{t AJBW-}\cite[Theorem 2.1]{Arzi98}
The following statements are equivalent for each JB-algebra $N$:
\begin{enumerate}[$(a)$]\item $N$ satisfies the following properties:
\begin{enumerate}[$(1)$]\item Every subset of pairwise orthogonal projections in a partially ordered set of projections has a least upper bound in this set;
\item  Every maximal strongly associative subalgebra of $N$ is generated by its projections (i.e., it coincides with the least closed subalgebra containing its projections);
\end{enumerate}
\item $N$ is an AJBW-algebra;
\item $N$ is an inner AJBW-algebra.
\end{enumerate}
\end{theorem}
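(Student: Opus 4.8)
The plan is to prove $(b)\Leftrightarrow(c)$ directly and then to tie both to the intrinsic conditions in $(a)$ through the existence of range projections, exploiting that every JB-algebra is formally real and hence contains no nonzero nilpotent elements and satisfies $N^2=N^+$. The equivalence $(b)\Leftrightarrow(c)$ is immediate: since $N$ has no nilpotents, the hypothesis of \cite[Theorem 2.6]{AyuArzi2016Rickart} is automatic, and that result says $N$ is a Baer Jordan algebra if and only if it is an inner Baer Jordan algebra. I would also record, for repeated use, the two reductions provided by \cite[Theorem 2.7]{AyuArzi2016Rickart} and \cite[Theorem 3.5]{AyuArzi2019Rickart}: $N$ is an AJBW-algebra (resp. an inner AJBW-algebra) exactly when it is Rickart (resp. inner Rickart) and its set of idempotents is a complete lattice. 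Thus it only remains to match $(a)$ with ``(inner) Rickart ${}+{}$ complete projection lattice.''

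For $(b)\Rightarrow(a)$, assume $N$ is an AJBW-algebra. The completeness of the idempotent lattice furnished by \cite[Theorem 2.7]{AyuArzi2016Rickart} gives $(a)(1)$, as in particular every pairwise orthogonal family of projections admits a least upper bound. For $(a)(2)$ I would fix a maximal strongly associative subalgebra $B$; being associative, $B$ is a commutative JB-algebra, and I would show it is monotone closed in $N$. Any bounded monotone net of elements of $B$ operator-commutes with all of $B$, so its supremum in $N$ (which exists because an AJBW-algebra is monotone complete) again operator-commutes with $B$ and hence lies in $B$ by maximality. Therefore $B$ is a monotone complete commutative JB-algebra, i.e. isometrically $C(K)$ for a Stonean space $K$, and such an algebra is the closed linear span of the characteristic functions of its clopen sets; that is, $B$ is generated by its projections.

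The substantial direction is $(a)\Rightarrow(c)$, for which I would use the reduction above and prove that $N$ is inner Rickart with a complete idempotent lattice. The heart of the matter is the construction of range projections. Given $x\in N$, set $a=(x^2)^{1/2}\in N^+$ and normalise $\|a\|\le 1$; the increasing sequence $(a^{1/n})_n$ obtained by functional calculus is a compatible family, hence is contained in some maximal strongly associative subalgebra $B$, which by $(a)(1)$ and $(a)(2)$ is a monotone complete $C(K)$ with $K$ Stonean. Thus $r(a):=\sup_n a^{1/n}$ exists in $B\subseteq N$ and is a projection, and it is the smallest projection of $N$ fixing $a$: any projection $q$ with $q\circ a=a$ has $a\in N_2(q)$, so $a^{1/n}\in N_2(q)$ and $q\ge a^{1/n}$ for every $n$, whence $q\ge r(a)$. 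Once a unit $\11$ of $N$ is available (the largest element of the projection lattice, equivalently the unit supplied by Lemma~\ref{l inner R J unit for squares} since $N=\mathrm{span}\,N^2$), I would complete the verification with $e:=\11-r(a)$. Writing $x=x_+-x_-$ with $x_\pm\in N^+$ and $x_+\perp x_-$, for $b\in N^2=N^+$ one has $U_x(b)=U_{x_+}(b)+U_{x_-}(b)$ with both summands positive, so $U_x(b)=0$ forces $U_{x_\pm}(b)=0$, i.e. $x_\pm\perp b$ and hence $|x|=x_++x_-\perp b$; the converse is clear. Since $b\perp|x|$ is equivalent to $b\in N_0(r(a))=N_2(e)=U_e(N)$, this yields ${}^{\perp_q}\{x\}\cap N^2=U_e(N)\cap N^2$, so $N$ is inner Rickart. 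Completeness of the lattice then follows from $(a)(1)$ together with inner Rickartness by the standard argument for Rickart $^*$-rings (orthogonal suprema plus relative complements produce all suprema), and \cite[Theorem 3.5]{AyuArzi2019Rickart} concludes that $N$ is an inner AJBW-algebra.

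I expect the main obstacle to be precisely the range-projection step in $(a)\Rightarrow(c)$: extracting from the purely local hypotheses $(a)(1)$–$(a)(2)$ that each maximal strongly associative subalgebra is a \emph{monotone complete} $C(K)$, and that the supremum $\sup_n a^{1/n}$ computed there is genuinely minimal among all projections of $N$ fixing $a$. The supporting technical points that require care are the interplay between maximality of $B$ and the normality needed to keep suprema inside $B$, the passage from orthogonal suprema to full lattice completeness, and the verification that the resulting top projection acts as a global unit so that the complement $\11-r(a)$ realising $U_e(N)=N_0(r(a))$ exists.
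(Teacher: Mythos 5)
This statement is quoted by the paper directly from Arzikulov \cite[Theorem 2.1]{Arzi98}; the paper itself contains no proof, so your attempt can only be measured against the classical Kaplansky--Arzikulov strategy. Your architecture (reduce $(b)$ and $(c)$ to ``(inner) Rickart plus complete idempotent lattice'' via \cite[Theorem 2.7]{AyuArzi2016Rickart} and \cite[Theorem 3.5]{AyuArzi2019Rickart}, then manufacture range projections inside maximal strongly associative subalgebras) is indeed the right one, but several steps as written do not go through.

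The most serious gap is in $(b)\Rightarrow(a)(2)$, where you invoke monotone completeness of an AJBW-algebra to keep suprema of bounded monotone nets inside a maximal strongly associative subalgebra $B$. This is not available: already in the associative case it is a long-standing open problem whether every AW$^*$-algebra is monotone complete (see \cite{SaitoWrightBook}), and nothing in the definition of a Baer Jordan algebra hands you order limits of general monotone nets. What is actually needed --- and what Kaplansky proves in the AW$^*$ case --- is that each maximal strongly associative subalgebra of a Baer JB-algebra is itself a Baer algebra, hence a commutative AW$^*$-algebra $C(K)$ with $K$ Stonean; that requires its own argument relating annihilators computed in $B$ to those computed in $N$, which you have not supplied. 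Even granting this, your $(a)\Rightarrow(c)$ step relies on identifying $r(a)=\sup_n a^{1/n}$ computed \emph{inside} $B$ with a least upper bound in the projection poset of $N$: from $q\geq a^{1/n}$ for all $n$ you cannot conclude $q\geq \sup_B a^{1/n}$, since suprema taken in a subalgebra are in general strictly larger than those taken in the ambient algebra. The same order-continuity issue reappears when you claim $b\perp a^{1/n}$ for all $n$ implies $b\perp r(a)$, and when you need the orthogonal suprema provided by $(a)(1)$ (which live in the projection poset of $N$) to land in $B$ so that $K$ is Stonean.

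There is also a concrete algebraic error: the identity $U_x=U_{x_+}+U_{x_-}$ is false, because $U_{x_+-x_-}=U_{x_+}+U_{x_-}-2U_{x_+,x_-}$ and the cross term does not vanish for orthogonal elements. In a special Jordan algebra $U_{a,b}(z)=\tfrac12(azb+bza)$, so for $a=e_{11}$, $b=e_{22}$, $z=e_{12}+e_{21}$ one has $a\perp b$ yet $U_{a,b}(z)=\tfrac12(e_{12}+e_{21})\neq 0$. The conclusion you want --- that $U_x(b)=0$ with $b\geq 0$ forces $b\perp|x|$ --- is nevertheless true, but must be obtained differently, e.g.\ by Shirshov--Cohn: realise $x$ and $b$ in a C$^*$-algebra, note $xbx=(b^{1/2}x)^*(b^{1/2}x)=0$ gives $bx=xb=0$, hence $b\,p(x)=0$ for every polynomial with $p(0)=0$ and in particular $bx_{\pm}=0$. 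In summary, the skeleton is sound but the proof has real holes at the points you yourself flagged as delicate, plus the incorrect $U$-operator decomposition.
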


Let $M$ be a JB$^*$-algebra. It is worth to notice that the JB$^*$-subalgebra generated by a single self-adjoint element in $M$ is strongly associative (cf. \cite[Proposition 2.4.13 and Fact 3.3.34]{Cabrera-Rodriguez-vol1}). The set of all strongly associative subalgebras of $M$ can be regarded as an inductive set when equipped with the order defined by inclusion. Therefore each strongly associative JB$^*$-subalgebra of $M$ is contained in a maximal strongly associative JB$^*$-subalgebra. It follows from Theorem \ref{t AJBW-} that every self-adjoint element in a AJBW$^*$-algebra $M$ can be approximated by finite linear combinations of projections in $M$ --actually the same conclusion holds for any element in $M$.  We shall see later that our notion of weakly Rickart JB$^*$-algebra also enjoys this property. 
\smallskip

As in the case of C$^*$-algebras, a couple of projections $p,q$ in a JB$^*$-algebra are called \emph{orthogonal} if $p\circ q =0$. Both notions are perfectly compatible in the case of a C$^*$-algebra regarded with its associative structure or as a JB$^*$-algebra.\smallskip

One of the new contributions in this note is to explore the notions of weakly Rickart and SAW$^*$-algebras in the setting of JB$^*$-algebras. In order to develop our study, we shall follow a similar method to that introduced by Ayupov and Arzikulov focused on the self-adjoint part and the lattice of projections.  In the setting of JB$^*$-algebras we cannot define properties in terms of the left or right multiplication by an element. We gather next some reinterpretations for latter purposes.

\begin{lemma}\label{l annihilator for Jordan products and for the associative product with positive elements} Let $a$ and $x$ be non-zero positive elements in a C$^*$-algebra. Then the following statements are equivalent:\begin{enumerate}[$(a)$]
		\item $a x = x;$
		\item $a\circ x = x;$
		\item $U_a(x) = x$.
\end{enumerate} Clearly, the elements $a$ and $x$ commute in case that any of the previous statements holds. 
\end{lemma}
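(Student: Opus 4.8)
The plan is to translate everything into the associative product of the ambient C$^*$-algebra $A$: expanding the definition of the $U$-operator gives $U_a(x) = 2(a\circ x)\circ a - (a\circ a)\circ x = axa$, while $a\circ x = \frac12(ax+xa)$. Thus $(a)$, $(b)$, $(c)$ read $ax=x$, $ax+xa=2x$, and $axa=x$, respectively, and the final commutativity assertion will follow automatically once we show that each condition forces $ax=xa=x$. The implications coming out of $(a)$ are immediate: if $ax=x$ then, taking adjoints (recall $a=a^*$ and $x=x^*$), we get $xa=x$, so $a$ and $x$ commute and both $(b)$ and $(c)$ hold. Hence the whole statement reduces to the two implications $(b)\Rightarrow(a)$ and $(c)\Rightarrow(a)$.

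For $(b)\Rightarrow(a)$ I would start from $ax+xa=2x$ and multiply this identity once on the right and once on the left by $x$; subtracting the two resulting equations cancels the common term $xax$ and leaves $ax^2 - x^2 a = 0$, that is, $a$ commutes with $x^2$. Since $x\ge 0$ we have $x=g(x^2)$ with $g(t)=\sqrt{t}$, and as $g$ is a uniform limit of polynomials on the compact set $\sigma(x^2)$, commutation with the self-adjoint element $x^2$ upgrades to commutation with $x$ by continuous functional calculus. Then $a\circ x = ax = x$, which is $(a)$.

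The genuinely delicate step, and the one I expect to be the main obstacle, is $(c)\Rightarrow(a)$: deducing $ax=x$ from $axa=x$ for positive $a,x$. This is subtle precisely because the value $1$ may lie in the \emph{continuous} spectrum of $a$, so one cannot simply split off an eigenprojection. My plan is to argue inside the von Neumann algebra $A^{**}$ using the spectral projections of $a$. For $\epsilon>0$ set $e_\epsilon = \chi_{[1+\epsilon,\|a\|]}(a)$; since $e_\epsilon$ commutes with $a$, compressing $axa=x$ by $e_\epsilon$ yields $a_\epsilon\, w\, a_\epsilon = w$, where $w=e_\epsilon x e_\epsilon\ge 0$ and $a_\epsilon = a e_\epsilon$ is invertible in the corner $e_\epsilon A^{**} e_\epsilon$ with $\|a_\epsilon^{-1}\|\le (1+\epsilon)^{-1}$. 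Rewriting $w = a_\epsilon^{-1} w a_\epsilon^{-1}$ forces $\|w\|\le (1+\epsilon)^{-2}\|w\|$, hence $w=0$, and since $x\ge 0$ this gives $x e_\epsilon=0$. The symmetric estimate applied to $\chi_{[0,1-\epsilon]}(a)$, where instead $\|a_\epsilon\|\le 1-\epsilon$, annihilates the part of $x$ below $1$. Letting $\epsilon\downarrow 0$ shows $x\,(\mathbf{1}-p)=0$ with $p=\chi_{\{1\}}(a)$, so $x=pxp$; as $ap=p$ we conclude $ax = (ap)xp = pxp = x$. This closes the cycle of equivalences and, together with the adjoint computation in $(a)$, establishes that $a$ and $x$ commute whenever any of the three conditions holds.
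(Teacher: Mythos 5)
Your proof is correct, but both nontrivial implications are handled by routes genuinely different from the paper's. For $(b)\Rightarrow(a)$ the paper unitizes and invokes the orthogonality lemma \cite[Lemma 4.1]{BurFerGarPe09} applied to $(\mathbf{1}-a)\circ x=0$ to conclude $(\mathbf{1}-a)x=x(\mathbf{1}-a)=0$; your argument is more elementary and self-contained: multiplying $ax+xa=2x$ by $x$ on each side and subtracting gives $ax^2=x^2a$, and commutation with $x=(x^2)^{1/2}$ then follows from functional calculus, so no external lemma is needed. For $(c)\Rightarrow(a)$ the paper also works in $A^{**}$ but localizes at $x$: it shows $a\,r(x)\,a=r(x)$ for the range projection $r(x)$, deduces $\bigl(r(x)\,a\,r(x)\bigr)^2=r(x)$ and hence $r(x)\,a\,r(x)=r(x)$ by positivity and Gelfand theory, and combines the two identities to get $a\,r(x)=r(x)$ and finally $ax=x$. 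You instead localize at $a$: compressing $axa=x$ by the spectral projections $\chi_{[1+\epsilon,\|a\|]}(a)$ and $\chi_{[0,1-\epsilon]}(a)$ and using the invertibility/contraction estimates in the respective corners kills $e_\epsilon x e_\epsilon$ (positivity of $x$ then gives $xe_\epsilon=0$), so that in the limit $x=pxp$ for $p=\chi_{\{1\}}(a)$ and $ax=x$ follows from $ap=p$. Both are legitimate bidual arguments of comparable length; yours has the minor virtue of treating the whole of $(c)\Rightarrow(a)$ uniformly (the paper first does the case $\|a\|\le 1$ separately via $a^n\to s(a)$ in the strong$^*$ topology), while the paper's version stays closer to the range-projection machinery it reuses throughout the rest of the article.
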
 

\begin{proof} $(a)\Rightarrow (b)$ and $(c)$. This is clear because $x a = (a x)^* = x^* = x$, and thus $a\circ x = \frac12 ( a x + x a) = x$. \smallskip
	
Similarly, $U_a (x) = a x a = x a = x.$\smallskip

$(b) \Rightarrow (a)$ We can clearly embed $A$ inside its unitization, and thus assume that $A$ is unital. Since $(1-a) \circ x = 0$ with $1-a\in A_{sa}$ and $x\geq 0$, \cite[Lemma 4.1]{BurFerGarPe09} implies that $x\perp (1-a)$ in $A$ (as JB$^*$- and as C$^*$-algebra), then $(1-a) x =0 = x (1-a)$, which proves $(a)$.\smallskip

$(c) \Rightarrow (a)$ If $\|a\|\leq 1$ the proof is much easier. First, the inequality $\|x\| =\|U_{a} (x)\| \leq \|a\|^{2} \ \|x\|$ assures that $\|a\| =1$. We can deduce from a simple induction argument that $U_{a^n} (x) = a^n x a^n = x$ for all natural $n$. Now, by applying that the sequence $(a^n)_n$ converges in the strong$^*$ topology of $A^{**}$ to the support projection, $s(a)$, of $a$, together with the join strong$^*$ continuity of the product of $A^{**}$ \cite[Proposition 1.8.12]{Sa}, we obtain $s(a) x s(a) =x$. Finally, since $a = s(a) + (1-s(a)) a = s(a) + a (1-s(a))$ in $A^{**}$, it follows that $a x = s(a) x + a (\11-s(a)) x = x$.\smallskip

For the general case we assume that $a x a =x.$ Since the same identity holds in $A^{**}$, it is easy to check that $a z a = z$ for every $z$ in the C$^*$-subalgebra of $A$ generated by $x$ (and also in the von Neumann subalgebra of $A^{**}$ generated by $x$). Therefore, the identity $a\ r(x)\ a = r(x)$ holds in $A^{**}.$ It is easy to deduce from the above that $$ (r(x)\ a\ r(x)) \ ( r(x)\ a\ r(x)) = r(x).$$ Having in mind that $r(x)\ a\ r(x)$ is a positive element with $r(x)\ a\ r(x) \leq \|a\| \ r(x)$ whose square is $r(x)$, a simple application of the local Gelfand theory proves that $ r(x)\ a\ r(x) =r(x).$ \smallskip

Now by mixing the identities $a\ r(x)\ a = r(x)$ and $ r(x)\ a\ r(x) =r(x)$ we get $$a\ r(x) =(a \  r(x)\ a)\ r(x) = r(x),\hbox{ and } r(x) \ a = r(x)\ (a\ r(x)\ a)= r(x).$$  Finally, it is easy to see that $ a x = a r(x) x = r(x) x = x = x r(x) = x r(x) a=xa.$ 
\end{proof}

As in the associative setting of C$^*$-algebras, a JB$^*$-subalgebra $B$ of a JB$^*$-algebra $M$ is said to be an hereditary  JB$^*$-subalgebra of $M$ if whenever $0 \leq a\leq  b$ with $a \in  M$ and $b \in B$, then $a\in B$, equivalently, $B^+$ is a face of $M^+$  (cf.  \cite{Edw77, Bu1983, AlfsenShultz2003}).\smallskip

It is known that an hereditary C$^*$-subalgebra $B$ of C$^*$-algebra $A$ is $\sigma$-unital if and only if it has the form $B= \overline{xAx}$ for some positive $x\in A$. The same statement remains valid in tha case of a JB$^*$-algebra $M$, where each $\sigma$-unital, hereditary JB$^*$-subalgebra is of the form $\overline{U_x(M)}$, for some positive $x\in M$. \smallskip

\begin{corollary}\label{c unit for the Jordan product} Let $a$ and $x$ be positive elements in a JB$^*$-algebra $M$. Then the following statements are equivalent:\begin{enumerate}[$(a)$]
		\item $a\circ x = x;$
		\item $U_a(x) = x$;
		\item $a \circ z = z$ for all $z$ in the inner ideal of $M$ generated by $x$.
	\end{enumerate} Furthermore, if any of the previous statements holds the elements $a$ and $x$ operator commute as elements of $M,$ and $a\circ r(x) = r(x),$ where $r(x)$ denotes the range projection of $x$ in $M^{**}$. 
\end{corollary}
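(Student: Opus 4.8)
The plan is to reduce everything to the associative situation already settled in Lemma \ref{l annihilator for Jordan products and for the associative product with positive elements}. We may assume $a$ and $x$ are nonzero, the remaining cases being trivial. Let $B$ denote the closed JB$^*$-subalgebra of $M$ generated by $a$ and $x$. By the Shirshov--Cohn theorem (cf. \cite[Theorem 2.4.14]{HOS}) a Jordan algebra generated by two elements is special, so $B$ is a JC$^*$-algebra and may be regarded as a norm-closed Jordan $^*$-subalgebra of a C$^*$-algebra $A$; under this identification $a$ and $x$ remain positive and the Jordan operations are the restrictions of the associative ones, $a\circ x=\frac12(ax+xa)$ and $U_a(x)=axa$. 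With this in hand the equivalence $(a)\Leftrightarrow(b)$ is immediate: Lemma \ref{l annihilator for Jordan products and for the associative product with positive elements} applied to the positive elements $a,x\in A$ shows that $a\circ x=x$, $U_a(x)=x$ and $ax=x$ are mutually equivalent, and that $a$ and $x$ commute in $A$. Since $B$ is special, this associative commutation yields operator commutativity of $a$ and $x$ inside $B$, and because operator commutativity may be checked in any Jordan subalgebra containing the two elements (see the discussion following Macdonald's theorem, cf. \cite[Proposition 1]{Topping}), $a$ and $x$ operator commute as elements of $M$.

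Next I would establish $a\circ r(x)=r(x)$. Operator commutativity gives, by a short induction on the defining identity $a\circ(x\circ w)=(a\circ w)\circ x$, that $a\circ x^{n}=x^{n}$ for every $n$, and hence $a\circ f(x)=f(x)$ for every continuous $f$ vanishing at $0$, by norm continuity of the Jordan product together with functional calculus in the commutative C$^*$-algebra generated by $x$. Letting $f$ run through the $n$-th roots, the sequence $(x^{1/n})_n$ converges in the weak$^*$ topology of $M^{**}$ to the range projection $r(x)$; since the product of the JBW$^*$-algebra $M^{**}$ is separately weak$^*$ continuous, passing to the limit in $a\circ x^{1/n}=x^{1/n}$ yields $a\circ r(x)=r(x)$.

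For $(a)\Rightarrow(c)$ I would exploit the Peirce decomposition of $M^{**}$ relative to $p=r(x)$. Writing $a=P_2(p)a+P_1(p)a+P_0(p)a$ and comparing Peirce components in $a\circ p=p$ forces $P_2(p)a=p$ and $P_1(p)a=0$, so $a=p+P_0(p)a$. The inner ideal $M(x)$ generated by $x$ satisfies $M(x)\subseteq M^{**}_2(r(x))$, so for $z\in M(x)$ we have $p\circ z=z$ (as $p$ is the unit of $M^{**}_2(p)$) and $(P_0(p)a)\circ z=0$ (by the Peirce rule $M_0\circ M_2=0$); hence $a\circ z=z$. The reverse implication $(c)\Rightarrow(a)$ is trivial once one recalls that $x$ belongs to the inner ideal it generates, so that taking $z=x$ recovers $a\circ x=x$.

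The main obstacle I anticipate is the passage to the bidual needed for $a\circ r(x)=r(x)$: one must make sure that the range projection computed in $M^{**}$ is indeed the weak$^*$ limit of the roots $x^{1/n}$, and that the operator commutativity established in the norm-closed subalgebra $B$ remains compatible with the Peirce calculus in $M^{**}$. Everything else is a routine transfer through the Shirshov--Cohn embedding; the one point genuinely demanding care is that this embedding is available precisely because only two elements are involved, so the argument relies essentially on the two-generated case of speciality.
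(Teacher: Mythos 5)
Your proposal is correct, and for most of the statement it follows the paper's own route: the Shirshov--Cohn reduction to a C$^*$-algebra containing the JB$^*$-subalgebra generated by $a$ and $x$, the use of Lemma \ref{l annihilator for Jordan products and for the associative product with positive elements} for $(a)\Leftrightarrow(b)$, and the appeal to \cite[Proposition 1]{Topping} for operator commutativity are exactly the paper's steps. Where you genuinely diverge is in $(a)\Rightarrow(c)$. The paper stays inside $M$: it computes, via the Jordan identity, that $U_a U_x = U_x$, hence $U_a(z)=z$ for every $z\in M(x)=\overline{U_x(M)}$, and then converts this to $a\circ z=z$ by applying the already-established equivalence $(a)\Leftrightarrow(b)$ to positive elements of $M(x)$ and writing a general $z$ as a combination of four positives. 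You instead pass to $M^{**}$: you first prove $a\circ r(x)=r(x)$ as a weak$^*$ limit of the identities $a\circ x^{1/n}=x^{1/n}$ (after normalising $\|x\|\leq 1$, and using separate weak$^*$ continuity of the product of $M^{**}$), then read off $a=r(x)+P_0(r(x))(a)$ by comparing Peirce components in $a\circ r(x)=r(x)$, so that $a\circ z=z$ for $z\in M(x)\subseteq M^{**}_2(r(x))$ follows from the Peirce rule $M_2(p)\circ M_0(p)=\{0\}$. Both routes are sound; your Peirce computation ($a\circ p = P_2(p)a+\tfrac12 P_1(p)a$, forcing $P_2(p)a=p$ and $P_1(p)a=0$) is correct. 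The paper's argument buys a proof that never leaves $M$ and needs no bidual machinery for this implication; yours buys an explicit proof of the ``furthermore'' claim $a\circ r(x)=r(x)$, which the paper's written proof actually omits (it is recoverable from $(c)$ by separate weak$^*$ continuity, since $r(x)$ lies in the weak$^*$ closure of $M(x)$), so on that point your write-up is the more complete of the two.
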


\begin{proof} By Macdonald's theorem (see also the Shirshov-Cohn theorem in \cite{HOS} or \cite[Corollary 2.2]{Wright77}), there exists a C$^*$-algebra $A$ containing the JB$^*$-subalgebra of $M$ generated by $a$ and $x$ as JB$^*$-subalgebra. Lemma \ref{l annihilator for Jordan products and for the associative product with positive elements} proves that $(a)$ is equivalent to $(b)$ in $A,$ and hence in $M$. Since $a x = x a = x$ in $A$, \cite[Proposition 1]{Topping} assures that $a$ and $x$ operator commute in $M$.\smallskip
	
The implication $(c)\Rightarrow (a)$ is clear because $x\in M(x)$. 	To see the implication $(a)\Rightarrow (c)$, we recall that $(a)$ implies that $a$ and $x$ operator commute in $M$ and $a x = x a = x$ in $A$ (cf. Lemma \ref{l annihilator for Jordan products and for the associative product with positive elements}). Then  $$\{a,x,z\} = (a\circ x) \circ z + (z\circ x) \circ a - (a\circ z) \circ x = x\circ z \ \ (x\in M),$$ and thus, by the Jordan identity, we get
$$\begin{aligned}
U_a U_x (y) &= \{a,\{x,y^*,x\},a\} = -\{y^*,x,\{a,x,a\}\} + 2 \{\{y^*,x,a\},x,a\} \\
&= -\{y^*,x,x\} + 2 (x\circ y^*) \circ x\\
&=-  (x\circ y^*) \circ x -x^2\circ y^* +(x\circ y^*) \circ x + 2 (x\circ y^*) \circ x\\
&= \{x,y^*,x\}=U_x (y),
\end{aligned}
$$ for all $y\in M$.  This shows that $U_a(z)=z$ for every $z\in M(x)$. Now take $z\in M(x)$ positive, then, by the equivalence $(a)\Leftrightarrow (b)$, $U_a(z)=z$ gives $a\circ z=z$. Since, each $z\in M(x)$ writes as a linear combination of four positive elements in $M(x)$, we have $a\circ z=z$.
\end{proof}

The weak versions of Rickart and Baer Jordan algebras in the classical sense considered in Berberian's book \cite{BerBaerStarRings} have not been considered yet.  The reader should be warned that, in order to work in the Jordan setting, the left and right multiplication operations do not make too much sense in a Jordan algebra. 

\begin{definition}\label{def weak Rickart} Let $N$ be a JB-algebra.
\begin{enumerate}[$\checkmark$]\item We shall say that $N$ is a {weakly Rickart JB-algebra} if for each element $a\in N^+$ there exists a projection $p \in N$ such that $p\circ a = a,$ and for each $z\in N$ with  $U_z(a) =0$ we have $p\circ z =0$. 
		
\item $N$ is called a \emph{weakly inner Rickart JB-algebra} if for each element $x\in N$ there exists a projection $p \in N$ such that $p\circ x = x,$ and for each $z\in N^+$ with $U_x(z )= 0$ we have $p\circ z =0$.
\end{enumerate}
\end{definition}

A JB$^*$-algebra $M$ will be called weakly Rickart or weakly inner Rickart if its self-adjoint part satisfies the same property.\smallskip

\begin{remark}\label{r existence and uniqueness of the range projection in wRickart JBalgebras}{\rm Let $N$ be a weakly Rickart JB-algebra. Then, for each $a\in N^+$, the projection $p$ in Definition \ref{def weak Rickart} is unique. This projection will be called the \emph{range projection} of $a$ in $N$ (RP$_{N} (a)=$RP$(a)$ in short).  Indeed, suppose that there exist projections $p, p'$ in $N$ such that 
$$p\circ a=p'\circ a=a,$$
and for any $z\in N$ with $U_z(a)=0$ we have $p\circ z=p'\circ z=0$. It follows from the original assumptions that $(p-p')\circ a=0$, and since $a\geq 0$, we deduce from \eqref{eq equivalence of orthogonality and zero product with a positive factor} that $p-p' \perp a$. It then follows that $U_{(p-p')} (a) = \{p-p', a, p-p'\}=0.$ By applying the assumptions we get $p\circ (p-p') = 0 = p'\circ (p'-p),$ which implies that $p=p\circ p' = p'$.\smallskip

It can be seen that $RP_N(a)$ is the smallest projection in $N$ such that $a = p\circ a (= U_p (a)).$ Namely, if $q$ is any projection in $N$ such that $q \circ a = a,$ then $(RP_N(a)-q)\circ a = 0$, and thus $RP_N(a) \circ (RP_N(a)-q) = 0$, therefore $RP_N(a) \circ q = RP_N(a)$, which is equivalent to say that $RP_N(a)\leq q.$}
\end{remark}

\begin{lemma}\label{l weak Rickart and weak Baer} Let $N$ be a JB-algebra. Then $N$ is weakly Rickart and unital if, and only if, it is a Rickart JB-algebra if, and only if, it is weakly inner Rickart and unital.
\end{lemma}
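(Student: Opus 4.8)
The plan is to factor the three-way equivalence through the purely algebraic Rickart conditions $(R1)$ and $(R2)$ and then close the loop with Corollary \ref{c inner Rickart are Rickart for positively generated with no nilpotents}. The first thing I would record is that every JB-algebra $N$ is formally real, hence free of non-zero nilpotents, and that $N$ is linearly generated by $N^2=N^+$ (each positive element is a square, and each element is a difference of two positive elements). Thus the hypotheses of Corollary \ref{c inner Rickart are Rickart for positively generated with no nilpotents} are met and $(R1)\Leftrightarrow(R2)$. Since a Rickart JB-algebra is precisely a JB-algebra satisfying $(R1)$, the whole statement reduces to two equivalences: $N$ is weakly Rickart and unital if and only if $N$ satisfies $(R1)$, and $N$ is weakly inner Rickart and unital if and only if $N$ satisfies $(R2)$.

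For the first of these I would reason as follows. Assume $N$ is unital and weakly Rickart and fix $a\in N^2=N^+$, with projection $p$ as in Definition \ref{def weak Rickart} and $e:=\mathbf{1}-p$. Since $p\circ a=a$ puts $a\in N_2(p)=N_0(e)$, the Peirce rule $\{N_0(p),N_2(p),N\}=0$ gives $U_z(a)=\{z,a,z\}=0$ for every $z\in N_0(p)=U_e(N)$, while the defining implication $U_z(a)=0\Rightarrow p\circ z=0$ supplies the reverse containment; hence $\{a\}^{\perp_q}=U_e(N)$ and $(R1)$ holds. Conversely, assuming $(R1)$ (which already forces $N$ to be unital), fix $a\in N^+$ and the projection $e$ with $\{a\}^{\perp_q}=U_e(N)=N_2(e)$, and set $p=\mathbf{1}-e$. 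The inclusion $U_z(a)=0\Rightarrow z\in N_2(e)=N_0(p)\Rightarrow p\circ z=0$ is immediate; the missing ingredient is $p\circ a=a$, i.e. $a\circ e=0$. I would obtain this from the fact that $e\in N_2(e)=\{a\}^{\perp_q}$, so $U_e(a)=0$: realising $a$ and $e$ inside a C$^*$-algebra via the Shirshov--Cohn theorem turns this into $eae=0$ with $a\geq 0$, whence $a^{1/2}e=0$, so $ae=ea=0$ and $a\circ e=0$, placing $a\in N_0(e)=N_2(p)$.

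The second equivalence is handled in the same spirit, now with a general $x\in N$. If $N$ is unital and weakly inner Rickart, the projection $p$ with $p\circ x=x$ and $e=\mathbf{1}-p$ satisfy $^{\perp_q}\{x\}\cap N^2=U_e(N)\cap N^2$: for $z\in N_0(p)=N_2(e)$ one has $U_x(z)=\{x,z,x\}=0$ by the Peirce rule $\{N_2(p),N_0(p),N\}=0$, and the defining implication yields the opposite inclusion; this is exactly $(R2)$. Conversely, $(R2)$ together with $N$ being generated by squares forces $N$ to be unital (Lemma \ref{l inner R J unit for squares} and the remark following it); given $x$ and the idempotent $e$ with $^{\perp_q}\{x\}\cap N^2=U_e(N)\cap N^2$, set $p=\mathbf{1}-e$. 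The annihilator equality together with $z\in N^+$ immediately gives $p\circ z=0$, while $p\circ x=x$ again comes from the same device: since $e\in N_2(e)\cap N^2={}^{\perp_q}\{x\}\cap N^2$ we get $U_x(e)=0$, and Shirshov--Cohn converts this into $xex=0$ with $e$ a projection, so $(ex)^*(ex)=xe^2x=xex=0$, giving $ex=0$, hence $x\circ e=0$ and $x\in N_2(p)$.

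The routine parts are the two Peirce-rule inclusions and the bookkeeping with $e=\mathbf{1}-p$; the only genuine difficulty is recovering the support identity $p\circ a=a$ (respectively $p\circ x=x$) from the bare equality of quadratic annihilators. The key that unlocks it is that the idempotent $e$ always lies in its own Peirce-$2$ space, hence in the annihilator, so that $U_e(a)=0$ (respectively $U_x(e)=0$); reducing these quadratic identities to the associative computations $eae=0$ and $xex=0$ via the Shirshov--Cohn theorem then delivers orthogonality of $a$ (respectively $x$) to $e$. I expect this passage to a special (C$^*$) envelope to be the technical heart of the argument.
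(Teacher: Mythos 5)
Your proof is correct and follows essentially the same route as the paper: both arguments translate the unital weakly (inner) Rickart conditions into $(R1)$ and $(R2)$ via the Peirce decomposition of the projection $p$ and its complement $\mathbf{1}-p$, and then invoke Corollary \ref{c inner Rickart are Rickart for positively generated with no nilpotents} to identify the two algebraic conditions for JB-algebras. The only difference is that you spell out the converse implications (Rickart $\Rightarrow$ unital weakly (inner) Rickart), including the Shirshov--Cohn reduction of $U_e(a)=0$ to $ae=0$, which the paper simply asserts as an observation in its closing line.
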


\begin{proof} Suppose $N$ is a unital weakly Rickart JB-algebra with unit $\textbf{1}$. Let us fix $a\in N^+$. By assumptions there exists a projection $p$ in $N$ such that $a\circ p = a$ and for each $z\in N$ with $U_z(a) =0$ we have $p\circ z= 0$. Given $x\in \{a\}^{\perp_q}$ we have $U_x (a) =0$, and thus $p\circ x = 0$, in particular $(\textbf{1}-p)\circ x = x$. We have shown that $\{a\}^{\perp_q}\subseteq U_{\textbf{1}-p}(N) = N_2(\textbf{1}-p)= N_0(p)$. Conversely, if $x\in U_{\textbf{1}-p}(N)= N_2(\textbf{1}-p)= N_0(p)$, since $p\circ a = a$, we deduce that $a\in N_2(p),$ and consequently, $U_x (a) = 0,$ by Peirce arithmetic.  \smallskip
	
Suppose now that $N$ is a unital weakly inner Rickart JB-algebra with unit $\textbf{1}$.	So, given $x\in M$ there exists a projection $p \in N$ such that $p\circ x = x$ and for each $z\in N^+$ with $U_x(z)=0$ we have $p\circ z= 0$. For each $z\in ^{\perp_q}\{x\}\cap N^2$ we have $U_x (z)=0$, and hence $p\circ z=0$. It follows that $^{\perp_q}\{x\}\cap N^2 \subseteq  U_{\textbf{1}-p} (N)\cap N^2$. Reciprocally, each $z\in U_{\textbf{1}-p} (N)\cap N^2$ is positive and must be orthogonal to $N_2(p)$ by Peirce arithmetic, then $z\in\  ^{\perp_q}\{x\}\cap N^2,$ because $x\in N_2(p)$. \smallskip

To conclude the proof we observe that every Rickart JB-algebra is unital and weakly (inner) Rickart. 
\end{proof}

\begin{proposition}\label{p weak Rickart condition is inherited by Peirce2 projections} Let $p$ be a projection in a weakly Rickart JB$^*$-algebra $M$. Then the Peirce-2 subspace $M_2(p)$ is a Rickart JB$^*$-algebra with unambiguous range projections of positive elements in $M_2(p)$.
\end{proposition}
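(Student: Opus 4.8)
The plan is to reduce the statement to the defining property of a weakly Rickart JB-algebra applied inside $M$ itself, transporting the range projection of a positive element from $M$ into $M_2(p)$. The first step is to record that $M_2(p) = U_p(M)$ is, with the product and involution inherited from $M$, a \emph{unital} JB$^*$-subalgebra of $M$ with unit $p$. Since $p$ is a projection one checks that $L(p,p) = T_p$, so that $M_2(p) = \{x\in M : p\circ x = x\}$, and a direct Peirce computation (using the triple product formula on a JB$^*$-algebra together with $p^*=p$) shows that the Peirce-2 product $\{x,p,y\}$ equals $x\circ y$ and that the Peirce-2 involution $\{p,x,p\}$ equals $x^*$ for $x,y\in M_2(p)$. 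Consequently the Jordan product, the involution, and in particular every quadratic operator $U_z$ with $z\in M_2(p)$, computed inside $M_2(p)$, agree with those of $M$. I would also note that a self-adjoint $a\in M_2(p)$ that is positive in $M_2(p)$, say $a = b\circ b$ with $b = b^*\in M_2(p)$, is automatically positive in $M$.

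Since $M_2(p)$ is unital, Lemma \ref{l weak Rickart and weak Baer} tells us that it suffices to prove that $(M_2(p))_{sa}$ is a weakly Rickart JB-algebra; the Rickart conclusion, and hence the assertion that $M_2(p)$ is a Rickart JB$^*$-algebra, then follows at once. So the core step is: given $a\in (M_2(p))^+$, produce a projection of $M_2(p)$ witnessing the weakly Rickart property of $a$ \emph{inside} $M_2(p)$. Because $a\in M^+$, the hypothesis on $M$ furnishes the range projection $q := RP_M(a)\in M$ with $q\circ a = a$ and with $q\circ z = 0$ for every $z\in M_{sa}$ satisfying $U_z(a) = 0$.

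The key move is to show that $q$ already lies in $M_2(p)$. Since $a\in M_2(p)$ we have $p\circ a = a$, so $p$ is one projection satisfying $q'\circ a = a$; by the minimality of the range projection recorded in Remark \ref{r existence and uniqueness of the range projection in wRickart JBalgebras} we obtain $q = RP_M(a)\leq p$, and then \eqref{eq equivalences of the partial order} gives $q\in U_p(M) = M_2(p)$. Thus $q$ is a projection of the JB$^*$-algebra $M_2(p)$. It then remains to verify that $q$ is the weakly Rickart range projection of $a$ inside $M_2(p)$: the identity $q\circ a = a$ holds verbatim, and if $z\in (M_2(p))_{sa}$ satisfies $U_z(a) = 0$ computed in $M_2(p)$, then, since $U_z$ is computed identically in $M_2(p)$ and in $M$, we have $z\in M_{sa}$ with $U_z(a) = 0$, whence $q\circ z = 0$ by the property of $RP_M(a)$. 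This exhibits $(M_2(p))_{sa}$ as weakly Rickart and, simultaneously, shows $RP_{M_2(p)}(a) = q = RP_M(a)$, which is exactly the asserted lack of ambiguity of range projections.

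The main obstacle I anticipate is the first, bookkeeping step: one must be careful that the intrinsic JB$^*$-algebra structure of the Peirce-2 space (product, involution, the operators $U_z$, and the cone of positive elements) really is the restriction of that of $M$, so that the data supplied by the weakly Rickart property of $M$ can be read off inside $M_2(p)$ without distortion. Once that identification is secured, the remainder is a short application of the minimality of the range projection, which forces $RP_M(a)$ below $p$ and hence into $M_2(p)$.
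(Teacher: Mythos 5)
Your proof is correct and follows essentially the same route as the paper: take the range projection $q=RP_M(a)$ of a positive $a\in M_2(p)$ in the ambient algebra, show $q\leq p$ so that $q\in M_2(p)$, check it serves as the range projection there, and invoke Lemma \ref{l weak Rickart and weak Baer} to pass from unital weakly Rickart to Rickart. The only cosmetic difference is that you obtain $q\leq p$ by citing the minimality statement in Remark \ref{r existence and uniqueness of the range projection in wRickart JBalgebras}, whereas the paper re-runs the same computation $(p-q)\circ a=0\Rightarrow U_{p-q}(a)=0\Rightarrow q\circ(p-q)=0$ directly.
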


\begin{proof} Let us fix a positive element $a\in M_2(p).$ Clearly, $a$ is positive in $M$. Let $q= \hbox{RP}(a)$ denote the range projection of $a$ in $M$. Since $(p-q)\circ a = 0,$ it follows from \eqref{eq equivalence of orthogonality and zero product with a positive factor} that $(p-q)\perp  a$, and thus $U_{(p-q)} (a) =0.$ Applying now that $q=\hbox{RP} (a)$ we get $q\circ (p-q) =0.$ Therefore, $p\circ q =q$ and thus $U_{p} (q) = q,$ witnessing that $q\in M_2(p)$ and satisfies the properties of a range projection for $a$ in $M_2(p).$  We have proved that $M_2(p)$ is a unital weakly Rickart JB$^*$-algebra, Lemma \ref{l weak Rickart and weak Baer} gives the rest.  
\end{proof}

Let $h$ and $x$ be two elements in a JB$^*$-algebra $M$ with $h$ positive. We know from \cite[Lemma 4.1]{BurFerGarPe09} that  \begin{equation}\label{eq equivalence of orthogonality and zero product with a positive factor} \hbox{$x \perp  h$ if, and only if, $h \circ x = 0$.}    
\end{equation}

The \emph{orthogonal annihilator} of a subset $\mathcal{S}$ in a JB$^*$-triple $E$ is defined as $$
\mathcal{S}_{_E}^\perp = \mathcal{S}^{\perp}:=\{ y \in E : y \perp x , \forall x \in \mathcal{S} \}.$$ 

The next result with the basic properties of the orthogonal annihilator has been borrowed from \cite[Lemma 3.1]{BurGarPe11StudiaTriples} and \cite[Lemma 3.2]{EdRu96}.

\begin{lemma}\label{l basic prop annihilator}{\rm(\cite[Lemma 3.2]{EdRu96}, \cite[Lemma 3.1]{BurGarPe11StudiaTriples})} Let $\mathcal{S}$ be a nonempty subset of a JB$^*$-triple $E$. Then the following statements hold: \begin{enumerate}[$(a)$]
\item $\mathcal{S}^{\perp}$ is a norm closed inner ideal of $E$;
\item $\mathcal{S}\cap \mathcal{S}^{\perp}= \{0\};$
\item $\mathcal{S}\subseteq \mathcal{S}^{\perp \perp};$
\item If $\mathcal{S}_1\subseteq \mathcal{S}_2$ then $\mathcal{S}_2^{\perp}\subseteq \mathcal{S}_1^{\perp};$
\item $\mathcal{S}^{\perp}$ is weak$^*$ closed whenever $E$ is a JBW$^*$-triple.
\end{enumerate}
\end{lemma}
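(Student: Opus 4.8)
The plan is to observe first that the whole statement reduces to singletons: directly from the definition one has $\mathcal{S}^{\perp}=\bigcap_{x\in\mathcal{S}}\{x\}^{\perp}$, so each assertion about $\mathcal{S}^{\perp}$ will follow once the corresponding property is verified for every $\{x\}^{\perp}$, together with the stability of the relevant class of sets (norm-closed subspaces, inner ideals, weak$^{*}$-closed sets) under arbitrary intersections. Throughout I would use only two elementary features of orthogonality recorded in the preliminaries: that $a\perp b$ means $L(a,b)=0$, and that this relation is symmetric, i.e. $a\perp b\Leftrightarrow b\perp a\Leftrightarrow E(a)\perp E(b)$.

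Statements $(b)$, $(c)$ and $(d)$ are then immediate. For $(d)$, if $\mathcal{S}_1\subseteq\mathcal{S}_2$ and $y\perp x$ for every $x\in\mathcal{S}_2$, then in particular $y\perp x$ for every $x\in\mathcal{S}_1$. For $(c)$, if $x\in\mathcal{S}$ and $y\in\mathcal{S}^{\perp}$ then $y\perp x$, hence by symmetry $x\perp y$; as $y$ ranges over $\mathcal{S}^{\perp}$ this says exactly $x\in\mathcal{S}^{\perp\perp}$. For $(b)$, an element $y\in\mathcal{S}\cap\mathcal{S}^{\perp}$ satisfies $y\perp y$, whence $\{y,y,y\}=0$; the geometric axiom $\|\{y,y,y\}\|=\|y\|^{3}$ then forces $y=0$.

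For $(a)$ I would check the three requirements separately. Linearity of $L$ in its first entry shows that $\{x\}^{\perp}=\{ y : L(y,x)=0 \}$ is a subspace, and the non-expansiveness of the triple product in \eqref{eq triple product is non-expansive} shows that $y\mapsto\{y,x,z\}$ is continuous, so $\{x\}^{\perp}=\bigcap_{z}\{ y : \{y,x,z\}=0 \}$ is norm closed; the set $\mathcal{S}^{\perp}$ inherits both properties by intersection. The inner-ideal property is the crux. The cleanest route I see is to pass to the bidual and identify $\{x\}^{\perp}=E\cap E^{**}_{0}(r)$, where $r=r_{E^{**}}(x)$ is the range tripotent of $x$. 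One inclusion is the Peirce rule $\{E_{0}(r),E_{2}(r),E^{**}\}=0$ combined with $x\in E^{**}_{2}(r)$; the reverse inclusion requires showing that $y\perp x$ already forces $y\perp r$, which I would obtain by writing $r$ as the weak$^{*}$-limit of the odd roots $x^{[\frac{1}{2n-1}]}\in E_x\subseteq E(x)$ and using that $y\perp E(x)$ together with the separate weak$^{*}$-continuity of the triple product (so $L(y,r)=0$, i.e. $y\in E^{**}_{0}(r)$). Since $E^{**}_{0}(r)$ is an inner ideal of $E^{**}$ and $E$ is a subtriple, $E\cap E^{**}_{0}(r)$ is an inner ideal of $E$, and an arbitrary intersection of inner ideals is again an inner ideal, which yields $(a)$.

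Finally, $(e)$ follows the same pattern: when $E$ is a JBW$^{*}$-triple its product is separately weak$^{*}$-continuous, so for fixed $x,z$ the set $\{ y : \{y,x,z\}=0 \}$ is weak$^{*}$-closed; intersecting over $z$ makes $\{x\}^{\perp}$ weak$^{*}$-closed, and intersecting over $x\in\mathcal{S}$ makes $\mathcal{S}^{\perp}$ weak$^{*}$-closed. I expect the main obstacle to be the inner-ideal claim in $(a)$: everything else is a direct consequence of the symmetry of orthogonality, of continuity, and of the norm axiom, whereas identifying $\{x\}^{\perp}$ with a Peirce-$0$ space genuinely uses the range tripotent in the bidual and the weak$^{*}$-continuity needed to pass from orthogonality on $E(x)$ to orthogonality with $r$.
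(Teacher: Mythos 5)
Your argument is correct: the reduction to singletons, the use of $\|\{y,y,y\}\|=\|y\|^{3}$ for $(b)$, the symmetry of $\perp$ for $(c)$ and $(d)$, the identification $\{x\}^{\perp}=E\cap E^{**}_{0}(r_{E^{**}}(x))$ via the odd roots and separate weak$^{*}$-continuity for the inner-ideal claim in $(a)$, and the continuity arguments for closedness all go through using only facts recorded in the preliminaries. The paper itself gives no proof but simply quotes \cite[Lemma 3.2]{EdRu96} and \cite[Lemma 3.1]{BurGarPe11StudiaTriples}, and your route through the range tripotent and the Peirce-$0$ space is essentially the standard argument found in those sources.
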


We should note that the orthogonal annihilator of a subset $\mathcal{S}$ in a JB$^*$-algebra $M$ need not coincide with the quadratic annihilators defined in \eqref{def quadratic annihilator} and \eqref{def second quadratic annihilator pre}. In general we have \begin{equation}\label{eq the orthogonal annihilator is smaller} (\mathcal{S}^{\perp})^* \subseteq \  ^{\perp_q}\mathcal{S}, \ \hbox{ and }  (\mathcal{S}^*)^{\perp} \subseteq \mathcal{S}^{\perp_q}, \hbox{ for all } \mathcal{S}\subset M,  
\end{equation} where $\mathcal{S}^* =\{ x^* : x\in \mathcal{S} \}$. The equalities do not necessarily hold. For example, let $e$ be a complete tripotent in $M= B(H)$ which is not unitary (for example a partial isometry satisfying $e e^* = \11$ and $p= e^* e \neq \11$). Clearly, $\{e\}^{\perp} = M_0(e) = \{0\}$ and $\{e^*\}^{\perp} = M_0(e^*) = \{0\}.$ It is easy to check that $^{\perp_q}\{e\} = M_1(e) = M (\11-p)$ and $(\11-p) M \subseteq  \{e\}^{\perp_q}$.

\begin{lemma}\label{l positive sets and annihilators} Let $\mathcal{S}$ be a set of positive elements in a JB$^*$-algebra $M$. Then $$S^{\perp_q}\cap M_{sa} = S^{\perp} \cap M_{sa}, \hbox{ and  } ^{\perp_q}S\cap M^+ = S^{\perp} \cap M^{+}.$$  
\end{lemma}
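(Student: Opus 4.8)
The plan is to split each of the two claimed identities into two inclusions, one of which is immediate. Since every member of $\mathcal{S}$ is positive, hence self-adjoint, we have $\mathcal{S}^*=\mathcal{S}$; feeding this into the general containment \eqref{eq the orthogonal annihilator is smaller} gives $\mathcal{S}^{\perp}=(\mathcal{S}^*)^{\perp}\subseteq \mathcal{S}^{\perp_q}$, and for any self-adjoint $a\in\mathcal{S}^{\perp}$ we get $a=a^*\in(\mathcal{S}^{\perp})^*\subseteq {}^{\perp_q}\mathcal{S}$. Intersecting with $M_{sa}$ and with $M^+$ respectively yields $\mathcal{S}^{\perp}\cap M_{sa}\subseteq \mathcal{S}^{\perp_q}\cap M_{sa}$ and $\mathcal{S}^{\perp}\cap M^{+}\subseteq {}^{\perp_q}\mathcal{S}\cap M^{+}$, so these two inclusions require no further work.

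For the reverse inclusions it suffices to establish, for a fixed positive $s\in M$, the two implications: if $a\in M_{sa}$ satisfies $U_a(s)=0$ then $a\perp s$; and if $a\in M^+$ satisfies $U_s(a)=0$ then $a\perp s$. Quantifying over $s\in\mathcal{S}$ then gives $\mathcal{S}^{\perp_q}\cap M_{sa}\subseteq \mathcal{S}^{\perp}$ and ${}^{\perp_q}\mathcal{S}\cap M^+\subseteq \mathcal{S}^{\perp}$, which closes both identities. In each case I would first rewrite the quadratic hypothesis as a triple-product condition by means of the identity $Q(a)(x)=U_a(x^*)$ together with the relevant self-adjointness: namely $U_a(s)=Q(a)(s)=\{a,s,a\}$ in the first case and $U_s(a)=Q(s)(a)=\{s,a,s\}$ in the second.

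The heart of the argument, and the step I expect to be the main obstacle, is that $\{a,s,a\}=0$ (i.e.\ the vanishing of $Q(a)$ on $s$) does \emph{not} force $a\perp s$ in general; orthogonality is strictly stronger, so the positivity of $s$ (and, in the second case, of $a$) must be exploited in an essential way. To do this I would invoke the Shirshov--Cohn theorem, exactly as in the proof of Corollary \ref{c unit for the Jordan product}, to realise the JB$^*$-subalgebra $B$ generated by $a$ and $s$ (which contains the square roots $s^{\frac12}$ and $a^{\frac12}$) as a JB$^*$-subalgebra of a C$^*$-algebra $A$ carrying the triple product \eqref{eq ternary product on C*-algebras}. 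In the first case, with $c=s^{\frac12}$, the hypothesis becomes $0=\{a,s,a\}=asa=ac^2a=(ca)^*(ca)$, whence $ca=0$, and taking adjoints $ac=0$, so that $as=sa=0$ and therefore $s\circ a=0$. In the second case, with $e=a^{\frac12}$, it becomes $0=\{s,a,s\}=sas=se^2s=(es)^*(es)$, giving $es=se=0$ and again $s\circ a=0$.

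Finally, since $s$ is positive, I would apply the characterization \eqref{eq equivalence of orthogonality and zero product with a positive factor} to pass from $s\circ a=0$ to $a\perp s$, which is precisely the membership $a\in\{s\}^{\perp}$ needed. One routine point to check along the way is that the Jordan product $s\circ a$ and the square roots are computed identically in $B\subseteq A$ and in $M$, so that the equality $s\circ a=0$ obtained inside $A$ genuinely holds in $M$; this is automatic because $B$ is a JB$^*$-subalgebra of both, and orthogonality is an intrinsic notion depending only on the subtriple generated by $a$ and $s$.
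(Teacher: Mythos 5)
Your proposal is correct and follows essentially the same route as the paper: the easy inclusions come from the general containment \eqref{eq the orthogonal annihilator is smaller} together with $\mathcal{S}^*=\mathcal{S}$, and the reverse inclusions are obtained by a Shirshov--Cohn embedding, factoring the quadratic expression as $(ca)^*(ca)$ (resp.\ $(es)^*(es)$) with a square root to get a zero associative product, and then invoking \eqref{eq equivalence of orthogonality and zero product with a positive factor}. The only difference is cosmetic: you write out the second identity explicitly, whereas the paper dispatches it with ``the other equality can be proved similarly.''
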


\begin{proof} The inclusion $\supseteq$ is clear from \eqref{eq the orthogonal annihilator is smaller}. Fix $s\in \mathcal{S}$ and $h\in S^{\perp_q}\cap M_{sa}$. We can find, via Macdonald's or Shirshov-Cohn theorem, a C$^*$-algebra $B$ containing $s$ and $h$ as positive and hermitian elements, respectively. Since $s = b^2$ for some $b\in M$ and also in $B$, and $0 = U_h (s) = U_{h} (b^2) = (h b) (b h)^*$, we deduce that $h b = b h =0,$ and hence $h s = h b^2 =0$ and $h\circ s =0$ in $B$ and in $M$. This is enough to guarantee that $h\perp s$ (cf. \cite[Lemma 4.1]{BurFerGarPe09}).  The other equality can be proved similarly. 
\end{proof}

The following lemma is probably known, but it is included here for the lacking of an explicit source. 

\begin{lemma}\label{l inner ideals as orthogonal annihilators of sets of positive} Let $\mathcal{S}$ be a subset of positive elements in a JB$^*$-algebra $M$. Then the orthogonal annihilator of $\mathcal{S}$,  $\mathcal{S}^{\perp},$ is a triple inner ideal and a hereditary JB$^*$-subalgebra of $M$. 
\end{lemma}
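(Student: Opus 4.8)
The plan is to deduce everything from the description of $\mathcal{S}^{\perp}$ in terms of the Jordan product of positive elements together with the Peirce theory of range projections. First note that, by Lemma~\ref{l basic prop annihilator}$(a)$, $\mathcal{S}^{\perp}$ is already a norm-closed (triple) inner ideal of $M$, so the only genuine content of the statement is to upgrade this to a \emph{hereditary JB$^*$-subalgebra}. Since every element of $\mathcal{S}$ is positive, \eqref{eq equivalence of orthogonality and zero product with a positive factor} lets me rewrite
\[
\mathcal{S}^{\perp}=\{\, y\in M : s\circ y =0 \ \text{ for all } s\in\mathcal{S}\,\}=\bigcap_{s\in\mathcal{S}}\{s\}^{\perp}.
\]
As an arbitrary intersection of hereditary JB$^*$-subalgebras is again a hereditary JB$^*$-subalgebra (the intersection of JB$^*$-subalgebras is a JB$^*$-subalgebra, and an intersection of faces of $M^{+}$ is a face of $M^{+}$), I may and shall assume that $\mathcal{S}=\{s\}$ consists of a single positive element.

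Next I would pass to the bidual. Let $p:=r_{M^{**}}(s)$ denote the range projection of $s$ in $M^{**}$; as $s\geq 0$ this range tripotent is indeed a projection, $s$ is positive in the JBW$^*$-algebra $M^{**}_{2}(p)$, and $p$ is the smallest tripotent with this property. Recalling that $y\perp s$ holds if and only if $y$ is orthogonal to $r_{M^{**}}(s)=p$ — one implication because the $(2n-1)$th-roots of $s$ lie in $M(s)$ and converge strong$^*$ to $p$ while the triple product is separately weak$^*$ continuous, the other from the Peirce rule $\{M^{**}_{0}(p),M^{**}_{2}(p),M^{**}\}=0$ together with $s\in M^{**}_{2}(p)$ — and that orthogonality to the projection $p$ is exactly membership in its Peirce-$0$ subspace, I obtain the key identity
\[
\{s\}^{\perp}=M\cap M^{**}_{0}(p).
\]

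It then remains to record that the Peirce-$0$ subspace of a projection is well behaved. The space $M^{**}_{0}(p)=U_{\mathbf{1}-p}(M^{**})$ is a weak$^*$-closed JBW$^*$-subalgebra of $M^{**}$ with unit $\mathbf{1}-p$: closure under the Jordan product and the involution is immediate from the Peirce multiplication rules $M^{**}_{0}(p)\circ M^{**}_{0}(p)\subseteq M^{**}_{0}(p)$ and $(M^{**}_{0}(p))^{*}=M^{**}_{0}(p)$. Moreover it is hereditary in $M^{**}$: if $0\le a\le b$ with $b\in M^{**}_{0}(p)$, then applying the positive operator $U_{p}$ gives $0\le U_{p}(a)\le U_{p}(b)=0$, so $U_{p}(a)=0$, and a Shirshov--Cohn reduction to a C$^*$-algebra containing $p$ and $a$ (where $U_{p}(a)=pap=0$ with $a\ge 0$ forces $pa=ap=0$) yields $p\circ a=0$, i.e.\ $a\in M^{**}_{0}(p)$. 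Intersecting with $M$ now finishes the proof: $\{s\}^{\perp}=M\cap M^{**}_{0}(p)$ is a JB$^*$-subalgebra of $M$, and if $0\le a\le b$ with $a\in M$ and $b\in\{s\}^{\perp}$, then $b\in M^{**}_{0}(p)$ forces $a\in M^{**}_{0}(p)$, whence $a\in M\cap M^{**}_{0}(p)=\{s\}^{\perp}$.

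The part demanding most care is the passage from the (essentially free) triple-inner-ideal property to the Jordan-algebraic assertions. The device that makes this work is the reduction of $\{s\}^{\perp}$ to the corner $M\cap M^{**}_{0}(r_{M^{**}}(s))$: once orthogonality to $s$ is re-encoded as living in the Peirce-$0$ space of its range projection, closure under $\circ$ and heredity become Peirce arithmetic plus the heredity of a corner, the latter being the only genuinely nontrivial point (handled above via positivity of $U_{p}$ and a Shirshov--Cohn embedding). An alternative, entirely inside $M$, would bypass the bidual by proving heredity directly: from $s\perp b$ one has $U_{s}(b)=0$, so $0\le a\le b$ gives $U_{s}(a)=0$, and Shirshov--Cohn applied to $s,a$ turns $sas=0$ into $s\circ a=0$; this is cleaner for heredity but less transparent for closure under the Jordan product, which is why I would route the subalgebra claim through the Peirce-$0$ corner.
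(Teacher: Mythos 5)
Your proof is correct, but it follows a genuinely different route from the one in the paper. After the common first step (quoting Lemma \ref{l basic prop annihilator} for the inner-ideal part), the paper stays entirely inside $M$: it uses \eqref{eq equivalence of orthogonality and zero product with a positive factor} to get $x^*\in\mathcal{S}^{\perp}$ and then, writing $x=h+ik$ with $h,k$ self-adjoint, deduces $h^2,k^2,(h+k)^2\in\mathcal{S}^{\perp}$ from $h^2\circ s=\{h,h,s\}=0$, so that polarization gives closure under the Jordan product; heredity is then immediate from positivity of $U_s$ together with Lemma \ref{l positive sets and annihilators}. You instead reduce to a singleton $\{s\}$ and establish the structural identity $\{s\}^{\perp}=M\cap M^{**}_{0}\bigl(r_{M^{**}}(s)\bigr)$, after which both the subalgebra property and heredity become Peirce arithmetic for a corner of the bidual. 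Your version is more conceptual and isolates a reusable fact (the Peirce-$0$ analogue of the identity \eqref{eq inner ideal intersection with A positive} that the paper records for Peirce-$2$ spaces of C$^*$-algebras), at the cost of invoking the bidual, weak$^*$ limits of odd roots, and separate weak$^*$ continuity of the triple product; the paper's argument is more elementary and self-contained, needing only the orthogonality-versus-Jordan-product equivalence and polarization. Two small remarks: your reduction to singletons tacitly uses that positivity (hence the order and the faces of the positive cone) in a JB$^*$-subalgebra agrees with that of $M$ -- true, but worth stating; and your heredity argument in $M^{**}$ essentially reproves the paper's Lemma \ref{l positive sets and annihilators}, which you could simply have cited at that point.
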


\begin{proof} Clearly $\mathcal{S}^{\perp}$ is a closed subspace and an inner ideal (see Lemma \ref{l basic prop annihilator}). Let us take $x\in \mathcal{S}^{\perp}$ and $s\in \mathcal{S}$. Since $s\perp x$ with $s\geq 0$, we deduce from \cite[Lemma 4.1]{BurFerGarPe09} that $x\circ s =0,$ and hence $x^* \circ s =0$, which is equivalent to $s\perp x^*,$ and consequently $x^*\in  \mathcal{S}^{\perp}$.\smallskip

The elements $h = \frac{x+x^*}{2}$ and $k = \frac{x-x^*}{2i}$ lie in $M_{sa}\cap \mathcal{S}^{\perp}.$  Since $h^2\circ s = \{h,h,s\} =0$, a new application of \cite[Lemma 4.1]{BurFerGarPe09} proves that $h^2\in \mathcal{S}^{\perp}.$ Similarly, $k^2\in \mathcal{S}^{\perp}.$ Actually, since $h+k\in \mathcal{S}^{\perp},$ we can similarly deduce that $(h+k)^2\in \mathcal{S}^{\perp}.$ It follows from this that $h\circ k \in \mathcal{S}^{\perp},$ and $x^2 = h^2 - k^2 + 2 i h\circ k \in\mathcal{S}^{\perp}$ too.\smallskip

Finally, let us take $0\leq a \leq b$ with $b\in S^{\perp}$ and any $s\in S^{\perp}$ then $0\leq U_s(a)\leq U_s(b)=0$. It follows from Lemma \ref{l positive sets and annihilators} that $a\in \mathcal{S}^{\perp}.$
\end{proof}

Our next definition is now fully justified by the previous results. 

\begin{definition}\label{def SAJBW-algebra}
Let $N$ be a JB-algebra. We shall say that $N$ is a SAJBW-algebra if for any $x,y\in N^+$ with $ x\circ y =0$ there exists $e\in N^+$ (not necessarily a projection) such that $e\circ x = x$ and $e\circ y =0$. A JB$^*$-algebra $M$ will be called a SAJBW$^*$-algebra if its self-adjoint part is a SAJBW-algebra.
\end{definition}
	
The next proposition is a generalization of Pedersen's result in Proposition \ref{Prop Pedersen} to the setting of JB$^*$-algebras. Our new notions of weakly Rickart and SAJBW$^*$-algebras are the missing ingredients to complete the whole picture. 

\begin{proposition}\label{p sufficient and necessary conditions in terms of inner ideals} Let $M$ be a JB$^*$-algebra. Consider the following property: Given two orthogonal, hereditary JB$^*$-subalgebras $B$ and $C$ of $M$, there is a positive $e$ in $M$ which is a unit for $B$ and annihilates $C$. \begin{enumerate}[$(a)$]\item The previous property holds for all pairs of hereditary JB$^*$-subalgebras $B$, $C$ if, and only if, $M$ is an AJBW$^*$-algebra; 
\item The property holds when $B$ is the inner ideal generated by a positive element and $C$ is arbitrary if, and only if, $M$ is a weakly Rickart JB$^*$-algebra;
		\item The property holds when $C$ is the inner ideal generated by a positive element and $B$ is arbitrary if, and only if, $M$ is a Rickart JB$^*$-algebra;  
		\item The property holds when both $B$ and $C$ are inner ideals generated, each one of them by a single positive element if, and only if, $M$ is a SAJBW$^*$-algebra.
	\end{enumerate} 
\end{proposition}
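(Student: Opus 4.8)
The plan is to prove each of the four equivalences by splitting it into its two implications and organising everything around one dictionary: a positive element $x$ is traded for the $\sigma$-unital hereditary JB$^*$-subalgebra $M(x)=\overline{U_x(M)}$ that it generates, and the orthogonality $B\perp C$ of hereditary JB$^*$-subalgebras is read as the inclusion $B\subseteq C^{\perp}$ into the orthogonal annihilator. Throughout I would lean on three facts already established: by Lemma \ref{l inner ideals as orthogonal annihilators of sets of positive}, $C^{\perp}$ is again a hereditary JB$^*$-subalgebra; by Lemma \ref{l positive sets and annihilators}, quadratic and orthogonal annihilators of sets of positive elements agree on $M_{sa}$; and by Corollary \ref{c unit for the Jordan product}, the relation $e\circ x=x$ (for $x\geq 0$) propagates to $e\circ z=z$ for every $z\in M(x)$ and to $e\circ r(x)=r(x)$. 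I would also use repeatedly that for self-adjoint $z$ the condition $U_z(a)=0$ with $a\geq 0$ forces $a\circ z=0$, hence $z\perp a$ by \eqref{eq equivalence of orthogonality and zero product with a positive factor}, via the standard Shirshov--Cohn reduction to a special subalgebra.

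For the ``only if'' directions (the structural hypothesis must deliver the positive unit $e$) the arguments imitate Remark \ref{r Pedersen is a characterization}. In $(b)$ I would take $e=\mathrm{RP}(x)$, the range projection provided by weak Rickartness: Corollary \ref{c unit for the Jordan product} makes it a unit for $B=M(x)$, while for $c\in C^{+}$ the orthogonality $c\perp x$ gives $U_c(x)=0$, whence $\mathrm{RP}(x)\circ c=0$ by Definition \ref{def weak Rickart}. In $(a)$ the Baer hypothesis yields a projection $q$ with $(C^{+})^{\perp_q}=U_q(M)$, so that $C^{\perp}\cap M_{sa}=M_2(q)\cap M_{sa}$ by Lemma \ref{l positive sets and annihilators}; then $B\subseteq C^{\perp}$ forces $q\circ b=b$ on $B$, and $q\in M_2(q)\perp C$ forces $q\circ c=0$, so $e=q$ works. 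Part $(c)$ is identical after replacing $q$ by $\mathbf{1}-\mathrm{RP}(y)$ in the unital algebra, and in $(d)$ the SAJBW$^*$-axiom (Definition \ref{def SAJBW-algebra}) returns the positive $e$ directly.

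The real content is in the ``if'' directions. Part $(d)$ is immediate: the property for $B=M(x)$, $C=M(y)$ with $x\perp y$ returns $e\in M^{+}$ with $e\circ x=x$ and $e\circ y=0$, which is exactly Definition \ref{def SAJBW-algebra}. For $(b)$ with $M$ \emph{unital} I would fix $a\in M^{+}$ and apply the property to $B=M(a)$ and $C=\{a\}^{\perp}$; the resulting positive unit $e$ satisfies $e\circ a=a$, hence $(\mathbf{1}-e)\circ a=0$ and $\mathbf{1}-e\in\{a\}^{\perp}=C$, so $e\circ(\mathbf{1}-e)=0$ forces $e^{2}=e$. Thus $e$ is a \emph{projection}, and since every self-adjoint $z$ with $U_z(a)=0$ already lies in $C$, the element $e$ is precisely the projection required by Definition \ref{def weak Rickart} (unique by Remark \ref{r existence and uniqueness of the range projection in wRickart JBalgebras}). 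Part $(c)$ runs similarly but forces a unit first (apply the property to $C=M(0)$, $B=M$) and then, for $a\in M^{+}$, uses $C=M(a)$, $B=\{a\}^{\perp}$: here $e\circ a=0$ places $e$ inside $\{a\}^{\perp}$, of which it is a unit, so $e$ is the unit projection of $\{a\}^{\perp}$ and $p=\mathbf{1}-e$ is the sought range projection; with Lemma \ref{l weak Rickart and weak Baer} this gives Rickartness. For $(a)$ the all-pairs property makes $M$ unital and Rickart as above, and mimicking the proof of Proposition \ref{p characterization of finite Rickart and Baer Cstar algebras by inner ideals}$(b)$ (take $C=\{p_i\}^{\perp}$, $B=C^{\perp}$) produces $\bigvee_i p_i$ for any orthogonal family of projections; Rickartness together with the resulting complete lattice identifies $M$ as an AJBW$^*$-algebra by the Ayupov--Arzikulov characterization \cite{AyuArzi2016Rickart} (see also Theorem \ref{t AJBW-}).

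The hard part will be the \emph{non-unital} case of $(b)$: the device ``$\mathbf{1}-e\in C$'' is no longer available, so the positive local unit $e$ need not be idempotent and the range projection $r(a)$ a priori lives only in $M^{**}$. The plan is to pass to the unitization $M_{\mathbf 1}=M\oplus\mathbb{C}\mathbf{1}$ and to prove a Jordan counterpart of Berberian's unitization theorem (the analogue of \cite[Theorem 5.1]{BerBaerStarRings} and \cite[Lemma 3.6]{SaitoWright2015}), namely that the property ascends from $M$ to $M_{\mathbf 1}$; the unital argument above then makes $M_{\mathbf 1}$ Rickart, and a computation with the splitting $e=e_1+\lambda\mathbf{1}$ (exactly as in the proof of Lemma \ref{l Ara's theorem for weakly Rickark}) should show that the projection obtained in $M_{\mathbf 1}$ in fact lies in $M$, where Proposition \ref{p weak Rickart condition is inherited by Peirce2 projections} can be used to locate the relevant corner. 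Securing this ascent, and verifying that the projection it produces annihilates every self-adjoint $z$ with $U_z(a)=0$, is the delicate point on which the whole non-unital ``if'' direction rests.
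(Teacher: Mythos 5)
Most of your outline matches the paper's proof: the implications from the structural hypotheses to the Pedersen-type property in all four parts, the whole of $(d)$, part $(c)$, and your treatment of $(a)$ (the paper verifies the Baer condition directly for an arbitrary $\mathcal{S}\subseteq M^{+}$ using $C=\mathcal{S}^{\perp}$ and $B=(C\cap M^{+})^{\perp}$, whereas you build suprema of orthogonal families of projections and quote the Ayupov--Arzikulov characterization; both routes work) are sound and essentially the arguments in the text.

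The genuine gap is the one you flag yourself: the implication ``property $\Rightarrow$ weakly Rickart'' in $(b)$ when $M$ has no unit. This is the central case, since a unital weakly Rickart JB$^*$-algebra is already Rickart by Lemma \ref{l weak Rickart and weak Baer}. Your plan of ascending to $M_{\mathbf 1}$ rests on an unproved Jordan analogue of Berberian's unitization theorem, and that analogue is in danger of being circular: the C$^*$-proof of \cite[Theorem 5.1]{BerBaerStarRings} already presupposes that the non-unital algebra has well-behaved annihilating projections, which is exactly what you are trying to produce. The paper needs no unitization. From $B=M(a)$ and $C=\{a\}^{\perp}$ it obtains a positive $e$ with $e\circ a=a$ which annihilates $\{a\}^{\perp}$; by Corollary \ref{c unit for the Jordan product} $e$ and $a$ operator commute, so $e^{n}\circ a=a$ and hence $(e^{n}-e)\circ a=0$ for every $n$, which by \eqref{eq equivalence of orthogonality and zero product with a positive factor} places $e^{n}-e$ in $\{a\}^{\perp}$; since $e$ annihilates $\{a\}^{\perp}$ this yields $e^{n+1}=e^{2}$ for all $n$, and the Gelfand theory of the commutative C$^*$-algebra generated by $e$ forces $e$ to be a projection. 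In other words, the element $e^{n}-e$ plays in general the role that $\mathbf{1}-e$ plays in your unital argument, and the non-unital difficulty evaporates. Without this observation (or a worked-out substitute for the unitization step), your proof of $(b)$ is incomplete.
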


\begin{proof}	$(d)$ $(\Rightarrow)$ By considering two positive elements $x,y$ in $M$ with $x\circ y=0$, the inner ideals $M(x)$ and $M(y)$ are orthogonal, and hence by hypothesis, there exists a positive $e\in M$ which is a unit for $M(x)$ and annihilates $M(y)$. Clearly, $ e\circ x = x$ and $e\circ y =0$.\smallskip
	
$(\Leftarrow)$ If $M$ is a SAJBW$^*$-algebra, given positive elements $x,y$ in $M$ with $x\circ y = 0$, there exists a positive $e$ in $M$ such that $e\circ x = x$ and $e\circ y =0$ --the latter being equivalent to $y\perp e$ by \eqref{eq equivalence of orthogonality and zero product with a positive factor}. Corollary \ref{c unit for the Jordan product} implies that $e$ is a unit for $B=\overline{ U_x(M)}$. Furthermore, for each $a$ in $M$ the elements $e$ and $U_y (a)$ are orthogonal since $y\perp e$ and $\{e\}^{\perp}$ is an inner ideal of $M$ and hence contains all elements in $U_y(M) = Q(y)(M)$. It follows that $e$ annihilates $C=\overline{ U_y(M)}$. \smallskip
	
$(b)$  $(\Rightarrow)$ Fix a positive $a\in M$, by applying the hypothesis to $B= M(a)$ and $C= \{a\}^{\perp}$ we find a positive $e\in M$ which is a unit for $M(a)$ and annihilates $\{a\}^{\perp}$. We know from Corollary \ref{c unit for the Jordan product} that $e$ and $a$ operator commute, and thus $(e^n -e) \circ a = 0$ for all natural $n$. Having in mind \eqref{eq equivalence of orthogonality and zero product with a positive factor}, the properties of $e$ assure that $0 = e\circ (e^n -e) = e^{n+1} -e^2$ for all natural $n$. A simple application of the local Gelfand theory on the commutative and associative JB$^*$-algebra generated by $e$ proves that $e$ is a projection. 
\smallskip

Now we take any $z\in \{a\}^{\perp_q}\cap M_{sa}$. By Lemma \ref{l positive sets and annihilators}, $\{a\}^{\perp_q}\cap M_{sa} = \{a\}^{\perp}\cap M_{sa},$ and thus the properties of $e$ imply that  $e\circ z =0.$ Therefore, $M$ is a weak Rickart JB$^*$-algebra.\smallskip 
	
$(\Leftarrow)$ Suppose now that $M$ is a weak Rickart JB$^*$-algebra. Take $B= \overline{ U_x(M)}$ and $C$ as in the statement, with $x$ positive in $M$. It follows from the hypothesis that there exists a projection $p\in M$ satisfying $p \circ x = x$ and $p\circ z = 0$ for all $z\in M_{sa}$ with $U_z (x)=0.$ Clearly, each $c\in C_{sa}$ satisfies $U_c (x) =0$, and thus $p\circ c =0$ for all $c\in C$.\smallskip

$(c)$  $(\Rightarrow)$ For $C = M(0)=\{0\}$ and $B= M$, the hypothesis implies the existence of a unit element $\11\in M.$ Pick $a\in M^+$. Since $B=\{a\}^{\perp}$ and $C= M(a) $ are two orthogonal hereditary JB$^*$-subalgebras, by hypothesis, there exists a positive $e\in M$ which is a unit for $B$ and annihilates $C$. That is $e\in \{a\}^{\perp},$ and hence $e \circ e =e$, witnessing that $e$ is a projection in $M$.\smallskip 
	
As before, Lemma \ref{l positive sets and annihilators} proves that $\{a\}^{\perp_q}\cap M_{sa} = \{a\}^{\perp}\cap M_{sa}.$ It follows from the properties of $e$ that $\{a\}^{\perp_q}\cap M_{sa}\subseteq \{\11-e\}^{\perp} = U_{e} (M)$. Reciprocally, if $z\in  U_{e} (M)\cap M_{sa},$ since $e\circ a =0,$ and hence $a\in U_{\11-e} (M)$, it follows that $z\in  \{a\}^{\perp}\cap M_{sa} = \{a\}^{\perp_q}\cap M_{sa}.$\smallskip 
	
$(\Leftarrow)$ We assume now that $M$ is a Rickart JB$^*$-algebra. Take $C= \overline{ U_x(M)}$ and $B$ as in the statement, with $x$ positive in $M$. Under these circumstances there exists a projection $p\in M$ satisfying $\{x\}^{\perp_q} \cap M_{sa} = U_p (M)\cap M_{sa}$. Clearly, each $b\in B$ lies in $\{x\}^{\perp_q}$, and thus $p\circ b = b$ for all $b\in B$. Since $p\in U_p(M)$, we have $U_p (x)  =0$. Having in mind that $x$ is positive, we deduce, via Shirshov-Cohn theorem, that $p$ and $x$ are orthogonal. Consequently, by Peirce arithmetic, $p$ annihilates $C= \overline{ U_x(M)}$.\smallskip

	$(a)$  $(\Rightarrow)$ Taking $B=\{0\}$ and $C= A$, we find a unit $\11\in M$.  Fix a subset $\mathcal{S}\subseteq M^+$. The inner ideal $C=\mathcal{S}^{\perp}$ is a hereditary JB$^*$-subalgebra of $M$, and the same happens to $B= \left(C\cap M^+\right)^{\perp}$. Clearly, $B\perp C$. By assumptions, there exists a positive $e$ in $M$ which is a unit for $B$ and annihilates $C$. In particular $\11-e$ lies in $C$ and hence $e\circ (\11-e) = 0$. Thus, $e$ is a projection.\smallskip
	
	Lemma \ref{l positive sets and annihilators} implies that $S^{\perp_q}\cap M_{sa} = S^{\perp} \cap M_{sa}  = U_{\11-e} (M) \cap M_{sa},$ where the last equality follows from the same arguments given in the proof of $(c)$.\smallskip
	
	$(\Leftarrow)$ We assume finally that  $M$ is an AJBW$^*$-algebra (it is, in particular, unital). Taking $B$ and $C$ as in the statement, for $C^+$, there exists a projection $p$ in $M$ such that $(C^+)^{\perp_q} = U_p (M_{sa})$. Since $B^+\subseteq (C^+)^{\perp_q},$ $p$ is the unit element in $ U_p (M_{sa})$, and every element in $B$ is a linear combination of four positive elements in $B$, $p$ must be a unit for $B$. On the other hand, each positive $c\in C$ satisfies that $U_p (c)=0$, and thus $p$ is orthogonal to each positive element in $C$. Therefore, $p$ is orthogonal to $C$, as desired. 
\end{proof}	

Let $A$ be a C$^*$-algebra. It follows from the previous proposition and from Proposition \ref{Prop Pedersen} that $A$ is an AJBW$^*$-algebra (respectively, a Rickart, a weakly Rickart or a SAJBW$^*$-algebra) if and only if it is an AW$^*$-algebra (respectively, a Rickart, a weakly Rickart or a SAW$^*$-algebra). The statement concerning AJBW$^*$-algebras and AW$^*$-algebra (respectively, Rickart JB$^*$-algebras and Rickart C$^*$-algebras) can be derived from the results by Ayupov and Arzikulov in \cite[Propositions 1.1, 1.3, 2.1 and 2.3] {AyuArzi2016Rickart}.\smallskip

The next technical lemma will be required in the main result of this section. Before presenting the result, we recall some facts on operator commutativity. By the Shirshov--Cohn theorem \cite[Theorem 2.4.14]{HOS} any two self-adjoint elements $a$ and $b$ in a JB$^*$-algebra $M$ generate a JB$^*$-subalgebra that can be realised as a JC$^*$-subalgebra of some $B(H)$ (see also \cite[Corollary 2.2]{Wright77}). Furthermore, under this identification, $a$ and $b$ commute in the usual sense whenever they operator commute in $M$ (compare Proposition 1 in \cite{Topping}). By the same arguments, for any pair of self-adjoint elements $a$ and $b$ in $M$ we have
\begin{equation}\label{eq operator comm of self-adjoint} a \hbox{ and } b \hbox{ operator commute if and only if $a^2 \circ b  = 2 (a\circ b)\circ a - a^2 \circ b$} 
\end{equation}

\begin{lemma}\label{l strongly associativity is inherited by the range projection} Let $M$ be a weakly Rickart JB$^*$-algebra. Let $a,b$ be two elements in $M$ with $a$ positive. Suppose that $a$ and $b$ operator commute. Then $RP(a)$ and $b$ operator commute. 
\end{lemma}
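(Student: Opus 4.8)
The plan is to prove the statement by showing that $b$ has no Peirce-$1$ component relative to the projection $p:=RP(a)$, because for a projection this is exactly equivalent to operator commutativity. First I would reduce to the case where $b$ is self-adjoint: applying the involution to the operator-commutativity relation $T_aT_b=T_bT_a$ and using that $a=a^*$ shows that $a$ also operator commutes with $b^*$, hence with $\operatorname{Re}b$ and $\operatorname{Im}b$; since $P_1(p)$ is complex linear and $p$ operator commutes with $b$ as soon as it operator commutes with both self-adjoint parts, it suffices to treat $b\in M_{sa}$. Next I record the basic structural fact that $p\circ a=a$ is the same as $a\in M_2(p)$, so that $a\circ p=a$, and I write the Peirce decomposition $b=b_2+b_1+b_0$ with $b_i\in M_i(p)$; the goal becomes $b_1=0$.

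The key step, and the one I expect to be the crux, is to extract a usable identity by testing the operator-commutativity relation against the element $p$ itself. Using the Peirce eigenvalues of $T_p$ one has $b\circ p=b_2+\frac12 b_1$, so on one side $T_aT_b(p)=a\circ\left(b_2+\frac12 b_1\right)=a\circ b_2+\frac12\,a\circ b_1$, where $a\circ b_0=0$ by the Peirce rule $M_2(p)\circ M_0(p)=0$. On the other side $T_bT_a(p)=b\circ(a\circ p)=b\circ a=a\circ b_2+a\circ b_1$. Comparing the two expressions forces $a\circ b_1=0$; the whole argument hinges on this cancellation, which is produced precisely by the Peirce-$1$ factor $\frac12$ weighed against $1$.

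Finally I would feed $a\circ b_1=0$ into the weakly Rickart structure. Since $a$ is positive, \eqref{eq equivalence of orthogonality and zero product with a positive factor} converts $a\circ b_1=0$ into $b_1\perp a$, whence $U_{b_1}(a)=\{b_1,a,b_1\}=0$. The genuine obstacle is that orthogonality to $a$ alone does not, in an arbitrary JB$^*$-algebra, prevent a nonzero Peirce-$1$ component; this is exactly where the weakly Rickart hypothesis must be used, through the minimality/annihilator property of $RP(a)$ in Definition \ref{def weak Rickart}. Applying that property to the positive element $a$ and $z=b_1$ (self-adjoint after the reduction) gives $p\circ b_1=0$, while $b_1\in M_1(p)$ forces $p\circ b_1=\frac12 b_1$; hence $b_1=0$. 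Therefore $b=b_2+b_0\in M_2(p)\oplus M_0(p)$, which by Peirce arithmetic, equivalently by \eqref{eq operator comm of self-adjoint}, is precisely the assertion that $p=RP(a)$ and $b$ operator commute.
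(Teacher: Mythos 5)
Your proof is correct and follows essentially the same route as the paper's: both test the relation $T_aT_b=T_bT_a$ at the element $p=RP(a)$, deduce that a self-adjoint element Jordan-annihilating $a$ (your $P_1(p)(b)$, the paper's $p\circ b-b$, which differ only by the harmless Peirce-$0$ part and a scalar factor) is orthogonal to $a$ via \eqref{eq equivalence of orthogonality and zero product with a positive factor}, and then invoke the annihilation property of $RP(a)$ from Definition \ref{def weak Rickart} to kill the Peirce-$1$ component. The only difference is cosmetic: you make the Peirce decomposition of $b$ explicit, whereas the paper works directly with the auxiliary element $c_j=p\circ b_j-b_j$.
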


\begin{proof} Let $p = RP(a)\in M.$ Let us write, $b = b_1 + i b_2 $, where each $b_j$ is self-adjoint for every $j=1,2$ and $a$ operator commutes with $b_1$ and $b_2$. Let us consider the element $c_j = p\circ b_j - b_j$. Having in mind that $a$ and $b_j$ operator commute and $p = RP(a)$ we obtain $$ c_j \circ a = (p\circ b_j - b_j) \circ a = (p\circ a)\circ b_j - b_j \circ a = a\circ b_j - b_j \circ a =0.$$ Since $a$ is positive, the above identity proves that $a\perp c_j$ (cf. \eqref{eq equivalence of orthogonality and zero product with a positive factor}). It follows from the properties of the range projection that $p\circ c_j =0,$ that is, $p\circ (p\circ b_j - b_j)=0$, or equivalently, $p\circ (p\circ b_j) =  p \circ b_j =  p^2 \circ b_j,$ which is equivalent to say that $p$ and $b_j$ operator commute (cf. \eqref{eq operator comm of self-adjoint}). It follows that $p$ and $b = b_1 + i b_2$ operator commute too.
\end{proof}

We can now establish a generalization of the result proved by Arzikulov in Theorem \ref{t AJBW-} in the line of Rickart's original result. 

\begin{theorem}\label{t weakly Rickart JBstar algebras contain an abundant set of projections} Every weakly Rickart JB$^*$-algebra is generated by its projections.
\end{theorem}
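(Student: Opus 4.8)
The plan is to reduce the statement to the commutative C$^*$-algebra case via maximal strongly associative subalgebras, using Lemma \ref{l strongly associativity is inherited by the range projection} as the crucial bridge. Since every element of $M$ decomposes as $h+ik$ with $h,k$ self-adjoint, it suffices to show that each self-adjoint element is a norm limit of finite linear combinations of projections. So I would fix $h\in M_{sa}$.

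First I would pass to a maximal strongly associative subalgebra. The JB$^*$-subalgebra generated by the single self-adjoint element $h$ is strongly associative, hence contained in some maximal strongly associative JB$^*$-subalgebra $B$ of $M$ (as recalled after Theorem \ref{t AJBW-}). Being commutative and associative, $B$ is a commutative C$^*$-algebra, say $B\cong C_0(\Omega)$, and its C$^*$-product agrees with the Jordan product inherited from $M$; in particular the projections of $B$ are exactly projections of $M$.

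The key step is to prove that $B$ is itself a weakly Rickart C$^*$-algebra. By the equivalence recorded after Proposition \ref{p sufficient and necessary conditions in terms of inner ideals} (a commutative C$^*$-algebra is weakly Rickart as a JB$^*$-algebra precisely when it is so as a C$^*$-algebra), it is enough to check that $B$ is a weakly Rickart JB$^*$-algebra. Given a positive $b\in B$, let $p=RP_M(b)$ be its range projection in $M$. For every $c\in B$ the elements $b$ and $c$ operator commute (strong associativity), so Lemma \ref{l strongly associativity is inherited by the range projection} yields that $p$ operator commutes with $c$. Hence $B$ together with $p$ generates a strongly associative subalgebra, and maximality forces $p\in B$. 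Since $p\circ b=b$, and since any $z\in B$ with $U_z(b)=0$ satisfies $p\circ z=0$ by the weakly Rickart property of $M$, the projection $p$ is a range projection of $b$ relative to $B$. This is exactly the weakly Rickart condition for $B$, and I expect the membership assertion $RP_M(b)\in B$ to be the main obstacle, resolved precisely by Lemma \ref{l strongly associativity is inherited by the range projection}.

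Finally, once $B$ is known to be a weakly Rickart C$^*$-algebra, Remark \ref{r wR Cstar algebras are generated by their projections} shows that $B$ is generated by its projections; in particular $h\in B$ is a norm limit of finite linear combinations of projections of $B\subseteq M$. As these are projections of $M$, the self-adjoint element $h$, and therefore every element of $M$, lies in the closed linear span of the projections of $M$, which proves that $M$ is generated by its projections.
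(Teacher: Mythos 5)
Your proposal is correct and follows essentially the same route as the paper's proof: reduce to a maximal strongly associative JB$^*$-subalgebra $B$ containing the given element, use Lemma \ref{l strongly associativity is inherited by the range projection} together with maximality to show $B$ contains the range projections of its positive elements (hence is a commutative weakly Rickart C$^*$-algebra), and conclude via Remark \ref{r wR Cstar algebras are generated by their projections}. The only cosmetic difference is that you reduce to self-adjoint elements where the paper reduces to positive ones.
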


\begin{proof} We can clearly reduce our argument to positive elements. Let $a$ be a positive element in a weakly Rickart JB$^*$-algebra $M$. Let $p = RP(a)$ denote the range projection of $a$ in $M$ (cf. Remark \ref{r existence and uniqueness of the range projection in wRickart JBalgebras}). It follows from Proposition \ref{p weak Rickart condition is inherited by Peirce2 projections} that $M_2(p)$ is a Rickart JB$^*$-algebra with unambiguous range projections of positive elements.\smallskip

Let $B$ be a maximal strongly associative JB$^*$-subalgebra of $M$ containing the element $a$. It follows from Lemma \ref{l strongly associativity is inherited by the range projection} that $B$ contains the range projection of every positive element $c\in B$. Therefore $B$ is a weakly Rickart associative JB$^*$-algebra, or equivalently, a commutative weakly Rickart C$^*$-algebra (cf. Propositions \ref{p sufficient and necessary conditions in terms of inner ideals} and \ref{Prop Pedersen}). Finally, it follows from Remark \ref{r wR Cstar algebras are generated by their projections} that $B$ (and hence $M$) is generated by its projections. We can also consider a maximal strongly associative JB$^*$-subalgebra $C$ of $M_2(p)$ containing $a$ and $p$. In this case $C$ is a Rickart associative JB$^*$-algebra, or equivalently, a commutative Rickart C$^*$-algebra (cf. Lemma \ref{l strongly associativity is inherited by the range projection}, Propositions \ref{p sufficient and necessary conditions in terms of inner ideals} and \ref{Prop Pedersen})   
\end{proof}

\section{Rickart JB*-triples}\label{sec: JB*-triples}

The definitions of Baer and Rickart JB$^*$-algebras introduced by Ayupov and Arzikulov and the notions of weakly Rickart and SAJBW$^*$-algebras we have developed in the previous section depend extremely on the existence of a cone of positive elements. This is a handicap if we want to work on the wider setting of JB$^*$-triples, where the existence of a cone of positive elements is, in general, impossible. \smallskip 

Furthermore, projections make no sense in the wider setting of JB$^*$-triples; and the role of projections is in general, played by tripotents. As in the original study by Rickart, our aim is to find an appropriate notion, in terms of ortogonal annihilators, local order and range tripotents, to assure that a JB$^*$-triple satisfying this property contains sufficiently many tripotents.\smallskip

The characterizations of (weakly) Rickart C$^*$-algebras established in section \ref{sec: Rickart Cstar-algebras} (see Propositions \ref{p sufficient and necessary conditions in terms of inner ideals without order} and \ref{p sufficient and necessary conditions in terms of inner ideals without order but local order}) offer a perspective which allows us to consider these notions in the wider setting of JB$^*$-triples.\smallskip

\begin{definition}\label{def Rickart and weakly Rickart JBstar triples} Let $E$ be a JB$^*$-triple. \begin{enumerate}[$\checkmark$]\item $E$ is called a SAJBW$^*$-triple if for any $x,y\in E$ with $x\perp y$, there exists a tripotent $e\in E$ satisfying $x\in E_2(e)$ and $y \in E_0(e)$.

\item  $E$ is a weakly Rickart (wR) JB$^*$-triple if given $x\in E$ and an inner ideal $J\subseteq E$ with $I = E(x)\perp J $, there exists a tripotent $e$ in $E$ such that $I\subseteq E_2(e)$ and $J\subseteq E_0(e)$.

\item $E$ is a weakly order-Rickart (woR) JB$^*$-triple if given $x\in E$ and an inner ideal $J\subseteq E$ with $I = E(x)\perp J $, there exists a tripotent $e$ in $E$ such that $x$ is positive in $E_2(e)$, and $J\subseteq E_0(e)$.

\item $E$ is called a Rickart JB$^*$-triple if it is weakly Rickart and admits a unitary element. 
\end{enumerate}
\end{definition}

For a JB$^*$-triple $E$, the following implications hold: $E$ is a Rickart JB$^*$-triple $\Rightarrow$ $E$ is a wR JB$^*$-triple, and $E$ is a woR JB$^*$-triples $\Rightarrow$ $E$ is a wR JB$^*$-triple.   \smallskip

Let $A$ be a C$^*$-algebra. It follows from Proposition \ref{p sufficient and necessary conditions in terms of inner ideals without order} that $A$ is a Rickart or a weakly Rickart C$^*$-algebra if and only if it is a Rickart or a weakly Rickart JB$^*$-triple, respectively. Furthermore, Propositions \ref{p sufficient and necessary conditions in terms of inner ideals without order but local order} and \ref{p sufficient and necessary conditions in terms of inner ideals without order} prove that a C$^*$-algebra is a wR JB$^*$-triple if and only it is a woR JB$^*$-triple.  So, our definition is consistent with the previous notions. We do not know if $A$ being a SAW$^*$-algebra implies that $A$ is a SAJBW$^*$-triple. For the reciprocal, suppose that $A$ is a SAJBW$^*$-triple. Fix two positive elements $x,y\in A$ with $x y =0,$ by hypothesis there exists a partial isometry $e$ with $x\in A_2(e)$ and $y\in A_0(e)$. Since $x = e e^* x e^* e$ and $x\geq 0$, it can be shown that $x = ee^* x = x ee^* = e^*e x = x e^*e$. Similarly, $ee^* y = y ee^* = y e^*e = ee^* y =0$. Therefore $A$ is a SAW$^*$-algebra.\smallskip

The examples provided in \cite{Rick46,BerBaerStarRings,PedSAW} show that, even in the category of abelian C$^*$-algebras, the classes of SAJBW$^*$-triples, weakly Rickart JB$^*$-triples and  weakly Rickart JB$^*$-triples are mutually different. \smallskip

In the setting of JB$^*$-algebras we do not know if there is a relation between being a Rickart or a weakly Rickart JB$^*$-algebra seen in section \ref{subsec: algebraic Jordan Rickart and Baer} and the corresponding notion as JB$^*$-triple. The lacking of polar decompositions, makes invalid the natural arguments. What we can prove is the following connection between JB$^*$-algebras which are woR JB$^*$-triples and weakly Rickart JB$^*$-algebras. 

\begin{proposition}\label{p woR JBstar triple implies wR JBstar algebra for Peirce-2} Let $M$ be a JB$^*$-algebra which is a woR JB$^*$-triple, then $M$ is a weakly Rickart JB$^*$-algebra. Actually, it suffices to assume that every positive element $a$ in $M$ admits a range tripotent $R(a)$ in $M$, and in such a case the range tripotent of $a$ in $M$ is precisely the range projection of $a$ in $M$ as weakly Rickart JB$^*$-algebra. 
\end{proposition}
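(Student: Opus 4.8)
The plan is to establish the stronger ``it suffices'' statement, from which the first assertion follows once we check that the woR hypothesis supplies a range tripotent inside $M$ for every positive element. Throughout I fix a positive $a\in M$ and write $p:=r(a)$ for its range tripotent; since $a\geq 0$ in the JB$^*$-algebra $M$, the element $p$ is the range \emph{projection} of $a$ in $M^{**}$, and the working hypothesis is that $p\in M$. The goal is to verify the two requirements of Definition \ref{def weak Rickart} for $M_{sa}$ with this $p$, and then to identify $p$ with $RP(a)$.

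For the first requirement, $p\circ a=a$ holds because $a$ lies in $M^{**}_2(p)$ with $p$ acting as the unit there, so that $\{a,p,p\}=\{p,p,a\}=a$; expanding $\{a,p,p\}=p\circ a$ in the JB$^*$-algebra triple product gives the identity, which transfers from $M^{**}$ to $M$ since $a,p\in M$. For the second requirement, take $z\in M_{sa}$ with $U_z(a)=0$, i.e. $z\in\{a\}^{\perp_q}\cap M_{sa}$. By Lemma \ref{l positive sets and annihilators} this set equals $\{a\}^{\perp}\cap M_{sa}$, so $z\perp a$. Since the odd roots $a^{[1/(2n-1)]}$ lie in the inner ideal generated by $a$ and converge weak$^*$ to $p=r(a)$, the separate weak$^*$ continuity of the triple product yields $L(z,p)=\lim_n L(z,a^{[1/(2n-1)]})=0$, i.e. $z\perp p$; hence $\{z,p,p\}=p\circ z=0$. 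This shows $M$ is a weakly Rickart JB$^*$-algebra, and the minimality characterisation in Remark \ref{r existence and uniqueness of the range projection in wRickart JBalgebras} identifies $p=r(a)$ with $RP(a)$, because any projection $q\in M$ with $q\circ a=a$ satisfies $a\in M^{**}_2(q)$ and therefore dominates $r(a)$ already in $M^{**}$. (Alternatively one may feed $p$ into Proposition \ref{p sufficient and necessary conditions in terms of inner ideals}$(b)$: $p$ is a unit for $B=M(a)$ by Corollary \ref{c unit for the Jordan product} and annihilates any hereditary $C\perp M(a)$ by the orthogonality just established.)

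It remains to see that a woR structure forces $r(a)\in M$. I would apply the defining property (Definition \ref{def Rickart and weakly Rickart JBstar triples}) to $x=a$ and to the inner ideal $J=\{a\}^{\perp}$, which is orthogonal to $I=M(a)$ by construction (cf. Lemma \ref{l basic prop annihilator}). This produces a tripotent $e\in M$ with $a$ positive in $M_2(e)$ and $\{a\}^{\perp}\subseteq M_0(e)$; since $a\in M_2(e)$ is orthogonal to $M_0(e)$, the reverse inclusion is automatic and $\{a\}^{\perp}=M_0(e)$. As $a$ is positive in $M^{**}_2(e)$, minimality of the range tripotent gives $r(a)\leq e$, so $u:=e-r(a)$ is a tripotent of $M^{**}$ lying in $M^{**}_2(e)\cap M^{**}_0(r(a))$. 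If I can show $u\in M$, then $u\in M_2(e)$ together with $u\perp a$ forces $u\in\{a\}^{\perp}\cap M_2(e)=M_0(e)\cap M_2(e)=\{0\}$, whence $e=r(a)\in M$, which is exactly the hypothesis needed above.

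The main obstacle is precisely this last point: one must upgrade the equality $\{a\}^{\perp}=M_0(e)$ of subspaces of $M$ to the equality $M^{**}_0(r(a))=M^{**}_0(e)$ of Peirce-$0$ spaces in $M^{**}$ (equivalently, $\{a\}^{\perp}$ must be weak$^*$-dense in $M^{**}_0(r(a))$). Passing to weak$^*$-closures only yields $M^{**}_0(e)=\overline{\{a\}^{\perp}}^{w^*}\subseteq M^{**}_0(r(a))$, which is the inclusion already granted by $r(a)\leq e$; and the example $C([0,1])$ (which is \emph{not} woR) shows that a single application of the defining property cannot give the reverse inclusion, since there $e$ can be the unit while $r(a)$ escapes to $M^{**}$. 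Thus the genuine difficulty is to exploit the \emph{full} woR hypothesis---its validity for every element and every orthogonal inner ideal---so as to realise the range projection of $a$ inside the Peirce-$2$ algebra $M_2(e)$, equivalently to show that $M_2(e)$ is itself weakly Rickart. This is where the real work lies, and it is the step I would develop most carefully.
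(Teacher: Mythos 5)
There is a genuine gap, and it starts with a misreading of the hypothesis. The ``range tripotent $R(a)$ in $M$'' of the statement is the one introduced after Lemma \ref{l existence of range tripotents in woRtriples}: the (unique) tripotent $e\in M$ such that $a$ is positive in $M_2(e)$ \emph{and} $\{a\}^{\perp}=M_0(e)$. You instead take the hypothesis to be $r_{M^{**}}(a)\in M$, i.e.\ that the range tripotent computed in the bidual already lies in $M$. These are different objects: in a commutative Rickart C$^*$-algebra $C(X)$ and for a positive $a$ whose cozero set is not closed, $R(a)$ is the (clopen) closure of the cozero set while $r_{M^{**}}(a)$ is the characteristic function of the cozero set itself, which lives only in $M^{**}$. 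So the intermediate goal you set yourself in the third paragraph --- proving that the woR property forces $r_{M^{**}}(a)\in M$ --- is not only unproved (you explicitly leave it as ``where the real work lies'') but is actually false in general, as your own $C([0,1])$-type discussion almost reveals. Consequently the clean argument of your first two paragraphs, which is correct \emph{if} $r_{M^{**}}(a)$ happens to be a projection of $M$, never gets fed the object the proposition actually provides.

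What is missing is precisely the paper's key step: starting from the tripotent $e=R_M(a)$ supplied by Lemma \ref{l existence of range tripotents in woRtriples}, one must show that $e$ is a \emph{projection} of the JB$^*$-algebra $M$ (not that it equals $r_{M^{**}}(a)$). The paper does this in two moves that do not appear in your proposal: first, since the involution of $M$ is a conjugate-linear triple automorphism, $e^*$ also satisfies the defining properties of $R_M(a)$, so uniqueness gives $e=e^*$ and hence $e=p-q$ for orthogonal projections $p,q\in M$; second, one computes $U_q(a)=\{-q,a^{*_e},-q\}$ inside $M_2(e)$ to see that $U_q(a)$ is simultaneously negative (by positivity of $a$ in $M_2(e)$ and the form $-U_y(q)$) and positive (since $a\geq 0$ in $M$ and $q$ is a projection), forcing $U_q(a)=0$, then $q\perp a$, then $q\in M_0(e)$, then $q=0$. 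Once $e$ is known to be a projection, the verification that $e\circ a=a$ and that $U_z(a)=0$ implies $e\circ z=0$ (via Lemma \ref{l positive sets and annihilators}) is exactly the routine part you already carried out; but without the self-adjointness and $q=0$ arguments the proof of the proposition is not achieved. Your closing suggestion that the difficulty is to show $M_2(e)$ is weakly Rickart also points in the wrong direction; that is the content of the later Proposition \ref{p woR implies wR for each Peirce-2}, which relies on the present one rather than the other way around.
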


Before presenting the proof, we establish a result proving the existence of range tripotents for elements in woR JB$^*$-triples. 

\begin{lemma}\label{l existence of range tripotents in woRtriples} Let $E$ be a JB$^*$-triple. Then the following statements hold:
\begin{enumerate}[$(a)$]
\item If $a$ is an element in $E$ and $e,v$ are two tripotents in $E$ such that $a$ is positive in $E_2(e)$ and in $E_2(v)$ with $\{a\}^{\perp} = E_0(e)$, then $e\leq v;$
\item Let us assume that $E$ is a woR JB$^*$-triple. Then for each element $a$ in $E$ there exists a unique tripotent $e\in E$ satisfying that $a$ is positive in $E_2(e)$ and $\{a\}^{\perp} = E_0(e)$.
\end{enumerate}
\end{lemma}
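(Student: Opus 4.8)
The plan is to deduce (b) from (a) and to prove (a) by a common–square–root computation that is entirely local to the inner ideal $E(a)$.

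For the existence statement in (b), given $a\in E$ I would take $J:=\{a\}^{\perp}$, which is a (norm closed) inner ideal by Lemma \ref{l basic prop annihilator}, and observe that $E(a)\perp J$ because any element orthogonal to $a$ is orthogonal to the whole inner ideal $E(a)$. Applying the defining property of a woR JB$^*$-triple to $x=a$ and this $J$ yields a tripotent $e$ with $a$ positive in $E_2(e)$ and $\{a\}^{\perp}=J\subseteq E_0(e)$; the reverse inclusion $E_0(e)\subseteq\{a\}^{\perp}$ is automatic, since $a\in E_2(e)$ forces $E_0(e)\perp a$ by the Peirce relations. Hence $\{a\}^{\perp}=E_0(e)$. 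Uniqueness is then immediate from (a): if $e$ and $e'$ both satisfy the conclusion, then (a) applied to the pair $(e,e')$ gives $e\le e'$, and the symmetric application (using $\{a\}^{\perp}=E_0(e')$) gives $e'\le e$, so $e=e'$.

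The heart of the matter is (a), and the key observation I would use is that the positive square root $a^{1/2}$ of $a$, formed inside the inner ideal $E(a)$, lies in $E_2(e)\cap E_2(v)$ and is simultaneously the positive square root of $a$ in both JB$^*$-algebras $E_2(e)$ and $E_2(v)$. Here one uses that $E(a)$ is a JB$^*$-subalgebra of $E_2(u)$ for each $u\in\{e,v\}$ and that the Jordan products and involutions of $E_2(e)$ and $E_2(v)$ restrict to the \emph{same} operations on $E(a)$; concretely, if $r$ denotes the range tripotent of $a$ (so $r\le e$ and $r\le v$ by positivity) the Peirce-$0$ part of $u$ relative to $r$ drops out of the relevant triple products, so that $\circ_e$ and $\circ_v$ agree on $E(a)$. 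Since $u$ is the unit of $E_2(u)$, this gives $\{a^{1/2},a^{1/2},u\}=a^{1/2}\circ_u a^{1/2}=a$ for $u=e$ and $u=v$. Subtracting yields $\{a^{1/2},a^{1/2},e-v\}=0$, i.e. $L(a^{1/2},a^{1/2})(e-v)=0$, and by the standard equivalence $\{x,x,y\}=0\Leftrightarrow x\perp y$ (see \cite{BurFerGarMarPe2008}) this forces $a^{1/2}\perp(e-v)$, hence $a\perp(e-v)$ and therefore $e-v\in\{a\}^{\perp}=E_0(e)$.

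From here I would simply read off the Peirce decomposition of $v$ relative to $e$. Since $e-v\in E_0(e)$ while $e\in E_2(e)$, applying $P_2(e)$ and $P_1(e)$ gives $P_2(e)v=e$ and $P_1(e)v=0$, so that $v=e+v_0$ with $v_0:=P_0(e)v\in E_0(e)$. Because $E_0(e)=\{e\}^{\perp}$ we have $e\perp v_0$, and expanding $\{v,v,v\}=v$ using $e\perp v_0$ (all mixed terms vanish) shows that $v_0$ is itself a tripotent. Thus $v-e=v_0$ is a tripotent orthogonal to $e$, which is precisely the relation $e\le v$. The only genuinely delicate point I anticipate is the justification that $a^{1/2}$ may be chosen once and for all in $E(a)$ so as to be the positive square root in \emph{both} $E_2(e)$ and $E_2(v)$ — that is, the compatibility of the two JB$^*$-algebra structures on $E(a)$ — together with the orthogonality characterization $\{x,x,y\}=0\Leftrightarrow x\perp y$; once these are in place the remaining Peirce arithmetic is routine.
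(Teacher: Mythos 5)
Your proof is correct, and its skeleton is the same as the paper's: both arguments reduce part (a) to showing $a\perp (e-v)$, conclude that $e-v\in\{a\}^{\perp}=E_0(e)$, and then read off $e\leq v$; part (b) is existence via the woR axiom applied to $J=\{a\}^{\perp}$ together with a double application of (a). The only real divergence is in how the key orthogonality is obtained. The paper avoids square roots altogether: since $a$ is positive in $E_2(e)$ and in $E_2(v)$, the range tripotent $r=r_{E^{**}}(a)$ satisfies $r\leq e$ and $r\leq v$ in $E^{**}$, whence $a\perp(e-r)$ and $a\perp(v-r)$ by the Peirce rule $\{E_2(r),E_0(r),E\}=0$, and therefore $\{a,a,e\}=\{a,a,r\}=\{a,a,v\}$, giving $\{a,a,e-v\}=0$ directly. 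Your detour through a common square root $a^{1/2}\in E(a)$ proves the same identity at one remove, and the ``delicate point'' you flag (that $\circ_e$ and $\circ_v$ restrict to the same C$^*$-structure on $E(a)$) is genuine but resolvable exactly as you sketch --- it again rests on $r\leq e,v$ in the bidual and the same Peirce rule --- so your argument is not wrong, just less economical. Your final step, verifying by hand that $v-e=P_0(e)(v)$ is a tripotent orthogonal to $e$, replaces the paper's citation of Lemma 1.6/Corollary 1.7 of \cite{FriRu85} and is a perfectly good elementary substitute. You also spell out the existence half of (b), which the paper compresses into the phrase ``both exist by the assumptions on $E$''; your version of that step is the intended one.
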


\begin{proof} $(a)$ Let $e$ and $v$ be tripotents in $E$ satisfying the properties in the statement. Let $r_{{E^{**}}}(a)$ denote the range tripotent of $a$ in $E^{**}$. Since $a$ is positive in $E_2(e)\subseteq E_2^{**}(e),$ it follows that $a$ is positive in the JBW$^*$-algebra $E^{**}_2(e),$ and hence $r_{{E^{**}}}(a)\leq e$ as tripotents in $E^{**}.$ Therefore $e = r_{{E^{**}}}(a) + (e-r_{{E^{**}}}(a))$ with $r_{{E^{**}}}(a)\perp (e-r_{{E^{**}}}(a))$, and hence $\{a,a,e\} = \{ a, a, r_{{E^{**}}}(a)\}.$ Similarly,  $\{a,a,v\} = \{ a, a, r_{{E^{**}}}(a)\}.$  It then follows that the triple product $\{a,a,e-v\} = 0$ in $E$, or equivalently, $a\perp (e-v)$, that is, $e-v\in \{a\}^{\perp}$. The assumptions on $e$ imply that $e -\{e,e,v\} = \{e,e,e-v\}=0,$ or equivalently, $\{e,e,v\} = e$.  Lemma 1.6 or Corollary 1.7 in \cite{FriRu85} implies that $v\geq e$.\smallskip

$(b)$ Let $e$ and $v$ satisfying the hypotheses in $(b)$ (both exist by the assumptions on $E$). It follows from $(a)$ that $e\leq v$ and $v\leq e$. Therfore $e=v$ as claimed. 
\end{proof}

Let $a$ be an element in a woR JB$^*$-triple $E$. The unique tripotent $e$ given by Lemma \ref{l existence of range tripotents in woRtriples} is called the \emph{range tripotent} of $a$ in $E$, and will be denoted by $R_{_E} (a)$. It follows from Lemma \ref{l existence of range tripotents in woRtriples}$(a)$ that $R_{_E}(a)$ is the smallest tripotent $e$ in $E$ sastisfying that $a$ is positive in the unital JB$^*$-algebra $E_2(e)$. \smallskip

Let us briefly recall that for each self-adjoint element $h$ in a JB$^*$-algebra $M,$ the mapping $U_{h}$ is positive on $M$, that is, it maps positive elements to positive elements \cite[Proposition 3.3.6]{HOS}.

\begin{proof}[Proof of Proposition \ref{p woR JBstar triple implies wR JBstar algebra for Peirce-2}] Let us fix a positive element $a$ in $M$. Let $e= R_{_E} (a)$ denote the range tripotent of $a$ in $E$. Since the involution on $M$ is a conjugate linear triple automorphism on $M$ we have $0\leq a = a^*$ in $M_2(e^*)$ and $$\{a\}^{\perp} = \{a^*\}^{\perp} = \left(\{a\}^{\perp} \right)^{*} = \left(M_0(e)\right)^* = M_0(e^*),$$ witnessing that $e^*$ satisfies the properties of the range tripotent of $a$ in $M,$ and by the uniqueness of this element $e= e^*$. That is, $e$ is self-adjoint tripotent in $M$, and thus, by the local Gelfand theory, $e= p-q,$ where $p$ and $q$ are two orthogonal projections in $M$.\smallskip

It follows from the properties of the range tripotent $e= p-q$ that $0\leq a$ in $M_2(e).$ Since $0\leq -q\leq e$ in $M_2(e)$, the element $-q$ is a projection in $M_2(e).$ Therefore, having in mind that, by Kaup's theorem, the triple product on $M_2(e)$ is uniquely given by the restriction of the triple product of $M$ and by the JB$^*$-structure of $M_2(e)$,  the element $$U_{-q}^{M_2(e)}(a) = \{-q, a^{*_e} ,-q\} = \{-q, a,-q\}= \{q, a,q\} = U_{q}(a)$$ is positive in $M_2(e)$ (cf. \cite[Proposition 3.3.6]{HOS}), and in $M_2(-q).$ Since, $M_2(-q) = M_2(q)$ with $\left(M_2(-q)\right)_{sa} = \left(M_2(q)\right)_{sa}$ we deduce the existence of $y\in \left(M_2(-q)\right)_{sa} = \left(M_2(q)\right)_{sa}\subseteq M_{sa}$ such that $$U_{q}(a) = y\circ_{-q} y = \{y, -q, y\} = -\{y,q,y\} = - U_y (q),$$ which implies that $U_{q}(a)$ is a negative element in $M$. \smallskip

On the other hand, since $a$ is positive in $M$ and $q$ is a projection, the element $U_q(a)$ must be positive in $M$ \cite[Proposition 3.3.6]{HOS}, which combined with the previous conclusion leads to $U_q (a) =0.$  It follows from the first statement in Lemma \ref{l positive sets and annihilators} that $q\in \{a\}^{\perp_q}\cap M_{sa} = \{a\}^{\perp} \cap M_{sa}$, that is, $q\perp a$. The properties of the range tripotent imply that $q\in M_0(e) = M_0(p-q),$ and thus $q\perp (p-q),$ and so $q=0.$\smallskip

We have therefore shown that the range tripotent $e=R_{_M}(a)$ of $a$ in $M$ is a projection in this JB$^*$-algebra. It can be easily checked that $e\circ a = \{e,e,a\} =a$ and for each $z\in M_{sa}$ with $U_z (a) =0$ we have $p\circ z =0$ (cf. Lemma \ref{l positive sets and annihilators}), that is $M$ is a weakly Rickart JB$^*$-algebra. 
\end{proof}

An element $u$ in a unital JB$^*$-algebra $M$ is called a \emph{unitary} if it is invertible with inverse $u^*$. In the setting of JB$^*$-triples, the word \emph{unitary} is applied to those elements $u$ such that $L(u,u)$ is the idenity mapping. Clearly, every unitary $u$ in a JB$^*$-triple $E$ is a tripotent with $E_2(u)=E$ --this is actually a characterization. There is no ambiguity in case that a unital JB$^*$-algebra $M$ is regarded as a JB$^*$-triple because both notions are equivalent \cite[Proposition 4.3]{BraKaUp78}. \smallskip




Our next result is a strengthened version of  Proposition \ref{l existence of range tripotents in woRtriples}. We recall first that for each tripotent $e$ in a JB$^*$-triple $E$ and each unitary complex number $\lambda,$ the mapping \begin{equation}\label{eq Slambda is a triple automorphism} S_{\lambda}(e) = \lambda^2 P_2(e) + \lambda P_1(e) + P_0(e)\end{equation} is a triple automorphism on $E$ \cite[Lemma 1.1]{FriRu85}. It can be easily deduced from this fact that the mapping \begin{equation}\label{eq Rlambda is a triple automorphism} R_{\lambda}(e) = P_2(e) + \lambda P_1(e) + \lambda^2 P_0(e)\end{equation} also is a triple automorphism on $E$.

\begin{proposition}\label{p woR implies wR for each Peirce-2} Let $E$ be a woR JB$^*$-triple. Then for each tripotent $e\in E,$ the Peirce-2 subspace $E_2(e)$ is a Rickart JB$^*$-algebra.
\end{proposition}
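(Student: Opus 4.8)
The plan is to deduce the statement from the reduction already available in Proposition \ref{p woR JBstar triple implies wR JBstar algebra for Peirce-2} together with Lemma \ref{l weak Rickart and weak Baer}. Set $M := E_2(e)$, which is a \emph{unital} JB$^*$-algebra with unit $e$. According to Proposition \ref{p woR JBstar triple implies wR JBstar algebra for Peirce-2}, in order to show that $M$ is a weakly Rickart JB$^*$-algebra it is enough to check that every positive element of $M$ admits a range tripotent lying inside $M$; and once $M$ is both weakly Rickart and unital, Lemma \ref{l weak Rickart and weak Baer} immediately upgrades this to the desired conclusion that $M = E_2(e)$ is a Rickart JB$^*$-algebra. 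So the entire problem reduces to producing, for each positive $a\in E_2(e)$, a range tripotent of $a$ computed inside $E_2(e)$.

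First I would fix a positive element $a$ of the JB$^*$-algebra $M=E_2(e)$ and view it as an element of the ambient woR JB$^*$-triple $E$. Since $E$ is woR, Lemma \ref{l existence of range tripotents in woRtriples}$(b)$ furnishes a unique tripotent $u := R_{_E}(a)\in E$ such that $a$ is positive in $E_2(u)$ and $\{a\}^{\perp}=E_0(u)$. The central step is to show that $u$ already lies in $E_2(e)$ and acts as a range tripotent of $a$ relative to $M$. For the containment, I would apply Lemma \ref{l existence of range tripotents in woRtriples}$(a)$ with $u$ in the role of the tripotent whose Peirce-$0$ space equals $\{a\}^{\perp}$ and with $e$ as the comparison tripotent: indeed $a$ is positive in $E_2(u)$ and in $E_2(e)$, while $\{a\}^{\perp}=E_0(u)$, so the lemma yields $u\leq e$. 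Hence $u$ is a projection in the JB$^*$-algebra $E_2(e)$, so $u\in M$ and $E_2(u)\subseteq E_2(e)$.

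It then remains to verify that $u$ satisfies the two defining properties of a range tripotent \emph{inside} $M$. Because $E_2(e)$ is a JB$^*$-subtriple of $E$, the operator $L(u,u)$ and the triple product on $M$ are restrictions of those on $E$; consequently the Peirce spaces of $u$ relative to $M$ satisfy $M_2(u)=E_2(u)$ and $M_0(u)=M\cap E_0(u)$, and orthogonality among elements of $M$ can be tested inside $M$ (it is the triple-product condition $\{y,y,a\}=0$, which is intrinsic to the subtriple). Using these identifications, $a$ is positive in $M_2(u)=E_2(u)$, and the orthogonal annihilator of $a$ relative to $M$ equals $M\cap\{a\}^{\perp}=M\cap E_0(u)=M_0(u)$, so $u$ is a range tripotent of $a$ in $M$. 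Having produced such range tripotents for all positive $a\in M$, Proposition \ref{p woR JBstar triple implies wR JBstar algebra for Peirce-2} makes $E_2(e)$ a weakly Rickart JB$^*$-algebra, and Lemma \ref{l weak Rickart and weak Baer} concludes the proof. I expect the only genuinely delicate point to be the passage between the ambient triple $E$ and the subtriple $E_2(e)$: namely establishing $u\leq e$ via Lemma \ref{l existence of range tripotents in woRtriples}$(a)$ and confirming that the Peirce decomposition and orthogonality of $u$ relative to $E_2(e)$ coincide with the ambient ones, so that the range-tripotent property descends from $E$ to $M$; the remaining steps are a direct application of the two cited lemmas and the reduction proposition.
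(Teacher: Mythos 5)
Your argument is correct, and its overall skeleton coincides with the paper's: both reduce the problem, via Proposition \ref{p woR JBstar triple implies wR JBstar algebra for Peirce-2} and Lemma \ref{l weak Rickart and weak Baer}, to producing a range tripotent of each positive $a\in E_2(e)$ \emph{inside} $E_2(e)$. Where you genuinely diverge is in the key step showing that $u=R_{_E}(a)$ lands in $E_2(e)$. You apply Lemma \ref{l existence of range tripotents in woRtriples}$(a)$ with $u$ in the role of the tripotent whose Peirce-$0$ space equals $\{a\}^{\perp}$ and the given $e$ as the comparison tripotent, which legitimately yields the stronger conclusion $u\leq e$ in one stroke (the hypotheses are met: $a$ is positive in $E_2(u)$ by definition of the range tripotent and positive in $E_2(e)$ by assumption). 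The paper instead uses the triple automorphisms $S_{-1}(e)$ and $R_{i}(e)$ from \eqref{eq Slambda is a triple automorphism} and \eqref{eq Rlambda is a triple automorphism}: since these fix $a\in E_2(e)$, they send $R_{_E}(a)$ to another tripotent with the same defining properties, so uniqueness forces first $P_1(e)(R_{_E}(a))=0$ and then $P_0(e)(R_{_E}(a))=0$, giving only the membership $R_{_E}(a)=P_2(e)(R_{_E}(a))\in E_2(e)$ (which suffices because $E_2(e)$ is an inner ideal). Your route is shorter, reuses a lemma already proved rather than introducing the automorphism trick, and delivers the order relation $u\le e$ directly, from which the identifications $M_2(u)=E_2(u)$ and $M_0(u)=M\cap E_0(u)$ that you need for the descent of the range-tripotent property to $M=E_2(e)$ follow exactly as you describe. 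Both proofs are complete; yours is a clean alternative for the central step.
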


\begin{proof} Having in mind Proposition \ref{p woR JBstar triple implies wR JBstar algebra for Peirce-2} and Lemma \ref{l weak Rickart and weak Baer}$(a)$, it suffices to show that each positive element $a$ in $E_2(e)$ admits a range tripotent in $E_2(e).$  Let $v = R_{_E}(a)$ be the range tripotent of $a$ in $E$. Let $S_{-1} = S_{-1} (e) = P_2(e) - P_1(e) + P_0(e)$ the triple automorphism on $E$ given in \eqref{eq Slambda is a triple automorphism}. Let us observe that $S_{-1} (a) =a$ because $a\in E_2(e).$

Since $a$ is positive in $E_2(v)$ with $\{a\}_{E}^{\perp} = E_0(v),$ we deduce that $a = S_{-1} (a)$ is positive in $E_2(S_{-1}(v))$ with $$\{a\}_{E}^{\perp} = \{S_{-1}(a) \}_{E}^{\perp} = S_{-1} \left(\{a\}_{E}^{\perp}\right) = S_{-1} \left(E_0(v)\right) = E_0(S_{-1}(v)).$$ That is, $S_{-1} (v)$ satisfies the properties of the range tripotent for $a$, and hence it follows from its uniqueness that $v = S_{-1} (v) = P_2(e) (v) -P_1(e) (v) + P_0(e) (v).$ This equality proves that $v = P_2(e) (v) + P_0(e) (v),$ where $P_2(e) (v) $ and $P_0(e) (v)$ are two orthogonal tripotents in $E.$ \smallskip

If in the previous argument we replace $S_{-1} (e)$ with $R_{i} (e)$, and we apply it to $v = P_2(e) (v) + P_0(e) (v),$ we derive that $v = R_{i} (e) (v) = P_2(e) (v) - P_0(e) (v)$, witnessing that $v = P_2(e) (v).$ Now, it can be easily seen that $v = P_2(e) (v)\in E_2(e)$ satisfies the properties of the range tripotent for $a$ in $E_2(e)$ (and in $E$). This concludes the proof.    
\end{proof}

We can now establish the result which has motivated our study. We shall see that every woR JB$^*$-triple contains an abundant collection of tripotents. 

\begin{theorem}\label{t woR JBstriples are generated by its tripotents} Every weakly order Rickart JB$^*$-triple is generated by its tripotents.
\end{theorem}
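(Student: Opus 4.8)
The plan is to localise the problem inside a single Peirce-2 subspace, where the Jordan-algebraic machinery of Section \ref{subsec: algebraic Jordan Rickart and Baer} becomes directly applicable. Fix an arbitrary element $a\in E$; the goal is to approximate $a$ in norm by finite linear combinations of tripotents of $E$. First I would invoke Lemma \ref{l existence of range tripotents in woRtriples}$(b)$ to produce the range tripotent $e = R_{_E}(a)$, so that $a$ lies in $E_2(e)$ and is positive in the unital JB$^*$-algebra $E_2(e)$. The point of this step is that, although $E$ carries no global cone of positive elements, the woR hypothesis lets us embed $a$ into a Peirce-2 subspace in which it \emph{is} positive.

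Next I would apply Proposition \ref{p woR implies wR for each Peirce-2}, which asserts that $E_2(e)$ is a Rickart JB$^*$-algebra, hence in particular a weakly Rickart JB$^*$-algebra. Theorem \ref{t weakly Rickart JBstar algebras contain an abundant set of projections} then guarantees that $E_2(e)$ is generated by its projections, so the positive element $a\in E_2(e)$ is a norm-limit of finite linear combinations of projections of the JB$^*$-algebra $E_2(e)$.

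The concluding step is to recognise that every such projection is a tripotent of the ambient triple $E$. Indeed, $E_2(e)$ is an inner ideal, hence a JB$^*$-subtriple, of $E$, and by Kaup's theorem the triple product of $E_2(e)$ coincides with the restriction of the triple product of $E$; a projection $p$ in the JB$^*$-algebra $E_2(e)$ satisfies $p = p\circ_e p$ and $p = p^{*_e}$, whence $\{p,p,p\} = p$, so $p$ is a tripotent in $E_2(e)$ and therefore in $E$. Consequently $a$ is a norm-limit of finite linear combinations of tripotents of $E$, which is exactly the assertion.

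The verification is essentially bookkeeping once the earlier results are in place: the substantive content has been absorbed into Lemma \ref{l existence of range tripotents in woRtriples}, Proposition \ref{p woR implies wR for each Peirce-2}, and Theorem \ref{t weakly Rickart JBstar algebras contain an abundant set of projections}. The only point requiring minor care is the compatibility of the two triple products on $E_2(e)$, which is what makes approximation by projections internal to the JB$^*$-algebra genuinely an approximation by tripotents of $E$. The hard part of the whole circle of ideas was really the passage from the woR hypothesis to the Rickart structure on each Peirce-2 subspace, not this final assembly.
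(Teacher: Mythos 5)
Your proposal is correct and follows essentially the same route as the paper's own proof: pass to the range tripotent $e=R_{_E}(a)$, use Proposition \ref{p woR implies wR for each Peirce-2} to make $E_2(e)$ a Rickart JB$^*$-algebra, invoke Theorem \ref{t weakly Rickart JBstar algebras contain an abundant set of projections}, and observe that projections of $E_2(e)$ are tripotents of $E$. Your explicit justification of the last step via Kaup's theorem is a welcome (if minor) addition, and your citation of Proposition \ref{p woR implies wR for each Peirce-2} is in fact the intended reference.
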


\begin{proof} Let $a$ be an element in a woR JB$^*$-triple $E$. Let $e= R_{_E}(a)$ be the range tripotent of $a$ in $E$. Proposition \ref{p weak Rickart condition is inherited by Peirce2 projections} assures that $E_2(e)$ is Rickart JB$^*$-algebra. By construction, $a$ is a positive element in $E_2(e)$, and hence Theorem \ref{t weakly Rickart JBstar algebras contain an abundant set of projections} implies that $a$ can be approximated in norm by finite linear combinations of projections in $E_2(e).$ The proof concludes by just observing that since $E_2(e)$ is a JB$^*$-subtriple of $E$, every projection in $E_2(e)$ is a tripotent in $E$. 
\end{proof}

\section{von Neumann regularity}\label{sec: von Neumann regularity}

Regular elements in the sense of von Neumann have been intensively studied in the associative setting of C$^*$-algebras (cf. \cite{HarMb92, HarMb93, BP95} and \cite[\S 3]{Rick46}) as well as in the wider setting of JB$^*$-triples (see \cite{FerGarSanSi92,FerGarSanSi94, Ka96, BurKaMoPeRa, BurMarMorPe16} and \cite{JamPeSiddTah2016}).\smallskip

Motivated by the study conducted by Rickart on von Neumann regular elements in $B_p^*$-algebras (now called Rickart C$^*$-algebras) in \cite[\S 3]{Rick46}, we devote this section to explore von Neumann regular elements in woR JB$^*$-triples.\smallskip

An element $a$ in a JB$^*$-triple $E$ is called \emph{von Neumann regular} if and only if there exists $b\in E$ such that $Q(a)b =a,$ $Q(b)a =b$ and $[Q(a),Q(b)]:=Q(a)\,Q(b) - Q(b)\, Q(a)=0$ (cf.  \cite[Lemma 4.1]{Ka96} or \cite{FerGarSanSi92,FerGarSanSi94, BurKaMoPeRa}). The element $b \in E$ satisfying the previous properties is unique and is called the \emph{generalized inverse} of $a$ in $E$  (denoted by $a^{\dag}$). However, there exist von Neumann regular elements $a\in E$, for which we can find many elements $c$ in $E$ such that $Q(a)c =a$. \smallskip

Several useful characterizations of von Neumann regular elements in JB$^*$-triples can be found in \cite{FerGarSanSi92,FerGarSanSi94, Ka96, BurKaMoPeRa}. For our purposes here, we recall that an element $a$ in a JB$^*$-triple $E$, whose range tripotent in $E^{**}$ is denoted by $r_{{E^{**}}}(a)= r(a),$ we know that $a$ is von Neumann regular if, and only if, $r(a)\in E$ and $a$ is positive and invertible in the unital JB$^*$-algebra $E_2 (r(a)),$ and in such a case $a^{\dag}$ is precisely the inverse of $a$ in $E_2(r(a))$ (cf. \cite[\S 2, pages 191 and 192]{BurKaMoPeRa}). It is further known that in this case $L(a,a^{\dag}) = L(a^{\dag},a) = L (r(a),r(a))$ (see \cite[\S 2, page 192]{BurKaMoPeRa} and \cite[Lemma 3.2]{Ka2001}).\smallskip

The next lemma goes in the line of \cite[Lemma 2.2]{JamPeSiddTah2015} and \cite[Theorem 3.2]{Rick46}.

\begin{lemma}\label{l distance smaller than one from a tripotent} Let $e$ be a tripotent in a JB$^*$-triple $E$. The following statements hold:
\begin{enumerate}[$(a)$]
\item Every invertible element $a$ in the unital JB$^*$-algebra $E_2(e)$ is von Neumann regular in $E$ with $r_{{E^{**}}}(a)$ being a unitary element in $E_2(e).$
\item Suppose that $x$ is an element in $E$ with $\|e-x\|<1$. Then $Q(e)(x)$ and $P_2(e) (x)$ are von Neumann regular elements whose range tripotents {\rm(}i.e. $r(Q(e)(x))$ and $r(P_2(e)(x))$, respectively{\rm)} in $E^{**}$ belong to $E_2(e)$ and are unitaries in the latter JB$^*$-algebra. Moreover, $r(Q(e)(x))$ and $r(P_2(e)(x))$ satisfy the properties of the range tripotent in a woR JB$^*$-triple for the elements $Q(e) (x)$ and $P_2(e) (x)$, respectively. The latter conclusion holds for the range tripotent in $E^{**}$ of any invertible element $a\in E_2(e)$.
\end{enumerate}
\end{lemma}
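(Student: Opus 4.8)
The plan is to prove $(b)$ as a consequence of $(a)$, so I would first establish $(a)$ and then feed two concrete invertible elements into it. For $(a)$, I would start from the recalled fact that $a$ invertible in the unital JB$^*$-algebra $M=E_2(e)$ is equivalent to $U_a$ being a bijection of $M$ (\cite[Theorem 4.1.3]{Cabrera-Rodriguez-vol1}). Since $Q(a)(y)=U_a(y^{*_e})$ and the triple involution $*_e$ is a conjugate-linear bijection of $M$, the conjugate-linear operator $Q(a)$ is a bijection of $E_2(e)$. The key geometric observation is that $E_2(e)$ is an inner ideal, so $Q(a)(E)=\{a,E,a\}\subseteq E_2(e)$; combined with $Q(a)(E_2(e))=E_2(e)$ this forces $Q(a)(E)=E_2(e)$. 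Hence $Q(a)$ has norm-closed range (a Peirce-$2$ subspace is closed) and $a=Q(a)\big((a^{-1})^{*_e}\big)\in Q(a)(E)$, so by the characterizations of von Neumann regularity (\cite{Ka96,BurKaMoPeRa}) the element $a$ is von Neumann regular; consequently $r(a)=r_{E^{**}}(a)\in E$ and $a$ is positive and invertible in $E_2(r(a))$.

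To identify $r(a)$, I would combine two descriptions of the norm-closed inner ideal generated by $a$: on one hand $E(a)=\overline{Q(a)(E)}=E_2(e)$ by the previous step, and on the other hand von Neumann regularity gives $E(a)=E_2(r(a))$. Thus $E_2(r(a))=E_2(e)$, which is precisely the assertion that $r(a)$ is a unitary of the JB$^*$-algebra $E_2(e)$; in particular $r(a)\in E_2(e)$, proving $(a)$. I would flag the one genuine pitfall here: one must \emph{not} argue via the tripotent order that $r(a)\le e$, since for a non-self-adjoint invertible $a$ the range tripotent is the ``unitary part'' of $a$ and need not lie below $e$; the correct control comes exclusively from the inner-ideal identity $E(a)=E_2(e)=E_2(r(a))$.

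For $(b)$, let $x\in E$ with $\|e-x\|<1$. Both $Q(e)$ and $P_2(e)$ are contractions mapping $E$ into $E_2(e)$ and fixing $e$ (for $Q(e)$ this uses \eqref{eq triple product is non-expansive} and $\|e\|=1$, for $P_2(e)$ the non-expansiveness of Peirce projections). Therefore $\|e-Q(e)(x)\|=\|Q(e)(e-x)\|<1$ and $\|e-P_2(e)(x)\|=\|P_2(e)(e-x)\|<1$ as inequalities inside $E_2(e)$, and since an element within distance $<1$ of the unit of a unital JB$^*$-algebra is invertible, $Q(e)(x)$ and $P_2(e)(x)$ are invertible in $E_2(e)$. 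Part $(a)$ then yields at once that they are von Neumann regular with range tripotents lying in, and unitary in, $E_2(e)$.

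It remains to verify that for such a regular element $a$ (with $r:=r(a)\in E$) the range tripotent satisfies the woR conditions of Lemma \ref{l existence of range tripotents in woRtriples}, namely $a$ positive in $E_2(r)$ and $\{a\}^{\perp}=E_0(r)$. Positivity is already given, and the inclusion $E_0(r)\subseteq\{a\}^{\perp}$ is immediate from the Peirce relations since $a\in E_2(r)$. For the reverse inclusion I would use that $\{z\}^{\perp}$ is a norm-closed inner ideal (Lemma \ref{l basic prop annihilator}): if $z\perp a$ then $a\in\{z\}^{\perp}$, hence the inner ideal $E(a)=E_2(r)\subseteq\{z\}^{\perp}$, so in particular $r\in\{z\}^{\perp}$, i.e. $z\in E_0(r)$. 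Applying this to $a=Q(e)(x)$, to $a=P_2(e)(x)$, and to an arbitrary invertible $a\in E_2(e)$ settles $(b)$ together with its final sentence. The main difficulty throughout is the clean identification of $r(a)$ as a unitary of $E_2(e)$ via the inner-ideal calculus of the second paragraph; once $Q(a)(E)=E_2(e)=E_2(r(a))$ is in place, the remaining arguments are routine Peirce arithmetic.
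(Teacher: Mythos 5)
Your proof is correct and follows the same overall architecture as the paper's: establish $(a)$ first, then deduce $(b)$ from the fact that $Q(e)$ and $P_2(e)$ are non-expansive maps into $E_2(e)$ fixing $e$, so $Q(e)(x)$ and $P_2(e)(x)$ lie within distance $<1$ of the unit of $E_2(e)$ and are therefore invertible there. Where you genuinely diverge is in how $(a)$ and the annihilator identity are justified. The paper disposes of $(a)$ by citing \cite[Remark 2.3]{JamPeSiddTah2015}, which already asserts that the range tripotent of an invertible element of a unital JB$^*$-algebra is a unitary of that algebra, and then invokes the regularity characterization recalled before the lemma; likewise the equality $\{P_2(e)(x)\}^{\perp}=E_0(r)$ is quoted from \cite[Lemma 3.2]{BurGarPe11StudiaTriples}. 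You instead derive everything from the inner-ideal computation: bijectivity of $U_a$ on $E_2(e)$ gives $Q(a)(E)=E_2(e)$, hence $E(a)=E_2(e)=E_2(r(a))$, which simultaneously yields regularity (via $a\in Q(a)(E)$), the unitarity of $r(a)$ in $E_2(e)$, and --- since $\{z\}^{\perp}$ is a closed inner ideal containing $a$ and hence all of $E(a)\ni r(a)$ --- the identity $\{a\}^{\perp}=E_0(r(a))$. Your version is more self-contained and the by-product $E(a)=E_2(r(a))$ is genuinely useful; the paper's is shorter because it leans on the two external references. Your cautionary remark that one should not attempt to prove $r(a)\leq e$ is well taken, as that inequality fails for non-self-adjoint invertible elements; the only inessential flourish is the appeal to closedness of $Q(a)(E)$, which is not needed once $a\in Q(a)(E)$ is in hand.
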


\begin{proof}$(a)$ The statement is essentially proved in \cite[Remark 2.3]{JamPeSiddTah2015}. Namely, if $a$ is invertible in $E_2(e),$ the just quoted remark assures that the range tripotent $r= r_{_{E^{**}_2(e)}} (a)$ of $a$ in the bidual of $E_2(e)$ is a unitary element in $E_2(e).$ It is clear that $r$ must be also the range tripotent of $a$ in $E^{**}$ and belongs to $E$. It follows from the characterization of von Neumann regular elements from \cite{BurKaMoPeRa}, seen before this lemma, that $a$ is von Neumann regular in $E$.\smallskip

$(b)$ Since $\|e-x\|<1$ and $Q(e)$ and $P_2(e)$ are non-expansive mappings fixing the element $e$, we get $\|e-Q(e)(x)\|, \|e-P_2(e)(x)\|<1$. Having in mind that $E_2(e)$ is a unital JB$^*$-algebra with unit $e$ and $Q(e) (x), P_2(e) (x)\in E_2(e)$, we deduce that these two elements are invertible in $E_2(e).$ The first part of the statement now follows from $(a)$.\smallskip 

We shall only prove the last statement for $P_2(e) (x).$ To simplify the notation, let $r= r(P_2(e)(x))\in E_2(e)$ denote the range tripotent of $P_2(e)(x)$. Clearly, $P_2(e)(x)$ is positive in $E_2(r)$ (let us note that $E_2(r) = E_2(e)$ as sets because $r$ is a unitary in $E_2(e)$). Finally, it follows Lemma 3.2 in \cite{BurGarPe11StudiaTriples} that $\{P_2(e)(x)\}^{\perp} = E_0 (r)$, which concludes the argument.
\end{proof}

The next result is a triple version of \cite[Theorem 3.3]{Rick46}.

\begin{proposition}\label{p Rickart thm33} Let $E$ be a woR JB$^*$-triple. Suppose that $a$ is a von Neumann regular element in $E$. Then the range tripotent of $a$ in $E$ as woR JB$^*$-triple coincides with the range tripotent of $a$ in $E^{**}$ {\rm(}and in $E${\rm)}, that is $R(a) = r_{{E^{**}}}(a)$. Furthermore $a^{\dag}\in E_2(R(a))$ is the inverse of $a$ in $E_2(R(a))$ and $R(a^{\dag}) = R(a).$  
\end{proposition}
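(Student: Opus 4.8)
The plan is to leverage the characterization of von Neumann regular elements recalled just before Lemma \ref{l distance smaller than one from a tripotent}, together with the description of $R_{_E}(a)$ as a smallest tripotent provided by Lemma \ref{l existence of range tripotents in woRtriples}. First I would invoke the von Neumann regularity of $a$ to guarantee that $r_{{E^{**}}}(a)$ already lies in $E$ and that $a$ is positive and invertible in the unital JB$^*$-algebra $E_2(r_{{E^{**}}}(a))$, with $a^{\dag}$ equal to the inverse of $a$ in this algebra. This brings the range tripotent computed in the bidual down into $E$, which is the crucial fact making the two notions comparable. Existence of $R(a)=R_{_E}(a)$ (and later of $R(a^{\dag})$) is ensured by Lemma \ref{l existence of range tripotents in woRtriples}$(b)$.

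Next I would prove the equality $R(a) = r_{{E^{**}}}(a)$ by a double inequality using the two minimality characterizations. On one hand, $r_{{E^{**}}}(a)$ is now a tripotent of $E$ for which $a$ is positive in $E_2(r_{{E^{**}}}(a))$; since $R(a)$ is, by Lemma \ref{l existence of range tripotents in woRtriples}$(a)$, the smallest tripotent in $E$ with $a$ positive in its Peirce-$2$ subspace, I obtain $R(a) \leq r_{{E^{**}}}(a)$. On the other hand, $R(a)$ is a tripotent of $E \subseteq E^{**}$, and since $E_2(R(a))$ is a unital JB$^*$-subalgebra of $E^{**}_2(R(a))$ sharing the unit $R(a)$, positivity of $a$ in $E_2(R(a))$ forces positivity of $a$ in $E^{**}_2(R(a))$; minimality of $r_{{E^{**}}}(a)$ among tripotents of $E^{**}$ then yields $r_{{E^{**}}}(a) \leq R(a)$. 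Hence the two tripotents coincide.

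With $R(a) = r_{{E^{**}}}(a) = r(a)$ established, the statement that $a^{\dag} \in E_2(R(a))$ is the inverse of $a$ is immediate from the von Neumann regularity characterization, since $E_2(R(a)) = E_2(r(a))$. For the final equality $R(a^{\dag}) = R(a)$ I would first observe that the defining conditions of von Neumann regularity are symmetric in $a$ and its generalized inverse, so $a^{\dag}$ is itself von Neumann regular (with $(a^{\dag})^{\dag} = a$); the part already proved therefore gives $R(a^{\dag}) = r_{{E^{**}}}(a^{\dag})$. Finally, $a^{\dag}$ is a positive invertible element of $E_2(R(a))$, so by Lemma \ref{l distance smaller than one from a tripotent}$(a)$ its range tripotent $r_{{E^{**}}}(a^{\dag})$ is a unitary of $E_2(R(a))$; being at the same time the range projection of the positive element $a^{\dag}$, it is a projection, and a unitary that is also a projection $p$ in a unital JB$^*$-algebra satisfies $p = p\circ p^{*} = \11$, so it must be the unit $R(a)$. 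Thus $r_{{E^{**}}}(a^{\dag}) = R(a)$ and $R(a^{\dag}) = R(a)$.

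I expect the main obstacle to be the careful bookkeeping between the two ``smallest tripotent'' characterizations living in $E$ and in $E^{**}$, and in particular the verification that positivity of $a$ in the JB$^*$-algebra $E_2(e)$ (for a tripotent $e\in E$) is genuinely equivalent to positivity in $E^{**}_2(e)$; this rests on $E_2(e)$ being a unital JB$^*$-subalgebra of $E^{**}_2(e)$ with common unit $e$, so that the positive cones and the functional calculus agree. Once this compatibility is in place, both displayed inequalities and the concluding unitary-projection argument are routine.
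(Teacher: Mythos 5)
Your argument is correct, and it reaches the key identification $R(a)=r_{{E^{**}}}(a)$ by a route that is genuinely different from the paper's. The paper verifies that $r_{{E^{**}}}(a)$ satisfies \emph{both} defining properties of the range tripotent in a woR JB$^*$-triple --- positivity of $a$ in $E_2(r_{{E^{**}}}(a))$ \emph{and} the annihilator identity $\{a\}^{\perp}=E_0(r_{{E^{**}}}(a))$ --- by invoking the final assertion of Lemma \ref{l distance smaller than one from a tripotent}$(b)$ (which rests on \cite[Lemma 3.2]{BurGarPe11StudiaTriples}), and then concludes from the uniqueness in Lemma \ref{l existence of range tripotents in woRtriples}$(b)$. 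You instead sandwich the two tripotents using the two minimality statements: $R(a)\leq r_{{E^{**}}}(a)$ because $R(a)$ is the smallest tripotent of $E$ whose Peirce-$2$ space contains $a$ as a positive element (Lemma \ref{l existence of range tripotents in woRtriples}$(a)$), while $r_{{E^{**}}}(a)\leq R(a)$ by the corresponding minimality in $E^{**}$; antisymmetry of the order on tripotents finishes the job. This spares you any verification of the annihilator identity, at the modest cost of the positivity-compatibility check between $E_2(e)$ and $E^{**}_2(e)$ that you correctly flag --- and which, in the direction you actually need for the first inequality, is already packaged into the quoted characterization of von Neumann regularity. For the remaining claims your treatment is if anything more complete than the paper's: the paper obtains $a^{\dag}\in E_2(R(a))$ from the fact that $a^{\dag}$ lies in the JB$^*$-subtriple generated by $a$ and leaves $R(a^{\dag})=R(a)$ essentially implicit, whereas you derive it by applying the first part to $a^{\dag}$ (von Neumann regularity being symmetric in $a$ and $a^{\dag}$) and observing that a unitary which is also a projection in the unital JB$^*$-algebra $E_2(R(a))$ must equal the unit $R(a)$.
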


\begin{proof} We know from Lemma \ref{l distance smaller than one from a tripotent}$(b)$ that the range tripotent $r(a)$ satisfies the properties of the range tripotent of $a$ in the definition of woR JB$^*$-triple. Then the uniqueness of $R(a)$ (see Lemma \ref{l existence of range tripotents in woRtriples}$(b)$) implies that $R(a) = r(a).$\smallskip

It is known that $r=r(a)=R(a)$ and $a^{\dag}$ both belong to the JB$^*$-subtriple of $E$ generated by $a$ (cf. \cite[Lemma 3.2]{Ka2001}), and hence $a^{\dag} \in E_2(R(a)).$ Finally, we know from the properties of the generalized inverse that $a^{\dag}$ is the inverse of $a$ in $E_2(r)$.
\end{proof}

As we have seen in subsection \ref{subsec: background}, for each element $a$ in a JB$^*$-triple $E$, its triple spectrum $\Omega_a\subseteq [0,\|a\|]$ can be employed to identify the JB$^*$-subtriple, $E_a$, of $E$ generated by $a$ with the commutative C$^*$-algebra $C_0(\Omega_{a}),$ and under this identification $a$ corresponds to the continuous function given by the embedding of $\Omega_{a}$ into $\mathbb{C}$ (cf. \cite[Corollary 1.15]{Ka} and \cite[Lemma 3.2]{Ka96}). The triple spectrum $\Omega_a$ does not change when computed with respect to any JB$^*$-subtriple $F$ of $E$ containing the element $a$ \cite[Proposition 3.5$(vi)$]{Ka96}.  It is further known that $a$ is von Neumann regular if and only if $0\notin \Omega_a$ (cf. \cite[Lemma 4.1]{Ka96}). In particular if $F$ is a JB$^*$-subtriple of a JB$^*$-triple $E$, then an element $a\in F$ is von Neumann regular in $F$ if and only if it is von Neumann regular in $E$. Furthermore, if $a\in E$ is von Neumann regular, then $a^{\dag}$ and $r(a)$ both belong to the JB$^*$-subtriple of $E$ generated by $a$.\smallskip

Our next goal is a triple version of \cite[Theorem 3.13]{Rick46} and a refinement of Theorem \ref{t woR JBstriples are generated by its tripotents}.

\begin{proposition}\label{p approximation by regular elements} Let $E$ be a woR JB$^*$-triple. Suppose $a$ is an element in $E$ whose range tripotent is $R(a).$ Then for each $\varepsilon>0$ there exists a tripotent $e_{\varepsilon}\in E$ and an element $b$ in the JB$^*$-subtriple of $E$ generated by $a$ satisfying $e_{\varepsilon} \leq R(a),$ $\{b,R(a),b\}=a,$ $ \{b, e_{\varepsilon}, b\}$ is von Neumann regular and $\|a - \{b, e_{\varepsilon}, b\} \| < \varepsilon.$
\end{proposition}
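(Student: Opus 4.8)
The plan is to transport everything into the Peirce-$2$ space of the range tripotent, where a genuine Rickart JB$^*$-algebra structure is available. Write $e := R(a)$ for the range tripotent of $a$ in $E$. By Lemma~\ref{l existence of range tripotents in woRtriples} the element $a$ is positive in the unital JB$^*$-algebra $E_2(e)$, and Proposition~\ref{p woR implies wR for each Peirce-2} guarantees that $E_2(e)$ is a Rickart JB$^*$-algebra. First I would produce $b$: take $b := a^{1/2}$, the positive square root of $a$ computed in the JB$^*$-algebra $E_2(e)$. Because the continuous functional calculus of $E_2(e)$ is compatible with the Gelfand identification $E_a \cong C_0(\Omega_a)$ (under which $a$ is the function $t\mapsto t$), the element $b$ corresponds to $t\mapsto\sqrt{t}\in C_0(\Omega_a)$, so $b$ lies in the JB$^*$-subtriple $E_a$ generated by $a$, as demanded. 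Since $e$ is the unit of $E_2(e)$ and this Peirce space is a subtriple of $E$, the product $\{b,R(a),b\}$ may be computed inside $E_2(e)$, where it equals $b\circ_e b = a$; this settles $\{b,R(a),b\}=a$.

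Next I would construct the cut-off tripotent. Fix $\varepsilon>0$ and choose $0<\delta<\varepsilon$. In the Rickart JB$^*$-algebra $E_2(e)$ the self-adjoint element $a-\delta e$ has a positive part $(a-\delta e)^+$, and by Remark~\ref{r existence and uniqueness of the range projection in wRickart JBalgebras} this positive element admits a range projection; set $e_\varepsilon$ to be that range projection. Then $e_\varepsilon$ is a projection of $E_2(e)$, hence a tripotent of $E$, and since it sits below the unit $e$ we obtain $e_\varepsilon\leq R(a)$. Because $(a-\delta e)^+$ operator commutes with $a$, Lemma~\ref{l strongly associativity is inherited by the range projection} shows that $e_\varepsilon$ operator commutes with $a$, and hence with $b$. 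Consequently $\{b,e_\varepsilon,b\}=U_b(e_\varepsilon)=b^2\circ_e e_\varepsilon = a\circ_e e_\varepsilon$; inside the commutative JB$^*$-subalgebra generated by $a$ this is exactly the element corresponding to $t\,\chi_{(\delta,\infty)}(t)$.

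It remains to verify the two analytic properties of $x:=\{b,e_\varepsilon,b\}=a\circ_e e_\varepsilon$. For the norm estimate, $a-x = a\circ_e(e-e_\varepsilon)$ corresponds to $t\,\chi_{[0,\delta]}(t)$, whence $\|a-x\|\leq\delta<\varepsilon$. For von Neumann regularity, observe that $x$ is supported by $e_\varepsilon$, so it belongs to the unital JB$^*$-algebra $E_2(e_\varepsilon)$; there it is positive with spectrum contained in $[\delta,\|a\|]$, hence bounded below by $\delta>0$ and therefore invertible in $E_2(e_\varepsilon)$. By Lemma~\ref{l distance smaller than one from a tripotent}$(a)$ every invertible element of such a Peirce-$2$ JB$^*$-algebra is von Neumann regular in $E$, so $x$ is von Neumann regular, completing the argument.

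The step I expect to require the most care is the identification $b=a^{1/2}\in E_a$: one must match the continuous functional calculus of the JB$^*$-algebra $E_2(e)$ with the triple-functional-calculus description $E_a\cong C_0(\Omega_a)$, using that for the positive element $a$ the odd triple powers $a^{[2n-1]}$ coincide with the ordinary Jordan powers $a^{2n-1}$, so that both calculi produce the same continuous functions of $a$ vanishing at $0$; in particular the function $\sqrt{\,\cdot\,}$ yields an element of $E_a$. Once this compatibility is in place, identifying $x$ with $t\,\chi_{(\delta,\infty)}(t)$ and reading off its spectrum inside $E_2(e_\varepsilon)$ are routine.
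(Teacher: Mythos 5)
Your proof is correct, but at the key step it takes a more self-contained route than the paper. The paper also reduces to the Rickart JB$^*$-algebra $E_2(R(a))$ and takes $b$ to be the square root of $a$ inside $E_a$, but it produces the cut-off projection differently: it passes to a maximal strongly associative JB$^*$-subalgebra $C$ of $E_2(R(a))$ containing $a$, uses Lemma \ref{l strongly associativity is inherited by the range projection} to see that $C$ is a commutative Rickart C$^*$-algebra, and then quotes Theorem 3.13 of \cite{Rick46} as a black box to obtain a projection $e_{\varepsilon}\leq R(a)$ with $\{e_{\varepsilon},a,e_{\varepsilon}\}$ von Neumann regular and within $\varepsilon$ of $a$. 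You instead construct $e_{\varepsilon}$ explicitly as the range projection of $(a-\delta e)^+$ in the Rickart JB$^*$-algebra $E_2(R(a))$ and verify the norm bound and regularity by hand; this amounts to re-proving the relevant special case of Rickart's theorem, and it uses Lemma \ref{l strongly associativity is inherited by the range projection} for a different purpose (operator commutativity of $e_{\varepsilon}$ with $a$ and $b$, rather than showing that a maximal strongly associative subalgebra inherits the Rickart property). The trade-off is that the paper's argument is shorter on the page but leans on an external citation, while yours is longer but internal. One small imprecision worth flagging: $e_{\varepsilon}$ need not lie in the C$^*$-subalgebra generated by $a$, so identifying $a\circ_e e_{\varepsilon}$ with the function $t\mapsto t\,\chi_{(\delta,\infty)}(t)$ is only heuristic; the accurate picture, in a commutative C$^*$-algebra containing both $a$ and $e_{\varepsilon}$, is $a\cdot\chi_{K}$ with $\{a>\delta\}\subseteq K\subseteq\{a\geq\delta\}$ and $K$ clopen --- but both conclusions you extract ($\|a-a\circ_e e_{\varepsilon}\|\leq\delta$ and invertibility of $a\circ_e e_{\varepsilon}$ in $E_2(e_{\varepsilon})$) survive this correction.
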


\begin{proof} Proposition \ref{p woR implies wR for each Peirce-2} assures that $E_2(R(a))$ is Rickart JB$^*$-algebra. By definition, $a$ is positive in $E_2(R(a)).$ Let $C$ be a maximal strongly associative JB$^*$-subalgebra of $E_2(R(a))$ containing $a$. Lemma \ref{l strongly associativity is inherited by the range projection} implies that $C$ is a Rickart JB$^*$-algebra. Therefore $C$ is a commutative Rickart C$^*$-algebra whose product and involution will be denoted by $\cdot$ and $*$, respectively --observe that $*$ coincides with ${*_{R(a)}}$. \smallskip

Given $\varepsilon>0,$ having in mind that $C$ is a commutative C$^*$-algebra, Theorem 3.13 in \cite{Rick46} proves the existence of a projection $e_{\varepsilon}\in C$ satisfying $e_{\varepsilon}\leq R(a)$, $e_{\varepsilon}\cdot a= \{e_{\varepsilon}, a, e_{\varepsilon}\} = P_2(e_{\varepsilon}) (a)$ is von Neumann regular in $C$ and $\|a-P_2(e_{\varepsilon}) (a)\|<\varepsilon.$\smallskip 

As observed in \cite[comments after Theorem 2.1]{GarPer2}, since $a$ is a positive in $E_2(R(a))$ (and in $C$), the JB$^*$-subtriple $E_a$ of $E_2(R(a))$ (and of $C$) generated by $a$ coincides with the JB$^*$-subalgebra that $a$ generates. Therefore the square root of $a$ in $C$ lies in $E_a$. Let $b\in E_a$ denote the square root of $a$ in $C$. By applying that $C$ is a commutative C$^*$-algebra, it can be deduced that $\{b,e_{\varepsilon}, b\} = (b\cdot b) \cdot e_{\varepsilon} = a\cdot e_{\varepsilon}$ is von Neumann regular in $C$. Clearly,  $\{b,R(a),b\}=a.$\smallskip

Finally, since $C$ is a JB$^*$-subtriple of $E,$ the element $e_{\varepsilon}$ is a tripotent in $E$ with  $e_{\varepsilon}\leq R(a)$, and $\{b, e_{\varepsilon}, b\} $ is von Neumann regular in $E$ and $\|a- \{b, e_{\varepsilon}, b\} \|< \varepsilon.$
\end{proof}

We can now prove that every inner ideal in a woR JB$^*$-triple $E$ contains an abundant collection of von Neumann regular elements.

\begin{theorem}\label{t inner ideals of woR JBstar triples contain a norm dense subset of regular elements} Let $I$ be an inner ideal of a woR JB$^*$-triple $E$. Then the von Neumann regular elements of $I$ are dense in $I$. Each von Neumann regular element $x$ in $I$ is contained in $E_2(R(x)),$ where $R(x)\in I$ and $E_2(R(x))$ is a Rickart JB$^*$-algebra. Furthermore, if $I\neq \{0\},$ then $I$ contains a non-zero tripotent, actually $I$ contains the generalized inverse and the range tripotent of each non-zero element in $I$.  
\end{theorem}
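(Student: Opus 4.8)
The plan is to reduce everything to the three results already obtained in this section: the approximation by regular elements (Proposition \ref{p approximation by regular elements}), the identification of range tripotents of regular elements (Proposition \ref{p Rickart thm33}), and the fact that $E_2(v)$ is Rickart for every tripotent $v$ (Proposition \ref{p woR implies wR for each Peirce-2}). Throughout I would only use that $I$, being an inner ideal, is a norm-closed JB$^*$-subtriple satisfying $\{I,E,I\}\subseteq I$; in particular the JB$^*$-subtriple $E_a$ generated by any $a\in I$ is contained in $I$.

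First I would settle density. Fix $a\in I$ and $\varepsilon>0$. Proposition \ref{p approximation by regular elements} produces a tripotent $e_{\varepsilon}\in E$ and an element $b\in E_a$ with $\{b,e_{\varepsilon},b\}$ von Neumann regular and $\|a-\{b,e_{\varepsilon},b\}\|<\varepsilon$. Since $E_a\subseteq I$ we have $b\in I$, and since $I$ is an inner ideal, $\{b,e_{\varepsilon},b\}=Q(b)(e_{\varepsilon})\in\{I,E,I\}\subseteq I$. Thus $\{b,e_{\varepsilon},b\}$ is a von Neumann regular element of $I$ lying within $\varepsilon$ of $a$, which shows that the von Neumann regular elements of $I$ are norm-dense in $I$.

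Next I would treat an individual von Neumann regular element $x\in I$. As recalled before Lemma \ref{l distance smaller than one from a tripotent} and after Proposition \ref{p Rickart thm33}, both the generalized inverse $x^{\dag}$ and the range tripotent $r_{{E^{**}}}(x)$ lie in the JB$^*$-subtriple $E_x$ generated by $x$, and $E_x\subseteq I$; hence $x^{\dag},r_{{E^{**}}}(x)\in I$. By Proposition \ref{p Rickart thm33} the woR range tripotent $R(x)$ coincides with $r_{{E^{**}}}(x)$, so $R(x)\in I$, the element $x$ is positive in $E_2(R(x))$ (in particular $x\in E_2(R(x))$), and $x^{\dag}$ is the inverse of $x$ in the unital JB$^*$-algebra $E_2(R(x))$; Proposition \ref{p woR implies wR for each Peirce-2} then guarantees that $E_2(R(x))$ is a Rickart JB$^*$-algebra. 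This gives the second assertion, and shows that $I$ contains the generalized inverse and the range tripotent of each non-zero von Neumann regular element of $I$. Finally, assuming $I\neq\{0\}$, I would pick $a\in I$ with $\|a\|=1$ and apply the density just proved with $\varepsilon=\frac{1}{2}$ to obtain a von Neumann regular $y\in I$ with $\|a-y\|<\frac{1}{2}$, so that $y\neq 0$; by the previous step $R(y)=r_{{E^{**}}}(y)\in I$ is a tripotent, and it is non-zero because $y\neq 0$, exhibiting a non-zero tripotent in $I$.

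The hard part will be the single point in the density argument where one must verify that the regular approximant returned by Proposition \ref{p approximation by regular elements} actually stays inside $I$: the tripotent $e_{\varepsilon}$ need not belong to $I$, so it is essential that the approximant has the triple-product form $Q(b)(e_{\varepsilon})$ with $b\in I$, which is precisely where the inner-ideal identity $\{I,E,I\}\subseteq I$ (not merely closure under triple products of elements of $I$) is used. This is also the reason the whole statement is routed through the dense family of regular elements rather than through arbitrary elements of $I$: for a general (non-regular) $x\in I$ the range tripotent $R(x)$ is not supplied by $E_x$ and one has no direct handle on it from within $I$, whereas for regular $x$ both $R(x)$ and $x^{\dag}$ are manufactured inside $E_x\subseteq I$.
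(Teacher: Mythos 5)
Your proposal is correct and follows essentially the same route as the paper: density via Proposition \ref{p approximation by regular elements} together with $E_a\subseteq I$ and $\{b,e_{\varepsilon},b\}\in\{I,E,I\}\subseteq I$, and the remaining assertions via Propositions \ref{p Rickart thm33} and \ref{p woR implies wR for each Peirce-2} plus the fact that $x^{\dag}$ and $r_{{E^{**}}}(x)$ lie in $E_x\subseteq I$. Your explicit treatment of the non-zero tripotent (choosing $\varepsilon=\tfrac12$) and your reading of the final clause as referring to von Neumann regular elements match what the paper's own proof actually establishes.
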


\begin{proof} Let us fix $a\in I$. Proposition \ref{p approximation by regular elements} proves that we can approximate $a$ in norm by von Neumann regular elements of the form $\{b, e, b\},$ where $e\in E$ is a tripotent satisfying $e \leq R(a)$ and $b\in E_a.$ Having in mind that $I$ is an inner ideal we deduce that $E_a\subseteq I,$ and $\{b,e,b\}\in I,$ which concludes the proof of the first statement. The second statement is a consequence of Propositions \ref{p Rickart thm33} and \ref{p woR implies wR for each Peirce-2}.\smallskip

Take now $a\in I\backslash\{0\}.$ In this case $E_a\subseteq E(a).$ By the conclusion in the first paragraph, we can approximate $a$ in norm by a sequence $(a_n)_n$ of non-zero von Neumann regular elements in $I$. It follows from Proposition \ref{p Rickart thm33} that the range tripotent of each $a_n$ in $E$, $R(a_n),$ coincides with its range tripotent in $E^{**}$ and by the theory on von Neuman regular elements $a_n^{\dag}, R(a_n)\in E_{a_n} \subseteq E(a_n)\subseteq I,$ which concludes the proof. 
\end{proof}

\noindent\textbf{Acknowledgements} First and third authors are partially supported by the Spanish Ministry of Science, Innovation and Universities (MICINN) and European Regional Development Fund project no. PGC2018-093332-B-I00 and Junta de Andaluc{\'i}a grants number A-FQM-242-UGR18 and FQM375. Second author is partially supported by NSF of China (12171251). Third author is also supported by the IMAG--Mar{\'i}a de Maeztu grant CEX2020-001105-M / AEI / 10.13039 / 501100011033. The fourth author is supported by a grant from the
``Research Center of the Female Scientific and Medical Colleges'', Deanship of Scientific Research, King Saud University.\smallskip

\end{document}